\numberwithin{equation}{section} 
\theoremstyle{plain}
\newtheorem{theorem}[subsection]{Theorem}
\newtheorem{lemma}[subsection]{Lemma}
\newtheorem{definition}[subsection]{Definition}
\newtheorem{remark}[subsection]{Remark}
\newtheorem{proposition}[subsection]{Proposition}
\newtheorem{corollary}[subsection]{Corollary}
\renewcommand{\leq}{\leqslant}
\renewcommand{\geq}{\geqslant}
\newcommand{\n}{n}
 \newcommand{\m}{m}
 \newcommand{\1}{1}
\newcommand\R{\mathbb{R}}
\newcommand\N{\mathbb{N}}
\newcommand\Q{\mathbb{Q}}
\begin{document}

\title[Stable compactness]{On compactness in $L^0$-modules}

\author{Asgar Jamneshan}
\address{Department of Mathematics and Statistics, University of Konstanz}
\email{asgar.jamneshan@uni-konstanz.de}

\author{Jos\'e M.~Zapata}
\address{Departamento de Matem\'aticas, Universidad de Murcia}
\email{jmzg1@um.es}

\thanks{The authors would like to thank Michael Kupper for advice}
\thanks{Asgar Jamneshan is supported by DFG project KU-2740/2-1. Jos\'e M.~Zapata is supported by the grant MINECO MTM2014-57838-C2-1-P}

\subjclass[2010]{46H25,54D30,03C90}

\begin{abstract} 
Several results in functional analysis are extended to the setting of $L^0$-modules, where $L^0$ denotes the ring of all measurable functions $x\colon \Omega\to \mathbb{R}$. 
The focus is on results involving compactness. 
To this end, a notion of  \emph{stable compactness} is introduced, and it is argued that the conventional notion of compactness does not allow to establish a functional analytic discourse in $L^0$-modules. 
Several characterizations of stable compactness are discussed, and its importance in applications is highlighted. 

\smallskip
\noindent \textit{Key words and phrases: compactness, $L^0$-modules, conditional analysis, probabilistic analysis, measurable selections, random set theory.} 
\end{abstract}

\maketitle

\setcounter{tocdepth}{1}

\section{Introduction}

Let $(\Omega,\mathcal{F},\mathbb{P})$ be a probability space.
Let $L^0=L^0(\Omega,\mathcal{F},\mathbb{P})$ denote the space of all real-valued Borel measurable functions modulo almost everywhere identity. 
The \emph{locally $L^0$-convex topology}  on $(L^0)^d, d\geq 1$, is defined through neighbourhoods of the form $\{x\in (L^0)^d\colon \|x\|<r\text{ a.e.}\}$ where $\|x\|:=(\sum_{i=1}^d x_i^2)^{1/2}$ is an $L^0$-valued norm and $r\in L^0$ with $r>0$ a.e., see \cite{Cheridito2012,kupper03}. 
The space $(L^0)^d$ is a topological $L^0$-module of rank $d$. 
By verifying that $(L^0)^d$ is anti-compact\footnote{A topological space $X$ is called anti-compact if any compact set in $X$ is finite.} in general,  
we confirm the observation in the literature (\cite{Cheridito2012,DJKK13,guo08}) that there is no hope to extend theorems in analysis to $(L^0)^d$ with a classical notion of compactness, for example a Heine-Borel theorem. 
More generally, we prove anti-compactness for arbitrary stable topological $L^0$-modules. 
In \cite{DJKK13}, a notion of conditional compactness is introduced within the conditional set theory, and it was shown that it allows to extend results to the setting of conditional locally convex topological vector spaces. 
The interpretation of conditional compactness in $L^0$-modules is called \emph{stable compactness} which seems to be a suitable substitute of classical compactness in modules, as it permits to extend a thorough functional analytic discourse to the setting of $L^0$-modules which is illustrated in this article. 
Roughly speaking, a subset of a stable topological $L^0$-module is said to be stable compact if for every open covering $\mathcal{O}$ of $S$ there exists a measurable countable partition $(A_k)$ of $\Omega$ such that $S$ is covered by finitely many elements of $\mathcal{O}$ on each $A_k$.  
In the finite dimensional case $(L^0)^d$ characterizations of stable compactness exist in the literature, see \cite{Cheridito2012}, which are applied to solve stochastic control problems in \cite{ch11,kupper08,JKZ2017control} and to establish a Brouwer fixed point theorem in \cite{DKKM13}. 
We will discuss further equivalent characterizations of stable compactness. 
One of them builds a unique correspondence to measurable selection theory, and thus closes one gap between $L^0$-module theory and probabilistic analysis theory (see e.g.~\cite{schweizer2011probabilistic}) and random set theory (see e.g.~\cite{molchanov2005theory}). 

For an arbitrary $L^0$-module $E$, a locally $L^0$-convex topology can be defined by a family $\mathscr{P}$ of $L^0$-seminorms, see \cite{kupper03}. 
Similarly as in topological vector spaces, one introduces neighbourhoods of the form $\{x\in E\colon \sup_{p\in N} p(x)<r\}$ where $N\subset \mathscr{P}$ is finite. 
As shown in \cite{kupper03}, the locally $L^0$-convex topology allows to extend the Hahn-Banach extension and separation theorems and the Fenchel-Moreau theorem to $L^0$-modules.  
The $L^0$-scalar multiplication defines a function $\mathcal{F}\times E\to E$ given by $(A,x)\mapsto 1_A x$. 
It is said that $E$ is stable under countable pastings, or stable for short, if for every measurable countable partition $(A_k)$ of $\Omega$ and every sequence $(x_k)$ in $E$ there exists a unique $x\in E$ such that $1_{A_k} x = 1_{A_k} x_k$ for all $k$. 
We call a module topology $\mathcal{T}$ on $E$ stable whenever $E$ is stable and $\mathcal{T}$ admits a topological base consisting of a stable collection of stable subsets of $E$, see \cite{DJKK13}.  
The hyperplane separation theorem in \cite{kupper03} is proved under the assumption that the inducing family $\mathscr{P}$ of $L^0$-seminorms is closed under finite suprema and countable pastings, which is equivalent to the property that  the locally $L^0$-convex topology on $E$ induced by $\mathscr{P}$  is in fact a stable topology, see the discussion in Section \ref{s:sec2} for details.  
The abstraction of stability leads to the notion of a \emph{conditional set} in \cite{DJKK13}. 
As a result, all objects in conditional set theory satisfy stability under pastings.\footnote{Conditional set theory is formalized in the context of arbitrary complete Boolean algebras, so that pastings might be uncountable.} 
Conditional set theory provides a formalism to establish systematically conditional versions of results in linear algebra, topology, measure theory, functional analysis, etc., see  \cite{DJKK13,jamneshan13,jamneshan2017measures,martindiss,OZ2017stabil,zapata2016eberlein}. 
It follows from the definition of a conditional set that there is a close connection between classical structures and corresponding conditional structures. 
This connection is systematically studied in \cite{DJKK13}, and is reflected in all definitions, propositions and theorems there. 
In particular, a one-to-one correspondence between conditional topological base, conditional convergence, and conditional continuity and respective classical structures is established, see \cite[Section 3]{DJKK13}, and more specifically \cite[Propositions 3.5, 3.11, 3.22]{DJKK13}.  
Moreover, in the context of the associated measure algebra, an isometric isomorphism between conditional real numbers and $L^0$ is proved in \cite[Theorem 4.4]{DJKK13}, which in particular implies that it follows from the definition of a conditional vector space that its underlying classical structure is a stable $L^0$-module, see \cite{DJKK13}.  
We verify that this correspondence is an equivalence of categories, which enables to study the classical meaning of results in conditional set theory in $L^0$-modules. 
We start by studying algebraic properties in Section \ref{s:sec1}, where we argue that the conventional notion of an algebraic basis is limited to finitely ranked $L^0$-modules. 
We suggest a notion of stable basis, obtained as an interpretation of a vector space basis in conditional set theory, to work with in infinite dimensional settings. 

A different type of topology which is well studied in the context of $L^0$-modules is the \emph{$(\epsilon,\lambda)$-topology} or \emph{$L^0$-topology} which can be viewed as a generalization of the topology of convergence in probability, see  \cite{guo2009separation,haydon1991randomly}.  
Several results in functional analysis were extended to the setting of $L^0$-modules endowed with an $(\epsilon,\lambda)$-topology, see \cite{guo08,guoshi11,guo2009separation} and the references therein.
Using the conventional notion of compactness, it is proved in \cite{guo08} that the Banach-Alaoglu theorem holds in an $L^0$-module $E$ endowed with an $(\epsilon,\lambda)$-topology if and only if $E$ has an essentially atomic support.   
We prove that if $E$ is endowed with a stable topology then it is anti-compact if and only if the support of $E$ is atomless. 
However, stable compactness allows to establish an extension of the the Banach-Alaoglu theorem to $L^0$-modules  in full generality. 
Moreover, we provide, among others, an $L^0$-module version of the Heine-Borel theorem, Tychonoff's theorem, the Eberlein-\v{S}mulian theorem, and the James' compactness theorem.  
Further results in functional analysis are extended to $L^0$-modules as well, e.g.~the Bipolar theorem, the Baire category theorem, the uniform boundedness principle, Banach's fixed point theorem, and the \`{A}rzela-Ascoli theorem. 
In \cite{guo10}, connections between the locally $L^0$-convex topology and the $(\epsilon,\lambda)$-topology are established, see also \cite{guo2015random1,guo2015random2}.  
In Section \ref{s:sec2} we provide an analysis of the relations between the stable topology, the locally $L^0$-convex topology and the $(\epsilon,\lambda)$-topology, which combined with the results in \cite{guo10}, allow to derive  extensions of functional analytic results with respect to all three types of topologies in the last two sections. \\

\noindent \textbf{Related literature.}
The $L^0$-topology in a module setting is introduced in \cite[Section 5]{haydon1991randomly} to study the local behaviour of Bochner-Lebesgue spaces. 
Motivated by problems in probabilistic analysis, the  $(\epsilon,\lambda)$-topology is introduced in $L^0$-modules, see the survey in \cite{guo2015random1,guo2015random2} for a historical background, and see \cite{schweizer2011probabilistic} for an introduction to probabilistic analysis.  
The locally $L^0$-convex topology is introduced in \cite{kupper03}, motivated by problems in mathematical finance, see \cite{Cheridito2012,DKKM13,kupper04} for further results in this direction. 
The stable topology is a result of conditional set theory and the connections between classical and conditional structures in \cite{DJKK13}. 
Conditional set theory is closely related to Scott-Solovay-Vop\v{e}nka's Boolean-valued models of set theory, see \cite{bell2005set} for an introduction, and to toposes of sheaves on complete Boolean algebras, see  \cite{maclane2012sheaves} for an introduction. 
In \cite{jamneshan13,jamneshan2014topos}, the connection between conditional sets and toposes of sheaves is studied. 
The term Boolean-valued analysis refers to the application of Boolean-valued models to the standard model of ZFC, see for a thorough account the excellent monographs \cite{kusraev2012boolean,kutateladze2012nonstandard} and their extensive list of references. 
In \cite{kusraev2012boolean} the connection between vector spaces in a Boolean-valued model and modules in the standard model are investigated systematically. 
Conditional analysis is applied to stochastic control problems, dynamic risk sharing, representation of preferences, duality of conditional risk measures, equilibrium pricing, Principal-Agent problems and vector duality in  \cite{BH14,martinsamuel13,kupper08,ch11,cs12,DJ13,drapeau2017fenchel,kupper11,frittelli14,JKZ2017control,zapata16,OZ2017stabil}.

The rest of this paper is organized as follows. Section \ref{s:notation} fixes the notations and introduces the preliminaries. 
In Section \ref{s:sec1} algebraic aspects of $L^0$-modules are discussed. In Section \ref{s:sec2} the connection among the three types of $L^0$-module topologies and conditional locally convex topological vector spaces is analyzed. 
In Section \ref{s:compact} stable compactness is introduced and characterized. 
In Section \ref{s:funcana} several functional analytic results are extended to $L^0$-modules and stable metric spaces for different types of module topologies.   

\section{Preliminaries}\label{s:notation}

We denote by boldface letters objects in conditional set theory and with non-boldface ones those in classical set theory. 
Throughout we fix a probability space $(\Omega,\mathcal{F},\mathbb{P})$, unless we mention otherwise. 
Measurable sets in $\Omega$ are denoted by the upper case latin letters $A,B,C$ and in their indexed forms. 
We will always identify two functions on $\Omega$ or measurable sets in $\Omega$ if they are equal almost everywhere. 
In the latter case this identification leads to the associated measure algebra which is denoted by $\mathcal{A}=(\mathcal{A},\wedge,\vee,{}^c,1,0)$, see \cite[Chapter 31]{halmos09} for an introduction to measure algebras. 
We denote the equivalence classes in $\mathcal{A}$ by the lower case latin letters $a,b,c$ and in their indexed forms. 
If we use both notations $A$ and $a$, $B$ and $b$, etc., then it is always understood that $A$ is a representative of $a$, $B$ is a representative of $b$, etc. 
Let $p(a)$, $a\in \mathcal{A}$, denote the set of all partitions of $a$, i.e.~a family $(a_i)$ consisting of pairwise disjoint elements in $\mathcal{A}$ with $\vee a_i=a$. 
Recall the following two facts which are crucial in the following analysis. 
\begin{itemize}
\item[(i)] $\mathcal{A}$ is a complete Boolean algebra. 
\item[(ii)] $\mathcal{A}$ satisfies the countable chain condition, that is, all partitions  are at most countable. 
\end{itemize}
An important property will be a stability property w.r.t.~countable gluings which we will impose on sets, functions, sequences, modules, topologies, filters, etc. 
We name this property \emph{stable under countable concatenations}, or \emph{stable} for short, e.g.~a stable set, a stable module, a stable topology, etc. 
The essence of this stability property is a conditional set, see \cite{DJKK13} for an introduction to conditional set theory. 
In the following, basic concepts in conditional set theory are summerized.  
\begin{definition}
Let $X$ be a non-empty set. 
A collection $\mathbf{X}$ of objects $x|a$ for  $a\in\mathcal{A}$ and $x\in X$ is said to be a \emph{conditional set with carrier $X$}, if 
\begin{itemize}
\item[(i)]  $x|a=y|b$ implies $a=b$, 
\item[(ii)] \emph{Consistency}: $x|a=y|a$ and $b\leq a$ imply  $x|b=y|b$,\footnote{In every Boolean algebra, $b\leq a$ is defined by $b=b\wedge a$.}
\item[(iii)] \emph{Stability}: for all $(a_k)\in p(1)$ and every countable family $(x_k)$ in $X$ there exists a unique $x\in X$ such that $x|a_k=x_k|a_k$ for all $k$. 
We call $x$ the \emph{concatenation of $(x_k)$ along $(a_k)$}, and write the formal expression $x=\sum x_k|a_k$. 
\end{itemize}
\end{definition}

\begin{example}
Let $X$ be a non-empty set. 
We denote by $L^0_s(X)$ the set of all \emph{step functions} $x=\sum_k 1_{A_k} x_k\colon \Omega\to X$ where $(a_k)\in p(1)$ and $\sum_k 1_{A_k} x_k$ denotes the function with value $x_k$ on $A_k$ for all $k$. 
Identify on $L^0_s(X)\times\mathcal{A}$ two pairs $(x,a)$ and $(y,b)$ whenever $a=b$ and $x(\omega)=y(\omega)$ for almost all $\omega\in A$. 
Denote by $x|a$ the equivalence class of $(x,a)$. 
Then the collection $\mathbf{X_s}$ of all $x|a$, $a\in\mathcal{A}$ and $x\in L^0_s(X)$, is a conditional set with carrier $L^0_s(X)$. 
The set of step functions $L^0_s(\mathbb{N})$ is the carrier of the \emph{conditional natural numbers} $\mathbf{N_s}$.  
\end{example}

Let $\mathbf{X}$ be a conditional set with carrier $X$. 
A subset $Y\subset X$ is said to be \emph{stable}, if $Y\neq \emptyset$ and $\sum y_k|a_k\in Y$ for all $(y_k)\subset Y$ and $(a_k)\in p(1)$. 
A stable subset $Y\subset X$ induces the conditional set $\mathbf{Y}:=\{y|a\colon y\in Y, \, a\in\mathcal{A}\}$ which is called a \emph{conditional subset} of $\mathbf{X}$. 
Let $Y\subset X$ be non-empty. Then 
\[
\text{st}(Y):=\left\{\sum y_k|a_k\colon (y_k) \subset Y, \, (a_k)\in p(1)\right\}. 
\]
is a stable set which is called the stable hull of $Y$. 
For a countable family $(Y_k)$ of stable subsets of $X$ and $(a_k)\in p(1)$, define the stable set
\[
\sum Y_k|a_k:=\left\{\sum y_k|a_k\colon y_k\in Y_k \text{ for each }k\right\}. 
\]
A collection $\mathscr{Y}$ of stable subsets of $X$ is said to be a \emph{stable collection of stable subsets}, if  $\mathscr{Y}\neq \emptyset$ and $\sum Y_k|a_k\in \mathscr{Y}$ for all $(Y_k)\subset \mathscr{Y}$ and $(a_k)\in p(1)$. 
Similarly as above,  a stable collection $\mathscr{Y}$ of stable sets induces a conditional collection of conditional sets, see \cite[Section 2]{DJKK13}. 
For a non-empty collection $\mathscr{Y}$ of subsets of $X$, we form its stable hull by 
\[
\text{st}(\mathscr{Y}):=\left\{\sum \text{st}(Y_k)|a_k\colon (Y_k) \subset \mathscr{Y}, \, (a_k)\in p(1)\right\}. 
\]
Given a non-empty family $(\mathbf{X}_i)$ of conditional sets, the direct product $\prod X_i$ of the family $(X_i)$ of carriers defines in a natural way a conditional set which is called the \emph{conditional Cartesian product} of the family $(\mathbf{X}_i)$. 
Let $\mathbf{X}$ and $\mathbf{Y}$ be conditional sets. 
A relation $S\subset X\times Y$ is said to be \emph{stable}, if $(\sum x_k|a_k,\sum y_k|a_k)\in S$ whenever $(x_k, y_k)\in S$ for all $k$ and $(a_k)\in p(1)$. 
A \emph{conditional relation} $\mathbf{S}$ is the conditional set induced by a stable relation $S$. 
A function $f\colon X\to Y$ is said to be \emph{stable}, if $f(\sum x_k|a_k)=\sum f(x_k)|a_k$ for all $(x_k)\subset X$ and $(a_k)\in p(1)$. 
A \emph{conditional function}  $\mathbf{f\colon X\to Y}$ is the conditional set $\mathbf{G_f}$ induced by the graph $G_f$ of a stable function  $f\colon X\to Y$. 
A family $(x_i)=(x_i)_{i\in I}$ of elements in $X$ is said to be \emph{stable}, if it is parametrized by a stable function $I\to X$.\\

Let $(\mathbb{R},+,\cdot,\leq)$ be the totally ordered field of real numbers. 
Let $L^{0}$ denote the space of all real-valued Borel measurable functions on $(\Omega,\mathcal{F},\mathbb{P})$. 
Consider on $L^0$ the order of almost everywhere dominance. 
We will always understand equalities and inequalities between measurable functions in the almost everywhere sense. 
Further, let $L^0_+:=\{r\in L^0: r\geq 0\}$ and $L^0_{++}:=\{r\in L^0: r> 0\}$. 
Recall that $(L^{0},+,\cdot,\leq)$ is a Dedekind complete Riesz algebra. 
For a subset $X\subset L^0$, we write $\sup X=\text{ess$\,$sup} \,X$ and $\inf X= \text{ess$\,$inf} \,X$ whenever these quantities exist. 
Define on $L^0\times \mathcal{A}$ the equivalence relation $(r,a)\sim (t,b)$, if $a=b$ and $1_A r=1_A t$, and denote by $r|a$ the equivalence class of $(r,a)$. 
Then the collection $\mathbf{R}$ of all equivalence classes $r|a$ is a conditional set with carrier $L^0$, where the concatenation of $(x_k)$ along $(a_k)$ corresponds to $\sum_k 1_{A_k} x_k:=1_{A_1} x_1  + 1_{A_2} x_2  +\ldots$. 
Addition and multiplication are stable functions which define the structure of a conditional field on $\mathbf{R}$, and the order of almost everywhere dominance is a stable relation which defines a conditional total order on $\mathbf{R}$ such that $(\mathbf{R},+,\cdot,\leq)$ is a conditional totally ordered field, see \cite[Section 4]{DJKK13}. 
In this context, $L^0_+$ and $L^0_{++}$ correspond to the conditional set of positive and strictly positive conditional real numbers, respectively. 

Recall that $L^0_s(\mathbb{N})$ is the carrier of the conditional natural numbers. 
We understand $\mathbb{N}$ as a subset of $L^0_s(\mathbb{N})$ via the embedding $n\mapsto 1_\Omega n$. 
A stable family $(x_n)=(x_n)_{n\in L^0_s(\mathbb{N})}$ parametrized by a stable function $L^0_s(\mathbb{N})\to X$ is called a \emph{stable sequence}. 
For each $\n\in L^0_s(\mathbb{N})$, define the stable set 
\[
\{1\leq m\leq n\}:=\{m\in L^0_s(\mathbb{N})\colon 1\leq m\leq n\}. 
\]
A stable set $X$ is said to be \emph{stable  finite}, if there exists a stable bijection $\{1\leq m\leq n\}\to X$ for a unique $n\in L^0_s(\mathbb{N})$. 
If $X$ is  stable  finite, then there exist $(a_k)\in p(1)$, $(n_k)\subset \mathbb{N}$ and a countable family $(X_k)$ of subsets of $X$ with the cardinality of $X_k$ being equal to $n_k$ for all $k$ such that $X=\sum \text{st}(X_k)|a_k$.

\section{Stable $L^0$-modules and conditional vector spaces}\label{s:sec1}
 
In this section, we establish an equivalence of categories between the category of stable $L^0$-modules and the category of conditional real vector spaces. 
We show that a stable $L^0$-module is free if and only if it is the direct sum of finitely many copies of $L^0$. 
We construct a stable algebraic basis which can serve as a useful replacement in cases where we lack a classical basis.   
We use the upper case latin letters $E$ and $F$ to denote $L^0$-modules, and the boldface upper case latin letters $\mathbf{E}$ and $\mathbf{F}$ to denote conditional vector spaces. 

An $L^0$-module $E$ is an additive group on which the commutative ring $L^0$ acts by scalar multiplication. 
In particular, this implies that the associated measure algebra acts on $E$ by scalar multiplication with indicator functions which makes the following definition plausible. 

\begin{definition}
An $L^0$-module $E$ is said to be \emph{stable}, if for all $(x_k)\subset E$ and $(a_k)\in p(1)$ there exists a unique element $x\in E$ such that $1_{A_k}x=1_{A_k}x_k $ for all $k$. 
$x$ is called the concatenation of $(x_k)$ along $(A_k)$, and denoted formally by $x=\sum_k 1_{A_k} x_k$. 
\end{definition}

In \cite[Example 2.12]{kupper03} the existence of concatenations is not satisfied, and in \cite[Example 1.1]{zapataportfolio} their uniqueness fails. Thus an $L^0$-module is not a priori stable. 
However, we can always stabilize an $L^0$-module as follows. 

\begin{remark}
Let $E$ be an $L^0$-module. 
Let $E^s$ be the collection of all $(x_k, a_k)\subset E\times \mathcal{A}$ where $(a_k)\in p(1)$. 
Identify $(x_k,a_k)$ and $(y_k,b_k)$ whenever $1_{A_k\cap B_{h}} x_k =1_{A_k\cap B_h}y_{h}$ for all $h,k$. 
Defining $(x_k,a_k)+(y_k,b_k):=(x_k+y_h,a_k \wedge b_h)$ and $r(x_k,a_k)= (r x_k,a_k)$ provides $E^s$ with the structure of a stable $L^0$-module in which $E$ can be included via $x\mapsto (x,1)$. 
\end{remark}

\begin{definition}
A triplet $(\mathbf{E},+,\cdot)$ consisting of a conditional set and two conditional functions is said to be a \emph{conditional vector space}, if the carrier structure $(E,+,\cdot)$ is an $L^0$-module.\footnote{Suppose that $\textbf{E}$ is a conditional vector space. For $(x_k)\subset E$ and $(a_k)\in p(1)$ let $x:=\sum x_k|a_k$. Then the stability of the scalar product allows to prove that $1_{A_k}x=1_{A_k}x_k$ for each $k$. This means that $E$ is stable in the modular sense and $\sum 1_{A_k}x_k=\sum x_k|a_k$, that is, the abstract conditioning $\cdot|a$ is consistent with the modular conditioning $1_A\cdot$.}
\end{definition}
Now every stable $L^0$-module $E$ induces also a conditional vector space.  
Indeed, define on $E\times \mathcal{A}$ the equivalence relation $(x,a)\sim (y,b)$ whenever $a=b$ and $1_A x=1_B y$, and let $x|a$ denote the equivalence class of $(x,a)$. 
Then the collection $\mathbf{E}$ of all equivalence classes $x|a$ is a conditional set with carrier $E$. 
Since addition and scalar multiplication in $E$ are stable functions, $\mathbf{E}$ inherits the structure of a conditional vector space.

Recall that a function $f\colon E\to F$ is a module homomorphism, if it is $L^0$-linear. 
Observe that every module homomorphism satisfies $1_A f(1_A x) =1_A f(x) $ for all $a\in\mathcal{A}$ and $x\in E$. 
It follows that every module homomorphism is a stable function whenever $E$ and $F$ are stable modules. 
A conditional function $\mathbf{f}\colon \mathbf{E}\to\mathbf{F}$ is conditional linear, if the stable function $f\colon E\to F$ is a module homomorphism. 

Let $\textbf{CVect}_\mathbf{R}$ denote the category whose objects are conditional vector spaces and whose morphisms are conditional linear functions. 
Let $\textbf{Mod}^{\text{s}}_{L^0}$ be the category whose objects are stable $L^0$-modules and whose morphisms are module homomorphisms. 
The next equivalence readily follows from the previous considerations. 

\begin{theorem}\label{t:catequiv1}
The functor mapping a conditional vector space $\mathbf{E}$ to the $L^0$-module $E$ and a conditional linear function $\mathbf{f}$ to the module homomorphism $f$ is an equivalence of categories between $\textbf{CVect}_\mathbf{R}$  and $\textbf{Mod}^{\text{s}}_{L^0}$. 
\end{theorem}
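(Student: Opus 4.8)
The plan is to exhibit an explicit quasi-inverse functor and to check that the two composites are naturally isomorphic to the respective identities; thanks to the observations accumulated above, the only genuinely new ingredient is a dictionary between the abstract conditioning $x|a$ of a conditional vector space and the modular conditioning $1_Ax$. First I would record that the assignment $F\colon\mathbf{E}\mapsto E$, $\mathbf{f}\mapsto f$ is a functor at all: its value on objects is a stable $L^0$-module by the footnote to the definition of a conditional vector space, its value on morphisms is a module homomorphism by the definition of conditional linearity, and since conditional composition of conditional functions is computed on the underlying stable functions, $F$ preserves identities and composition.

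Next I would define the candidate quasi-inverse $G\colon\mathbf{Mod}^{\text{s}}_{L^0}\to\mathbf{CVect}_\mathbf{R}$ by the construction given immediately before the statement: $G$ sends a stable $L^0$-module $E$ to the conditional vector space $\mathbf{E}$ whose objects are the equivalence classes $x|a$ under $(x,a)\sim(y,b)$ iff $a=b$ and $1_Ax=1_By$, and sends a module homomorphism $f\colon E\to F$ (which is a stable function because $E$ and $F$ are stable) to the conditional linear map it induces. That $G$ is a functor and that $F\circ G=\mathrm{id}$ on $\mathbf{Mod}^{\text{s}}_{L^0}$ are immediate, since by construction the carrier of $G(E)$ is $E$ and the underlying stable function of $G(f)$ is $f$. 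Faithfulness of $F$ is equally formal, as a conditional function is by definition determined by the graph of its underlying stable function.

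The heart of the argument is the other composite, $G\circ F\cong\mathrm{id}$ on $\mathbf{CVect}_\mathbf{R}$, and this reduces to the key lemma that in any conditional vector space $\mathbf{E}$ one has $x|a=y|a$ if and only if $1_Ax=1_Ay$. I would prove both implications by concatenating along the partition $(a,a^c)\in p(1)$ and invoking the footnote's identity $1_{A_k}\bigl(\sum x_k|a_k\bigr)=1_{A_k}x_k$ together with uniqueness of concatenations in a stable module: for the forward direction, the concatenation $w$ of $(y,x)$ along $(a,a^c)$ satisfies $w|a=y|a=x|a$ and $w|a^c=x|a^c$, so $w=x$ by uniqueness, whence $1_Ay=1_Aw=1_Ax$; for the converse, the concatenation $z$ of $(x,y)$ along $(a,a^c)$ agrees with $y$ in modular conditioning on both $a$ and $a^c$ as soon as $1_Ax=1_Ay$, so $z=y$ and therefore $x|a=z|a=y|a$. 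This lemma says exactly that the abstract conditioning relation of $\mathbf{E}$ coincides with the modular equivalence relation defining $G(E)$, so $x|a\mapsto[(x,a)]$ is a canonical isomorphism of conditional vector spaces, natural in $\mathbf{E}$.

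I expect the correspondence lemma to be the main obstacle, since everything else is bookkeeping that has effectively been set up in the preceding discussion. The delicate point is to keep the two notions of conditioning — the purely set-theoretic $x|a$ and the algebraic $1_Ax$ — rigorously separated until the lemma identifies them, and in particular to use uniqueness of concatenations (the modular stability of $E$) rather than any a priori compatibility. Once the lemma is in hand, fullness of $F$ follows as well, because a module homomorphism $f$ between carriers is stable and, by $1_Af(x)=f(1_Ax)$, respects the conditioning identified by the lemma, hence induces a conditional linear map between the corresponding conditional vector spaces lifting $f$; this completes the equivalence.
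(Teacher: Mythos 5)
Your proposal is correct and follows essentially the same route as the paper, which states that the equivalence ``readily follows from the previous considerations'' --- namely the construction of $\mathbf{E}$ from a stable module $E$, the footnote identifying the abstract conditioning $x|a$ with the modular conditioning $1_Ax$, and the observation that module homomorphisms between stable modules are automatically stable. Your explicit verification of the key lemma ($x|a=y|a$ iff $1_Ax=1_Ay$, via uniqueness of concatenations along $(a,a^c)$) is precisely the content the paper delegates to that footnote, so you have simply written out the details the authors left implicit.
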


Since $L^0$ is a commutative ring, every free $L^0$-module has a rank, see \cite[Theorem 2.1]{hungerford2003algebra}. 
\begin{proposition}\label{p:freemodule}
Suppose there exists an infinite partition in $p(1)$ and let $E$ be a free $L^0$-module. 
Then $E$ is stable if and only if $E$ has finite rank.
\end{proposition}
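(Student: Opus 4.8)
The plan is to prove the two implications separately, treating the free module $E$ as the free module on a basis $(e_i)_{i\in I}$, that is, the set of all finite $L^0$-linear combinations of the $e_i$, where the cardinality of $I$ equals the rank of $E$ (well-defined by \cite[Theorem 2.1]{hungerford2003algebra}).

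For the implication ``finite rank $\Rightarrow$ stable'', I would first note that finite rank means $E$ is module-isomorphic to $(L^0)^d$ for some $d\in\N$. I would then check directly that $(L^0)^d$ is a stable $L^0$-module: given $(x_k)\subset (L^0)^d$ and $(a_k)\in p(1)$, one concatenates coordinatewise, using that $L^0$ itself admits unique concatenations $\sum_k 1_{A_k} r_k$; this produces a unique $x$ with $1_{A_k}x=1_{A_k}x_k$ for all $k$. Finally, since every module homomorphism is additive and satisfies $1_A f(1_A x)=1_A f(x)$, it commutes with concatenations, so a module isomorphism transports stability from $(L^0)^d$ to $E$. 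This direction is routine.

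For the converse I would argue by contraposition: assuming $E$ has infinite rank, I would exhibit a sequence admitting no concatenation, witnessing that $E$ is not stable. Using the hypothesis, fix an infinite partition $(a_k)_{k\in\N}\in p(1)$ with each $a_k\neq 0$, and pick pairwise distinct basis vectors $(e_k)_{k\in\N}$, which exist because $I$ is infinite. Suppose toward a contradiction that $(e_k)$ admits a concatenation $x\in E$, i.e.\ $1_{A_k}x=1_{A_k}e_k$ for all $k$. Writing $x$ in the basis as a \emph{finite} sum $x=\sum_{i} x_i e_i$ with $x_i\in L^0$, the defining relation becomes $\sum_i (1_{A_k}x_i) e_i = 1_{A_k}e_k$ for every $k$.

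The crucial step is to invoke the $L^0$-linear independence of the basis, i.e.\ the uniqueness of coordinates in a free module: comparing the coefficient of $e_k$ on both sides forces $1_{A_k}x_k=1_{A_k}$, hence $x_k=1$ almost everywhere on $A_k$, so $x_k\neq 0$ for every $k\in\N$. This contradicts the finiteness of the support of $x$. I expect this coefficient comparison to be the heart of the argument and the only place where freeness (as opposed to mere torsion-freeness or stability) is genuinely used: it is precisely where the finite-support constraint intrinsic to free modules collides with the countably infinite data supplied by the partition. The assumed existence of an infinite partition is exactly what furnishes infinitely many nonzero ``slots'' $a_k$, and thereby rules out finite rank.
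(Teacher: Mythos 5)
Your proposal is correct and follows essentially the same route as the paper: the forward direction rests on coordinatewise concatenation with respect to a finite basis (you merely route it through the isomorphism $E\cong (L^0)^d$ rather than working in $E$ directly), and the converse derives a contradiction between the finite support of the basis representation of the concatenation $\sum_k 1_{A_k}e_k$ and the $L^0$-linear independence of infinitely many basis vectors. The paper's version of the latter step fixes a single basis index outside the finite support and concludes $1_{A_k}=0$, whereas you show all countably many coefficients are nonzero; these are equivalent uses of the same coefficient comparison.
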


\begin{proof}
If $E$ has finite rank, then $E$ is finitely generated, say by $\{v_1,\ldots, v_n\}$. 
Let $(a_k)\in p(1)$ and $(x_k)\subset E$ . 
Each $x_k$ is of the form $r_1^k v_1+\ldots +r_n^k v_n$ for some $r_m^k\in L^0$, $m=1,\ldots, n$. 
Put $r_m:=\sum_k 1_{A_k} r_m^k$ for $m=1,\ldots,n$ and $x:=r_1 v_1+\ldots +r_n v_n$. 
Then $1_{A_k} x =1_{A_k} x_k $ for all $k$. 
Now, let $y\in E$ be satisfying $1_{A_k} y =1_{A_k} x_k $ for all $k$. 
If we prove that $x=y$, then we conclude that $E$ is stable. 
Indeed, $y$ is of the form $s_1 v_1+\ldots +s_n v_n$ for some $s_m\in L^0$, $m=1,\ldots, n$. 
Then, for each $k$, $0=1_{A_k}(x-y)=1_{A_k}(r_1-s_1)v_1+\ldots+1_{A_k}(r_n-s_n)v_n$. This means that $1_{A_k}(r_m-s_m)=0$ for all $k,m$, and thus $r_m=s_m$ for each $m$. We conclude that $x=y$.

Conversely, suppose that $E$ is stable. 
By contradiction, assume that $E$ has not finite rank. 
Since $E$ is free, $E$ has a basis $(x_i)_{i\in I}$ of determined length.  
Let $(a_k)\in p(1)$ be an infinite partition, $(i_k)$ be an injective infinite subfamily of $I$ and $x:=\sum_k 1_{A_k} x_{i_k}$. 
Then there exists a finite collection $J\subset I$ and $(r_i)_{i\in J}\subset L^0$ such that $x=\sum_{i\in J} r_i x_i$. 
Since $J$ is finite, there exists $i_k\in I\setminus J$ for some $k$. 
One has 
\[
0=1_{A_k} x  - 1_{A_k} x=\sum_{i\in J}  r_{i} 1_{A_k} x_{i}  - 1_{A_k} x_{i_k}. 
\]
Then necessarily $1_{A_k}=0$, but this contradicts $\mathbb{P}(A_k)>0$.   
\end{proof}
This motivates the following definition. 
\begin{definition}\label{d:basis}
Let $\mathbf{E}$ be a conditional vector space, 
$(x_{\m})_{\1\leq \m\leq \n}$ a  stable  finite family in $E$ and $(s_\m)_{\1\leq \m\leq \n}$ a  stable  finite family in $L^0$. 
A \emph{stable linear combination} is defined as\footnote{In the definition of a  stable  finite sum, we use the fact that every  stable  finite family can be written as the concatenation of the stable hulls of finite subfamilies along $(a_k)$.} 
\[
\sum_{\mathfrak{1}\leq \m\leq \n} r_\m x_{\m}:=\sum (\sum_{m=1}^{n_k} r_m x_m)|a_k, 
\]
where $\n=\sum_k 1_{A_k} n_k$.
The \emph{stable linear span} $\text{st-span}(S)$ of a non-empty subset $S\subset E$ is the stable hull of all stable linear combinations of the elements in $S$. 

A stable set $S\subset E$ is said to be a \emph{stable basis}, if the following conditions are satisfied: 
\begin{itemize}
\item[(i)] \emph{ stable  linear independence}: If $(x_{\m})_{\1\leq \m\leq \n}$  is a  stable  finite subset of $S$ and $(r_\m)_{\1\leq \m\leq \n}$ is a  stable  finite family in $L^0$, then 
\[
\sum_{\1\leq \m\leq \n} r_\m x_{\m}=0 \quad \text{implies} \quad r_\m=0  \quad \text{for all} \quad \1\leq \m\leq \n. 
\]
\item[(ii)] \emph{ stable  generating}: for every $x\in E$ there exist a  stable  finite family $(x_{\m})_{1\leq \m\leq \n}$ in $S$ and a stable finite family $(r_\m)_{\1\leq \m\leq \n}$ in $L^0$ such that 
\[
x=\sum_{\1\leq \m\leq \n} r_\m x_{\m}. 
\]
\end{itemize}
\end{definition}

\begin{theorem}\label{t:basis}
Every conditional vector space has a stable basis. 
\end{theorem}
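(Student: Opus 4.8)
The plan is to deduce the statement from the observation that the stable notions of Definition \ref{d:basis} are precisely the classical shadows of the corresponding conditional notions, and then to invoke the correspondence between conditional set theory and classical set theory developed in \cite{DJKK13}. Since ``every vector space over a field possesses a basis'' is a theorem of ZFC (proved via Zorn's lemma), and since by Theorem \ref{t:catequiv1} the conditional vector space $\mathbf{E}$ over the conditional field $\mathbf{R}$ is the conditional incarnation of the stable $L^0$-module $E$, the conditional counterpart of this theorem should yield a conditional basis $\mathbf{S}$ of $\mathbf{E}$. The remaining task would then reduce to unwinding the dictionary: a conditional basis is induced by a stable set $S\subset E$; conditional linear independence of $\mathbf{S}$ translates into condition (i), conditional generation into condition (ii), conditional finiteness of the index sets becomes ``stable finite'', and conditional linear combinations become the stable linear combinations of Definition \ref{d:basis}. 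Once this dictionary is verified, $S$ is a stable basis.

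If instead one prefers to argue directly inside $E$, I would mimic the classical Zorn argument. Let $\mathfrak{L}$ be the collection of all stable, stably linearly independent subsets of $E$, partially ordered by inclusion (with the empty basis reserved for the trivial module, to be handled separately). For a chain $(S_\alpha)$ in $\mathfrak{L}$ I would take the stable hull $\text{st}(\bigcup_\alpha S_\alpha)$ as an upper bound: because stable linear independence is tested only on stable finite families, and because $\mathcal{A}$ satisfies the countable chain condition, any such family can, after passing to a common refinement $(a_k)\in p(1)$, be reduced on each $a_k$ to a genuinely finite combination lying in a single $S_\alpha$, whence independence is inherited piece by piece and glued back. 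Zorn's lemma would then furnish a maximal element $S$, and I would finally show $S$ is stably generating: if some $x\in E$ failed to lie in $\text{st-span}(S)$, then on the nonzero part of $\Omega$ where $x$ is independent of $S$ one could enlarge $S$, after concatenating suitably along this part and its complement, to a strictly larger member of $\mathfrak{L}$, contradicting maximality.

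The main obstacle is the interplay between stability and the \emph{varying conditional rank}: a stable basis must encode the phenomenon that the effective ``dimension'' may differ on different parts of $\Omega$, so that basis elements carry supports and the stable finite subsets in condition (i) are indexed by some $n\in L^0_s(\mathbb{N})$ that need not be constant. The delicate points in the direct approach are therefore (a) verifying that the stable hull of a stably independent family remains stably independent, which requires careful bookkeeping with the stable injectivity of the enumerations together with a common refinement argument via the countable chain condition, and (b) the extension step, where one must produce a new basis element of the correct support and glue it consistently across the partition without destroying independence on the complementary part. It is exactly these gluing issues that the transfer approach bypasses, since inside the conditional universe the construction is nothing but the ordinary basis theorem and stability is automatic; for this reason I would present the transfer argument as the primary proof and regard the direct Zorn construction as its concrete unfolding.
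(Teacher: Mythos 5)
Your fallback ``direct Zorn argument'' is essentially the paper's proof: the paper takes the collection $\mathscr{H}$ of all stably linearly independent subsets of $E$, orders it by inclusion, uses the stable hull $\text{st}(\cup S_i)$ of a chain as an upper bound, extracts a maximal element $S_\ast$ by Zorn's lemma, and then shows $S_\ast$ is stably generating by contradiction: it forms the condition $a_\ast$ on which some element escapes $\text{st-span}(S_\ast)$ locally, attains $a_\ast$ by a single witness $x_\ast$ via an exhaustion argument using stability, normalizes to $a_\ast=1$, and adjoins $x_\ast$ to contradict maximality. This is exactly the scheme you describe, including the two delicate points (a) and (b) you isolate (the stable hull of a stably independent family remaining independent via the countable chain condition, and the support-respecting extension step).

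Your \emph{primary} route, the transfer argument, has a genuine gap as stated: the statement to be proved \emph{is} the conditional counterpart of the classical basis theorem, so appealing to ``the conditional counterpart of this theorem'' presupposes the conclusion. Theorem \ref{t:catequiv1} is an equivalence between conditional vector spaces and stable $L^0$-modules; it is not a transfer principle carrying arbitrary ZFC theorems into the conditional universe, and \cite{DJKK13} does not supply such a metatheorem either --- conditional analogues of classical results are established there one at a time, by arguments of precisely the kind in your second paragraph. (A genuine transfer principle exists for Boolean-valued models of ZFC, but the paper neither sets up nor invokes that machinery.) So the direct Zorn construction must carry the full weight of the proof; it does, and it coincides with the paper's argument, so you should present it as the proof rather than as a ``concrete unfolding'' of the transfer heuristic.
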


\begin{proof}
Let $\mathbf{E}$ be a conditional vector space. 
Let $\mathscr{H}$ be the collection of all  stable  linearly independent sets in $E$. 
Then $\mathscr{H}$ is non-empty  since it contains every one-element family. 
Order $\mathscr{H}$ by inclusion. 
Let $(S_i)$ be a chain in $\mathscr{H}$. 
Then $\text{st}(\cup S_i)$ is a member of $\mathscr{H}$. 
By Zorn's lemma, $\mathscr{H}$ has a maximal element which we denote by $S_\ast$. 
By contradiction, suppose $\text{st-span}(S_\ast)\neq E$. 
In other words, 
\[
0<a_\ast:=\vee\{a\in\mathcal{A}\colon \text{ there is } x\in E \text{ such that } 1_Bx\not\in 1_B \text{st-span}(S_\ast) \text{ for all } 0<b\leq a\}. 
\]
By stability of $S_\ast$ and $E$, $a_\ast$ is attained by some $x_\ast\in E$ due to an exhaustion argument. 
Assume w.l.o.g.~that $a_\ast=1$. 
Then $S_{\ast\ast}:=\text{st-span}(S_\ast \cup\{x_\ast\})\in \mathscr{H}$ and $S_\ast \subsetneq S_{\ast\ast}$ which contradicts the maximality of $S_\ast$. 
\end{proof}

One can adapt Definition \ref{d:basis} to provide a definition of a stable basis for stable $L^0$-modules. 
It then follows readily from Proposition \ref{t:catequiv1}: 

\begin{corollary}
Every stable $L^0$-module has a stable basis. 
\end{corollary}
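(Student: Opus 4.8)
The final statement is:

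\begin{corollary}
Every stable $L^0$-module has a stable basis.
\end{corollary}

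The plan is to deduce this directly from Theorem \ref{t:basis} together with the equivalence of categories established in Theorem \ref{t:catequiv1}. The key observation is that a stable basis is a purely algebraic/stable notion — it is defined in terms of stable linear combinations, stable finite families, and the concatenation operation — all of which are preserved under the correspondence between stable $L^0$-modules and conditional vector spaces. So the entire content should transfer across the functor essentially verbatim.

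First I would fix a stable $L^0$-module $E$ and pass to its associated conditional vector space $\mathbf{E}$, which is constructed explicitly in the discussion following Definition \ref{t:catequiv1} (the collection of equivalence classes $x|a$ with carrier $E$). By Theorem \ref{t:basis}, $\mathbf{E}$ possesses a stable basis $S\subset E$ in the sense of Definition \ref{d:basis}. Since the carrier of $\mathbf{E}$ is exactly $E$, the set $S$ is already a subset of $E$, so no translation of the underlying elements is needed; what must be checked is that the defining properties (i) stable linear independence and (ii) stable generating are the same conditions whether interpreted in $\mathbf{E}$ or directly in the stable module $E$.

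The main step is therefore to verify that the notions of stable linear combination, stable finite family, and the two basis axioms coincide under the identification of $\cdot|a$ with the modular conditioning $1_A\cdot$. Here I would invoke the footnote attached to the definition of a conditional vector space, which records precisely that $1_{A_k}x = 1_{A_k}x_k$ when $x = \sum x_k|a_k$, so that abstract concatenation in $\mathbf{E}$ agrees with the modular concatenation $\sum_k 1_{A_k}x_k$ in $E$. Consequently a stable linear combination $\sum_{1\leq m\leq n} r_m x_m$ computed in $\mathbf{E}$ equals the corresponding concatenation of finite module-linear combinations in $E$, and the relation $\sum_{1\leq m\leq n} r_m x_m = 0$ has the same meaning in both settings; likewise $r_m = 0$ for all $1\leq m\leq n$ translates faithfully. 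Thus $S$ satisfies (i) and (ii) as a subset of the stable module $E$, which is exactly the adapted Definition \ref{d:basis} for stable $L^0$-modules.

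I do not anticipate a serious obstacle here, since the corollary is a formal consequence of an equivalence of categories together with the observation that ``having a stable basis'' is an invariant of that equivalence. The only point requiring care — and the part I would state explicitly rather than leave implicit — is the consistency between abstract conditioning and modular conditioning, guaranteed by the footnote to the conditional-vector-space definition; with that in hand the transfer of each clause of Definition \ref{d:basis} is routine, and the result follows from Theorem \ref{t:basis} and Theorem \ref{t:catequiv1}.
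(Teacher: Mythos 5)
Your proposal is correct and follows exactly the paper's route: the corollary is obtained by applying Theorem \ref{t:basis} to the conditional vector space associated with the stable $L^0$-module and transferring the basis axioms back through the equivalence of categories in Theorem \ref{t:catequiv1}. The only difference is that you spell out the consistency of abstract and modular conditioning explicitly, which the paper leaves to the footnote and the phrase ``follows readily.''
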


\begin{example}
Let $\n\in L^0_s(\mathbb{N})$ and $(L^0)^\n$ denote the set of all the stable functions $\{\1\leq \m\leq \n\} \to L^0$.  
Then $\{1_{\{\m=\n\}} \colon \1\leq \m\leq \n\}$ is a stable basis of $(L^0)^\n$. 
\end{example}

\begin{definition}
An $L^0$-module $E$ is said to be \emph{finitely generated} if it has a  stable  finite subset $S$ which  stably  generates $E$. 
\end{definition}

The following result is an extension of \cite[Theorem 2.8]{Cheridito2012}.

\begin{corollary}
Let $E$ be a  stable  finitely generated $L^0$-module. 
Then $E$ admits a  stable  finite basis. 
Moreover, there exist unique $(a_k)\in p(1)$ and $(n_k)\subset \mathbb{N}$ such that $1_{A_k}E$ is a free $1_{A_k}L^0$-module of rank $n_k$. 
\end{corollary}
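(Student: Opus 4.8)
The plan is to obtain the basis from the preceding Corollary and then upgrade it to a \emph{stable finite} one. By that Corollary, $E$ carries a stable basis $S_*$. To see that $S_*$ is stable finite I would exploit that the stable finite set $S$ stably generates $E$: passing through the equivalence of categories of Theorem~\ref{t:catequiv1} to the conditional vector space $\mathbf{E}$, the family $S$ becomes a conditionally finite spanning family while $S_*$ becomes a conditionally linearly independent family, and the conditional transfer of the classical fact that an independent family cannot exceed a spanning one forces the conditional cardinality of $S_*$ to be a conditional natural number $\mathbf{n}\in\mathbf{N_s}$ bounded by that of $S$. Reading this back in $E$, the set $S_*$ is stable finite, which already proves the first assertion.

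Once $S_*$ is stable finite, the structure of stable finite sets recalled in Section~\ref{s:notation} yields $(a_k)\in p(1)$, a sequence $(n_k)\subset\mathbb{N}$ and finite subfamilies $B_k\subset S_*$ with $|B_k|=n_k$ such that $S_*=\sum \text{st}(B_k)|a_k$. I would then verify that on each piece the ordinary family $B_k$ is a $1_{A_k}L^0$-basis of $1_{A_k}E$. The point is that the stability-level notions localize: the stable span of $S_*$ restricted to $A_k$ is exactly the ordinary $1_{A_k}L^0$-span of $B_k$ (a concatenation of elements of $B_k$ restricts on $A_k$ to a $1_{A_k}L^0$-linear combination of them), so stable generation of $S_*$ gives generation of $1_{A_k}E$ by $B_k$; and a vanishing $1_{A_k}L^0$-combination of $B_k$ lifts to a stable linear combination supported on $a_k$, whose coefficients vanish by the stable linear independence of $S_*$. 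Hence $1_{A_k}E$ is free over $1_{A_k}L^0$ of rank $n_k$.

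For uniqueness, suppose $(b_j)\in p(1)$ and $(m_j)\subset\mathbb{N}$ give another such decomposition. On $a_k\wedge b_j$, whenever it is nonzero, the module $1_{A_k\cap B_j}E$ is simultaneously free of rank $n_k$ and of rank $m_j$ over the commutative ring $1_{A_k\cap B_j}L^0$; since free modules over a commutative ring have a well-defined rank (\cite[Theorem~2.1]{hungerford2003algebra}), this forces $n_k=m_j$ on $a_k\wedge b_j$. Consequently the step functions $\sum 1_{A_k}n_k$ and $\sum 1_{B_j}m_j$ coincide, which is precisely the uniqueness of the associated conditional dimension $\mathbf{n}\in\mathbf{N_s}$; the partition and the ranks are thereby determined up to the obvious regrouping. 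Alternatively, in place of the basis-theoretic argument one may decompose $S=\sum \text{st}(S_k)|a_k$ and apply \cite[Theorem~2.8]{Cheridito2012} to each finitely generated $1_{A_k}L^0$-module $1_{A_k}E$, pasting the resulting local bases together by stability.

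The step I expect to be the main obstacle is the finiteness claim for $S_*$, that is, the conditional Steinitz bound. The subtlety is that the relevant ``cardinality'' is a conditional natural number that may genuinely vary across $\Omega$, so the classical exchange/pigeonhole argument must be carried out conditionally rather than pointwise; this is exactly where the equivalence of categories and the transfer principle of conditional set theory do the essential work. A secondary care point, used both for the freeness statement and for invoking the well-definedness of rank in the uniqueness argument, is the verification that stable independence and stable generation localize correctly to the ordinary module-theoretic notions over each $1_{A_k}L^0$.
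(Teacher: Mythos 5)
The paper states this corollary without a written proof (it is flagged only as ``an extension of \cite[Theorem 2.8]{Cheridito2012}'' following the stable-basis corollary), so there is no line-by-line comparison to make; judged on its own terms, your proposal is correct in architecture, and the localization and uniqueness parts are solid. In particular, the verification that stable generation and stable independence restrict on each $A_k$ to ordinary generation and independence of $1_{A_k}B_k$ over $1_{A_k}L^0$ works exactly as you describe (a stable linear combination, after refining the partition so that the stable length is constant, becomes an ordinary $1_{A_k}L^0$-combination of the finitely many elements of $B_k$; conversely a vanishing local combination extends by zero to a stable one), and the uniqueness of $\sum 1_{A_k}n_k$ via the invariant basis number property of the commutative rings $1_{A_k\cap B_j}L^0$ is the right argument.

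The one step you should not treat as free is the ``conditional Steinitz bound.'' The framework of \cite{DJKK13} is not a Boolean-valued model of full ZFC and does not come with a wholesale transfer principle, so ``the conditional transfer of the classical fact that an independent family cannot exceed a spanning one'' is not a citable black box: you would have to actually run a conditional exchange argument, where at each step the coefficient that is ``nonzero'' is only nonzero on a condition and one must partition and exhaust accordingly. That is doable but is real work, and as written it is a gap in your primary route. The alternative you mention in your last sentence --- write $S=\sum \text{st}(S_k)|a_k$ with $S_k$ finite, observe that $1_{A_k}E$ is then an ordinarily finitely generated $1_{A_k}L^0$-module, apply \cite[Theorem 2.8]{Cheridito2012} on each piece, refine the partition, and paste the local free bases into a stable finite basis by stability --- avoids the Steinitz issue entirely and is almost certainly the route the paper intends, given that it explicitly presents the corollary as an extension of that theorem. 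I would promote that alternative to the main argument and keep the exchange-lemma discussion only if you are prepared to prove it conditionally.
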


In the setting of the previous corollary, we call $\n=\sum 1_{A_k} n_k$ the \emph{stable dimension} of $E$. 
As in the classical case, we have the following characterization. 

\begin{corollary}
If $E$ is  stable  finitely generated, then there exist $\n\in L^0_s(\mathbb{N})$ and a module isomorphism $E \to (L^0)^\n$. 
\end{corollary}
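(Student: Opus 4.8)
The plan is to turn the stable finite basis supplied by the previous corollary into a coordinate isomorphism onto $(L^0)^\n$. First I would invoke that corollary to obtain a stable finite basis $S$ of $E$; since $S$ is stable finite there is a stable bijection $\{\1\leq \m\leq \n\}\to S$, $\m\mapsto x_\m$, where $\n=\sum_k 1_{A_k}n_k$ is the stable dimension of $E$. Next I would define the coordinate map $\phi\colon E\to (L^0)^\n$ by sending $x\in E$ to its coordinate family: stable generating yields a stable finite family $(r_\m)_{\1\leq \m\leq \n}$ in $L^0$ with $x=\sum_{\1\leq\m\leq\n} r_\m x_\m$, stable linear independence forces this family to be unique, and I would set $\phi(x)$ to be the stable function $\m\mapsto r_\m$, which is by definition an element of $(L^0)^\n$.

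With $\phi$ in hand, the remaining work rests on the uniqueness just isolated. To see that $\phi$ is a module homomorphism, note that for $x=\sum_{\1\leq\m\leq\n} r_\m x_\m$, $y=\sum_{\1\leq\m\leq\n} s_\m x_\m$ and $r\in L^0$, the element $rx+y$ has stable linear combination $\sum_{\1\leq\m\leq\n}(r r_\m+s_\m)x_\m$, so uniqueness gives $\phi(rx+y)=r\phi(x)+\phi(y)$ with the right-hand side computed using the module operations of $(L^0)^\n$. Injectivity follows from the same source: $\phi(x)=0$ forces every coordinate $r_\m$ to vanish, whence $x=0$. Surjectivity is immediate, since any $(r_\m)_{\1\leq\m\leq\n}\in (L^0)^\n$ is the image of $\sum_{\1\leq\m\leq\n} r_\m x_\m\in E$. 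This exhibits $\phi$ as the desired isomorphism.

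The hard part will be the bookkeeping forced by the conditional indexing rather than any deep idea: I would need to check that $x\mapsto(r_\m)_{\1\leq\m\leq\n}$ really defines a stable function into $(L^0)^\n$, i.e.\ that it commutes with concatenations along partitions $(a_k)\in p(1)$, and that the stable linear combination of $rx+y$ genuinely has the claimed coefficientwise coordinates. Both points reduce, via the fact recorded in Definition \ref{d:basis} that a stable finite sum is the concatenation along $(a_k)$ of ordinary finite sums, to the classical statement that elements of a free module of rank $n_k$ have unique coordinates; stability then glues these local identities into the global one. An equivalent route, which I would keep as a fallback, is to work directly from the decomposition in the previous corollary: on each $A_k$ the module $1_{A_k}E$ is free of rank $n_k$, hence isomorphic to $(1_{A_k}L^0)^{n_k}$, and concatenating these isomorphisms along $(a_k)$ yields $E\to(L^0)^\n$.
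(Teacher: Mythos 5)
Your argument is correct and is exactly the coordinate-map proof the paper alludes to with ``as in the classical case'': take the stable finite basis from the preceding corollary, use stable generation plus stable linear independence to get unique coordinates, and check linearity and bijectivity from that uniqueness. The stability bookkeeping you flag does reduce, as you say, to the classical uniqueness of coordinates on each $A_k$ glued along the partition, which is also the content of the paper's preceding corollary, so nothing further is needed.
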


One can apply Proposition \ref{t:catequiv1} to derive the Hahn-Banach extension theorem\footnote{The Hahn-Banach extension theorem can be proved for modules without the stability assumption, see the discussion and the references in \cite{kupper03}.} for stable $L^0$-modules, see e.g.~\cite[Theorem 2.14]{kupper03}, from the conditional Hahn-Banach extension theorem \cite[Theorem 5.3]{DJKK13}. 
Recall that a function $f\colon E\to L^0$ is said to be \emph{$L^0$-convex}, if $f(r x + (1-r)y)\leq rf(x)+(1-r)f(y)$ for all $r\in L^0$ with $0\leq r \leq 1$ and every $x,y\in E$. 

\begin{theorem}\label{t:extension}
Let $E$ be a stable $L^0$-module. 
Let $p\colon E\rightarrow L^0$ be $L^0$-convex, $F\subset E$ a sub-module and  $f\colon F\rightarrow L^0$ $L^0$-linear and such that $f(x)\leq p(x)$ for all $x\in F$. 
Then there exists an $L^0$-linear function $\hat{f}\colon E\rightarrow L^0$ such that $\hat{f}|_F=f$ and $\hat{f}(x)\leq p(x)$ for all $x\in E$. 
\end{theorem}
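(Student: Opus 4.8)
The plan is to transport the statement through the categorical equivalence of Theorem~\ref{t:catequiv1} and then to invoke the conditional Hahn--Banach extension theorem \cite[Theorem 5.3]{DJKK13}. Under this equivalence the stable $L^0$-module $E$ corresponds to a conditional vector space $\mathbf{E}$, module homomorphisms into $L^0$ correspond to conditional linear functionals $\mathbf{E}\to\mathbf{R}$, and the almost-everywhere order on $L^0$ corresponds to the conditional order on $\mathbf{R}$ via the identification $\mathbf{R}\cong L^0$ of \cite[Theorem 4.4]{DJKK13}. The conditional theorem then produces the desired extension, and reading it back through the equivalence yields $\hat f$. What has to be checked by hand is precisely the dictionary for the three data $p$, $F$, $f$: that $p$ defines a conditional \emph{convex} functional, that $F$ may be taken to be a conditional subspace, and that the domination $f\leq p$ becomes a conditional domination.

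First I would verify that the hypotheses automatically carry the requisite stability. Since $f$ is $L^0$-linear it satisfies $1_A f(1_A x)=1_A f(x)$, hence is a stable function and thus a conditional linear functional $\mathbf{f}$ as in Section~\ref{s:sec1}. For $p$, the key observation is that every $L^0$-convex $p\colon E\to L^0$ has the \emph{local property}: testing $L^0$-convexity with the scalar $r=1_A$ on $z:=1_Ax+1_{A^c}y$ gives $1_A p(z)\leq 1_A p(x)$ and $1_{A^c}p(z)\leq 1_{A^c}p(y)$, while applying the same inequality to the decompositions $x=1_A z+1_{A^c}x$ and $y=1_{A^c}z+1_A y$ (which are legitimate since $1_Az=1_Ax$ and $1_{A^c}z=1_{A^c}y$) yields the reverse inequalities; combining them gives $p(1_A x+1_{A^c}y)=1_A p(x)+1_{A^c}p(y)$. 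By the countable chain condition this upgrades to $p(\sum 1_{A_k}x_k)=\sum 1_{A_k}p(x_k)$, so $p$ is a stable function and hence a conditional map $\mathbf{p}\colon\mathbf{E}\to\mathbf{R}$; moreover $L^0$-convexity is exactly conditional convexity once the conditional scalars $\mathbf{0}\leq\mathbf{r}\leq\mathbf{1}$ are matched with $r\in L^0$, $0\leq r\leq 1$.

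The submodule $F$ need not be stable, so I would replace it by its stable hull and extend $f$ accordingly: for $y=\sum 1_{A_k}y_k$ with $y_k\in F$ put $\tilde f(y):=\sum 1_{A_k}f(y_k)$, which is well defined and $L^0$-linear by locality of $f$, and still satisfies $\tilde f\leq p$ because $1_{A_k}\tilde f(y)=1_{A_k}f(y_k)\leq 1_{A_k}p(y_k)=1_{A_k}p(y)$ using the local property of $p$ together with $1_{A_k}y=1_{A_k}y_k$. After this reduction $\mathrm{st}(F)$ is a stable submodule corresponding to a conditional subspace $\mathbf{F}\subset\mathbf{E}$, the functional $\tilde f$ restricts to $f$ on $F$, and the domination becomes the conditional inequality $\mathbf{f}\leq\mathbf{p}$ on $\mathbf{F}$. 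Applying \cite[Theorem 5.3]{DJKK13} then gives a conditional linear $\hat{\mathbf{f}}\colon\mathbf{E}\to\mathbf{R}$ extending $\mathbf{f}$ with $\hat{\mathbf{f}}\leq\mathbf{p}$, and transporting back through Theorem~\ref{t:catequiv1} yields a module homomorphism $\hat f\colon E\to L^0$ with $\hat f|_F=f$ and $\hat f(x)\leq p(x)$ for all $x\in E$, as asserted. The main obstacle is not the conditional theorem itself but the bookkeeping of this dictionary: one must be confident that conditional convexity of $\mathbf{p}$ is genuinely equivalent to $L^0$-convexity of $p$ and that conditional linearity of $\hat{\mathbf{f}}$ is genuinely $L^0$-linearity, i.e.\ that the conditional arithmetic and order on $\mathbf{R}$ restrict to the ordinary operations and the a.e.\ order on $L^0$, which is exactly where \cite[Theorem 4.4]{DJKK13} does the decisive work.
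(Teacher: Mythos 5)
Your proposal is correct and follows exactly the route the paper itself indicates: the paper gives no written proof, stating only that Theorem~\ref{t:extension} is derived from the conditional Hahn--Banach extension theorem \cite[Theorem 5.3]{DJKK13} via the equivalence of Theorem~\ref{t:catequiv1}. Your write-up supplies precisely the dictionary the paper leaves implicit (stability of $p$ via the local property of $L^0$-convex functions, stability of $f$, and the passage to the stable hull of $F$), and these verifications are sound.
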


\section{Topological $L^0$-modules and conditional topological vector spaces}\label{s:sec2}

We know of three types of module topologies on $L^0$-module. 
The first kind of topology is the $(\epsilon,\lambda)$-topolog which is introduced in \cite{haydon1991randomly} in the case of a single $L^0$-norm and for families of $L^0$-seminorms, see \cite{guo2009separation}. 
The second type of topology are locally $L^0$-convex topologies introduced in \cite{kupper03}. 
The last type of topology which we shall call stable topologies are intimately connected to conditional linear topologies introduced in \cite{DJKK13}. 
In this section, we focus on the relations among these three kind of topologies and their connection with conditional topological vector spaces which will be the foundation for the results in the following sections.  
Unless we mention otherwise, an $L^0$-module is stable. 
\begin{definition}
Let $E$ be an $L^0$-module. 
A function $p\colon E\to L^0_+$ is said to be an \emph{$L^0$-seminorm}, if 
\begin{itemize}
\item[(i)] $p(rx)=|r|p(x)$ for all $r\in L^0$, $x\in E$, 
\item[(ii)] $p(x+y)\leq p(x)+p(y)$ for all $x,y\in E$. 
\end{itemize}
In addition, if $p(x)=0$ implies $x=0$, then $p$ is said to be an \emph{$L^0$-norm}. 

Let $\mathbf{E}$ be a conditional vector space. 
A conditional function $\mathbf{p\colon E\to R_+}$ is said to be a \emph{conditional seminorm}, if the carrier function $p\colon E\to L^0_+$ is an $L^0$-seminorm.\footnote{Notice that (i) implies that $p$ is stable.} 
A conditional seminorm is a \emph{conditional norm}, if the carrier function is an $L^0$-norm. 
\end{definition}

We recall the definition of the $(\epsilon,\lambda)$-topology induced by a family of $L^0$-seminorms. 

\begin{definition}
Let $E$ be an $L^0$-module and $\mathscr{P}$ a non-empty collection of $L^0$-seminorms. 
For $x\in E$, let $\mathscr{U}_{\epsilon,\lambda}(x)$ denote the set of all 
\[
U_{x,N,\epsilon,\lambda}:=\{ y\in E \colon \mathbb{P}(\sup_{p\in N} p(x-y)<\epsilon )>1-\lambda\}, 
\]
where $\epsilon,\lambda\in\R_+$ with $\lambda<1$ and $N\subset \mathscr{P}$ is finite. 
The topology on $E$ generated by the base $\cup_{x\in X} \mathscr{U}_{\epsilon,\lambda}(x)$ is denoted by $\mathscr{T}_{\epsilon,\lambda}=\mathscr{T}_{\epsilon,\lambda}(\mathscr{P})$. 
\end{definition}

Notice that a  neighbourhood  $U_{x,N,\epsilon,\lambda}$ is not necessarily a stable set, nor is $\mathscr{U}_{\epsilon,\lambda}(x)$ a stable collection. 
The absolute value $|\cdot|\colon L^0\to L^0_+$ induces the topology of convergence in probability which is a linear as well as a module topology on $L^0$. 
More generally, given an $L^0$-module $E$ and any non-empty collection $\mathscr{P}$ of $L^0$-seminorms on $E$, the set $\mathscr{U}_{\epsilon,\lambda}(0)$ is a  neighbourhood  base of $0$ for a module topology where we assume that $L^0$ is endowed with the topology of convergence in probability. Notice that an $(\epsilon,\lambda)$-topology is always a linear topology as well, for more details see \cite[Proposition 2.6]{guo10}. 

Let $(p_k)$ be a countable family of $L^0$-seminorms  and $(a_k)\in p(1)$. 
Then $x\mapsto \sum_k 1_{A_k} p_k(x)$ is an $L^0$-seminorm. 
A collection $\mathscr{P}$ of $L^0$-seminorms is stable, if $\mathscr{P}\neq \emptyset$ and $\sum_k 1_{A_k} p_k\in \mathscr{P}$ for all $(p_k)\subset \mathscr{P}$ and every $(a_k)\in p(1)$.  
For a non-empty collection $\mathscr{P}$ of $L^0$-seminorms, we denote by $\text{st}(\mathscr{P})$ its stable hull and by $\text{st-sup}(\mathscr{P})$ the stable hull of the collection of all $\sup_{p\in N} p$, where $N\subset \mathscr{P}$ is finite. 
Next, we recall the definition of a locally $L^0$-convex topology induced by a family of $L^0$-seminorms. 

\begin{definition}
Let $E$ be an $L^0$-module and $\mathscr{P}$ a non-empty collection of $L^0$-seminorms. 
For $x\in E$, let $\mathscr{U}_0(x)$ be the collection of all 
\[
U_{x,N,r}:=\{y\in E\colon \sup_{p\in N} p(x-y)< r\}, 
\]
where $r\in L^0_{++}$ and $N \subset \mathscr{P}$ is finite. 
The topology on $E$ generated by the base $\cup_{x\in X}\mathscr{U}_0(x)$ is denoted by $\mathscr{T}_0=\mathscr{T}_0(\mathscr{P})$. 
\end{definition}

Notice that a  neighbourhood  $U_{x,N,r}$ is a stable set, however $\mathscr{U}_0(x)$ is not necessarily a stable collection. 
The condition $\mathscr{P}=\text{st-sup}(\mathscr{P})$ guarantees the stability of $\mathscr{U}_0(x)$, see \cite[Lemma 2.18]{kupper03} and the discussion following it. 
The locally $L^0$-convex topology induced by the absolute value $|\cdot|\colon L^0\to L^0_+$ is the interval topology w.r.t.~the order of a.e.~dominance. 
\begin{definition}
Let $E$ be an $L^0$-module and $\mathscr{P}$ a stable collection of $L^0$-seminorms. 
For $x\in E$, let $\mathscr{U}_s(x)$ denote the set of all 
\[
U_{x,\mathscr{N},r}:=\{y\in E\colon \sup_{p\in \mathscr{N}} p(x-y)< r\}, 
\]
where $r\in L^0_{++}$ and $\mathscr{N}\subset \mathscr{P}$ is  stable  finite. 
The topology generated by the base $\cup_{x\in E} \mathscr{U}_s(x)$ is denoted by $\mathscr{T}_s=\mathscr{T}_s(\mathscr{P})$.
\end{definition}

By construction, $\mathscr{U}_s(x)$ is a stable collection of stable sets.
Moreover it holds 
\[
\mathscr{U}_s(\sum_k 1_{A_k} x_k)=\sum_k 1_{A_k} \mathscr{U}_s(x_k) 
\]
for all $(x_k)\subset E$ and $(a_k)\in p(1)$. 
Since $\mathscr{B}:=\cup_{x\in E} \mathscr{U}_s(x)$ is a stable collection of stable sets and a topological base, it follows from  \cite[Proposition 3.5]{DJKK13} that the conditional collection $\mathcal{B}$ induced by $\mathscr{B}$ is the conditional topological base of a conditional topology on $\mathbf{E}$. 
The conditional topology induced by the absolute value on $L^0$ is isomorphic to the conditional Euclidean topology on $\mathbf{R}$, see \cite[Theorem 4.4]{DJKK13} and the dicussion preceding it. 
\begin{remark}
All three types of topologies are Hausdorff whenever $\sup_{p\in\mathscr{P}} p(x)=0$ if and only if $x=0$. 
In this case we say that $\mathscr{P}$ is \emph{separated}.  
We will assume the latter property throughout this article. 
\end{remark}
Next, we focus on the connection to conditional locally convex topological vector spaces.  
\begin{definition}
Let $E$ be an $L^0$-module. 
A subset $S\subset E$ is said to be \emph{$L^0$-convex}, if $rx+(1-r)y\in S$ for all $r\in L^0$ with $0\leq r\leq 1$ and every $x,y\in S$. 

Let $\mathbf{E}$ be conditional vector space. 
A conditional subset $\mathbf{S}$ of $\mathbf{E}$ is said to be \emph{conditional convex}, if its carrier is $L^0$-convex. 
\end{definition}
\begin{definition}
A topological $L^0$-module $(E,\mathscr{T})$ is said to be \emph{locally $L^0$-convex}, if there exists a  neighbourhood  base $\mathscr{U}$ of $0\in E$ consisting of stable $L^0$-convex sets in $E$. 
In this case, $\mathscr{T}$ is a \emph{locally $L^0$-convex topology}.
A conditional topological vector space $(\mathbf{E},\mathcal{T})$ is said to be \emph{conditional locally convex}, if there exists a conditional  neighbourhood  base of $\mathbf{0}$ in $\mathbf{E}$ consisting of conditional convex sets. 
\end{definition}
It is proved in \cite{kupper03,zapata2017characterization} that a topological $L^0$-module $(E,\mathscr{T})$ is locally $L^0$-convex if and only if $\mathscr{T}=\mathscr{T}_0(\mathscr{P})$ for some collection $\mathscr{P}$ of $L^0$-seminorms. 
The Hahn-Banach separation theorem \cite{kupper03} is proved under the additional assumption that the collection of $L^0$-seminorms satisfies $\mathscr{P}=\text{st-sup}(\mathscr{P})$. 
This amounts to the identity $\mathscr{U}_0(0)=\mathscr{U}_s(0)$, which makes $\mathscr{U}_0(0)$ a stable collection of stable subsets. 
In addition, the existence of a stable  neighbourhood  base of $0\in E$ consisting of stable subsets is  key to establish a categorical equivalence between locally $L^0$-convex modules and conditional locally convex topological vector spaces. 
This leads us to the following definition:
\begin{definition}
A locally $L^0$-convex module $(E,\mathscr{T})$ is called \emph{stable} if there exists a  neighbourhood  base $\mathscr{U}$ of $0\in E$, which is a stable collection of stable $L^0$-convex sets in $E$. 
In this case, $\mathscr{T}$ is a \emph{stable topology}.
\end{definition}
Suppose that $\mathscr{T}=\mathscr{T}_s(\mathscr{P})$ for some collection $\mathscr{P}$ of $L^0$-seminorms on $E$, then it is clear that $\mathscr{T}$ is a stable locally $L^0$-convex topology.  
Conversely, by an adaptation of a classical argument, it can be proven that for any stable locally $L^0$-convex topology $\mathscr{T}$  there is a stable family of $L^0$-seminorms $\mathscr{P}$ such that $\mathscr{T}=\mathscr{T}_s(\mathscr{P})$, see \cite{OZ2017stabil}.

Let  $\textbf{CLCS}$ denote the category whose objects are conditional locally convex topological vector spaces and whose morphisms are conditional continuous  linear functions. 
Let  $\textbf{LCS}^\text{s}_{L^0}$ denote the category whose objects are stable locally $L^0$-convex modules and whose morphisms are continuous $L^0$-linear functions. 
By \cite[Proposition 3.5]{DJKK13} and \cite[Proposition 3.11]{DJKK13}, we obtain the following equivalence of categories established in \cite[Theorem 1.2]{OZ2017stabil}.\footnote{The statement in \cite{OZ2017stabil} contains a mistake.  There $\mathscr{T}$ is identified with the set of carriers of conditional open sets. As stated here, $\mathscr{T}$ must be the topology generated by the latter set, which is a topological base for $\mathscr{T}$. The use of \cite[Proposition 3.11]{DJKK13} is not mentioned in the proof of \cite[Theorem 1.2]{OZ2017stabil}.}

\begin{theorem}\label{t:catequiv3}   
The functor mapping 
\begin{itemize}
\item a conditionally locally convex topological vector space $(\mathbf{E},\mathcal{T})$ to the stable locally $L^0$-convex module $(E,\mathscr{T})$, where $E$ is the carrier of $\mathbf{E}$ and $\mathscr{T}$ is the topology generated by the
set of carriers of conditional open sets.
\item and a conditional linear continuous function $\mathbf{f}\colon \mathbf{E}\to \mathbf{F}$ to the $L^0$-linear continuous function $f\colon E\to F$, 
\end{itemize}
is an equivalence of categories between $\textbf{CLCS}$ and $\textbf{LCS}^\text{s}_{L^0}$. 
\end{theorem}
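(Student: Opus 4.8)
The plan is to layer the topological structure on top of the algebraic equivalence already obtained in Theorem~\ref{t:catequiv1}. That theorem identifies conditional vector spaces with stable $L^0$-modules and conditional linear maps with module homomorphisms; what remains is to show that this identification matches conditional locally convex topologies with stable locally $L^0$-convex topologies, and conditional continuity with topological continuity. The two bridges for this are \cite[Proposition 3.5]{DJKK13}, which establishes a bijective correspondence between conditional topological bases on $\mathbf{E}$ and those topological bases on $E$ that are stable collections of stable sets, and \cite[Proposition 3.11]{DJKK13}, which matches conditional continuity of a stable map with ordinary continuity of its carrier. I would first check that the proposed functor is well defined on objects and morphisms, then verify that it is fully faithful and essentially surjective.

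For well-definedness on objects, I would start from a conditional locally convex $(\mathbf{E},\mathcal{T})$. Its carrier $E$ is a stable $L^0$-module by Theorem~\ref{t:catequiv1}. A conditional neighbourhood base of $\mathbf{0}$ consisting of conditional convex sets corresponds, under \cite[Proposition 3.5]{DJKK13}, to a stable collection of stable $L^0$-convex subsets of $E$; this collection is a topological base of the topology $\mathscr{T}$ generated by the carriers of conditional open sets, so $(E,\mathscr{T})$ is a stable locally $L^0$-convex module. For morphisms, a conditional continuous linear $\mathbf{f}$ has a carrier $f$ that is a module homomorphism by Theorem~\ref{t:catequiv1} and continuous by \cite[Proposition 3.11]{DJKK13}; preservation of identities and composition is then immediate, since both are read off at the level of carriers.

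To prove the functor is an equivalence, I would verify full faithfulness and essential surjectivity. Full faithfulness reduces to showing that $\mathbf{f}\mapsto f$ is a bijection between conditional continuous linear maps $\mathbf{E}\to\mathbf{F}$ and continuous $L^0$-linear maps $E\to F$; bijectivity at the purely algebraic level is Theorem~\ref{t:catequiv1}, and \cite[Proposition 3.11]{DJKK13} shows that a stable linear map is conditionally continuous if and only if its carrier is continuous, so the bijection restricts correctly to the continuous morphisms on both sides. For essential surjectivity, I would take any stable locally $L^0$-convex $(E,\mathscr{T})$, realize $E$ as the carrier of a conditional vector space $\mathbf{E}$ by Theorem~\ref{t:catequiv1}, and use that $\mathscr{T}$ admits a neighbourhood base which is a stable collection of stable $L^0$-convex sets to produce, via \cite[Proposition 3.5]{DJKK13}, a conditional locally convex topology $\mathcal{T}$ whose conditional convex neighbourhood base has exactly this collection as carriers.

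The step I expect to be delicate is closing the loop in essential surjectivity, namely verifying that the functor sends the constructed $(\mathbf{E},\mathcal{T})$ back to the original $(E,\mathscr{T})$ rather than to some coarser or finer topology. This is precisely the point flagged in the footnote to the theorem: the carriers of conditional open sets form only a base for $\mathscr{T}$, not $\mathscr{T}$ itself, so one must check that the topology generated by these carriers coincides with $\mathscr{T}$. The clean way to handle this is to argue at the level of bases throughout, using that \cite[Proposition 3.5]{DJKK13} gives a bijection between conditional topological bases and stable bases of stable open sets, and that the operations ``generate the topology'' and ``pass to the conditional topology'' are mutually inverse when both topologies are presented by corresponding bases. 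Verifying this inverse relationship, together with checking that conditional convexity of base members corresponds exactly to $L^0$-convexity of their carriers, is the technical heart of the argument.
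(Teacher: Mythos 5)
Your proposal is correct and follows exactly the route the paper indicates: it layers \cite[Propositions 3.5 and 3.11]{DJKK13} on top of the algebraic equivalence of Theorem~\ref{t:catequiv1}, which is precisely the combination the paper cites (via \cite[Theorem 1.2]{OZ2017stabil}) in lieu of a written-out proof. You also correctly isolate the base-versus-topology subtlety that the paper's footnote flags as the point needing care.
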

Next, we will compare the different types of classical and conditional topologies. 
We need two preliminary results. 
\begin{proposition}\label{p:equivtop}
Let $\mathscr{P}$ be a non-empty collection of $L^0$-seminorms on an $L^0$-module $E$. 
Then 
$$\mathscr{T}_{\epsilon,\lambda}(\mathscr{P})=\mathscr{T}_{\epsilon,\lambda}(\text{st}(\mathscr{P}))=\mathscr{T}_{\epsilon,\lambda}(\text{st-sup}(\mathscr{P})).$$
\end{proposition}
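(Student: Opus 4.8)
The plan is to prove the chain of equalities by establishing the two obvious inclusions together with one non-trivial reverse inclusion. First I would record the \emph{monotonicity principle}: if $\mathscr{P}_1\subseteq \mathscr{P}_2$ are collections of $L^0$-seminorms, then $\mathscr{T}_{\epsilon,\lambda}(\mathscr{P}_1)\subseteq \mathscr{T}_{\epsilon,\lambda}(\mathscr{P}_2)$, simply because every basic neighbourhood $U_{x,N,\epsilon,\lambda}$ with $N\subset\mathscr{P}_1$ finite is also a basic neighbourhood for $\mathscr{P}_2$, so the base for $\mathscr{P}_1$ is a subfamily of the base for $\mathscr{P}_2$. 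Taking trivial partitions shows $\mathscr{P}\subseteq \text{st}(\mathscr{P})$, and taking singletons $N=\{p\}$ shows $\text{st}(\mathscr{P})\subseteq\text{st-sup}(\mathscr{P})$; hence $\mathscr{T}_{\epsilon,\lambda}(\mathscr{P})\subseteq \mathscr{T}_{\epsilon,\lambda}(\text{st}(\mathscr{P}))\subseteq \mathscr{T}_{\epsilon,\lambda}(\text{st-sup}(\mathscr{P}))$. It then remains to prove the single reverse inclusion $\mathscr{T}_{\epsilon,\lambda}(\text{st-sup}(\mathscr{P}))\subseteq \mathscr{T}_{\epsilon,\lambda}(\mathscr{P})$, after which all three topologies coincide.

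Since the $(\epsilon,\lambda)$-topology is a linear (translation-invariant) topology, it suffices to compare neighbourhood bases at $0$, so I would show that every basic $\text{st-sup}(\mathscr{P})$-neighbourhood of $0$ contains a basic $\mathscr{P}$-neighbourhood of $0$. I first reduce a basic neighbourhood built from a finite family $N'\subset\text{st-sup}(\mathscr{P})$ to one built from a single seminorm, using that $\text{st-sup}(\mathscr{P})$ is closed under finite suprema: for $q=\sum_k 1_{A_k}\sup_{p\in N_k}p$ and $q'=\sum_l 1_{B_l}\sup_{p\in M_l}p$ one has $\max(q,q')=\sum_{k,l}1_{A_k\cap B_l}\sup_{p\in N_k\cup M_l}p\in\text{st-sup}(\mathscr{P})$. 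Thus it is enough to treat $U_{0,\{q\},\epsilon,\lambda}$ for a single $q=\sum_k 1_{A_k}q_k\in\text{st-sup}(\mathscr{P})$, where $q_k=\sup_{p\in N_k}p$ with $N_k\subset\mathscr{P}$ finite and $(a_k)\in p(1)$.

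The key step exploits the countable chain condition. Since $(a_k)$ is an at most countable partition, $\sum_k\mathbb{P}(A_k)=1$, so for any chosen $\lambda'\in(0,\lambda)$ I may pick a finite $K$ with $\mathbb{P}(\cup_{k>K}A_k)<\lambda-\lambda'$. Setting $N:=\cup_{k\leq K}N_k$, a finite subset of $\mathscr{P}$, I claim $U_{0,N,\epsilon,\lambda'}\subseteq U_{0,\{q\},\epsilon,\lambda}$. Indeed, let $y\in U_{0,N,\epsilon,\lambda'}$ and put $G:=\{\sup_{p\in N}p(y)<\epsilon\}$, so $\mathbb{P}(G)>1-\lambda'$. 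On $A_k$ with $k\leq K$ one has $q(y)=q_k(y)=\sup_{p\in N_k}p(y)\leq \sup_{p\in N}p(y)$, hence $q(y)<\epsilon$ on $G\cap\bigl(\cup_{k\leq K}A_k\bigr)$; the estimate $\mathbb{P}\bigl(G\cap\cup_{k\leq K}A_k\bigr)\geq \mathbb{P}(G)-\mathbb{P}(\cup_{k>K}A_k)>(1-\lambda')-(\lambda-\lambda')=1-\lambda$ then gives $\mathbb{P}(q(y)<\epsilon)>1-\lambda$, that is, $y\in U_{0,\{q\},\epsilon,\lambda}$. This proves the reverse inclusion and thereby the proposition.

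I expect the main obstacle to be exactly this last step: the element $q\in\text{st-sup}(\mathscr{P})$ may involve the infinitely many finite families $(N_k)$ along an infinite partition, whereas an $(\epsilon,\lambda)$-basic neighbourhood for $\mathscr{P}$ is only allowed to use \emph{finitely} many seminorms. The resolution is to trade part of the $\lambda$-budget against the measure of the tail $\cup_{k>K}A_k$, which is controllable precisely because countable chain condition forces the partition to be at most countable and hence the tail measure to vanish; the remaining verifications (closure of $\text{st-sup}(\mathscr{P})$ under finite suprema and the elementary measure estimate) are routine.
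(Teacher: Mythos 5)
Your proposal is correct and follows essentially the same route as the paper: reduce to the single reverse inclusion for $\text{st-sup}(\mathscr{P})$, take a basic neighbourhood generated by $q=\sum_k 1_{A_k}\sup_{p\in N_k}p$, truncate the countable partition at a finite stage whose tail has small measure, and absorb that tail into the $\lambda$-budget to produce a $\mathscr{P}$-neighbourhood inside it (the paper fixes $\lambda'=\lambda/2$ where you keep a general $\lambda'\in(0,\lambda)$, and your explicit reduction of a finite family $N'\subset\text{st-sup}(\mathscr{P})$ to a single seminorm via closure under finite suprema is a small point the paper leaves implicit). No gaps.
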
 

\begin{proof}
Since  $\mathscr{P}\subset\text{st}(\mathscr{P})\subset \text{st-sup}(\mathscr{P})$, it suffices to show that $\mathscr{T}_{\epsilon,\lambda}(\mathscr{P})=\mathscr{T}_{\epsilon,\lambda}(\text{st-sup}(\mathscr{P}))$. 
Since $\mathscr{P}\subset \text{st-sup}(\mathscr{P})$, then $\mathscr{T}_{\epsilon,\lambda}(\mathscr{P})\subset\mathscr{T}_{\epsilon,\lambda}(\text{st-sup}(\mathscr{P}))$. 

Conversely, let $(a_k)\in p(1)$, $(S_k)$ be a countable family of finite subcollections of $\mathscr{P}$ and $\epsilon,\lambda\in\R_+$ with $\lambda<1$. 
Put $q=\sum_k 1_{A_k} \sup_{p\in S_k} p$. 
Choose $m\in\N$ large enough such that $\mathbb{P}(\cup_{k>m} A_k)<\lambda/2$. 
Then we have 
 \begin{align*}
 U_{0,\{q\},\epsilon,\lambda}&=\{ x\in E \colon \mathbb{P}(\bigcup_{k\geq 1} \{\sup_{p\in S_k}(x)<\epsilon\}\cap A_k )>1-\lambda \}\\
 &\supset\{ x\in E \colon \mathbb{P}(\bigcup_{k=1}^m \{\sup_{p\in S_k}(x)<\epsilon\}\cap A_k )>1-\lambda \}\\
 &\supset\{x\in E \colon \mathbb{P}(\{\sup_{p\in \cup_{k=1}^m S_k}p(x) <\epsilon\}\cap\cup_{k=1}^m A_k)>1-\lambda  \}\\
 &\supset\{x\in E \colon \mathbb{P}(\sup_{p\in \cup_{k=1}^m S_k}p(x) <\epsilon)>1-\lambda + \mathbb{P}(\{\sup_{p\in \cup_{k=1}^m S_k}p(x)<\epsilon\}\cap\cup_{k>m} A_k)\}\\
 &\supset\{x\in E \colon \mathbb{P}(\sup_{p\in \cup_{k=1}^m S_k}p(x) <\epsilon)>1-\lambda + \mathbb{P}(\cup_{k>m} A_k)\}\\
 & =\{x\in E \colon \mathbb{P}(\sup_{p\in \cup_{k=1}^m S_k}p(x)<\epsilon)>1-\frac{\lambda}{2}\}, 
 \end{align*}
where the last set is a neighbourhood w.r.t.~$\mathscr{P}$.  
\end{proof}

\begin{proposition}\label{l:equivtop}
Let $\mathscr{P}_1,\mathscr{P}_2$ be collections of $L^0$-seminorms on an $L^0$-module $E$. If $\mathscr{T}_0(\mathscr{P}_1)=\mathscr{T}_0(\mathscr{P}_2)$, then $\mathscr{T}_{\epsilon,\lambda}(\mathscr{P}_1)=\mathscr{T}_{\epsilon,\lambda}(\mathscr{P}_2)$.
\end{proposition}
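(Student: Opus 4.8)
The plan is to deduce the coincidence of the two $(\epsilon,\lambda)$-topologies from the coincidence of the two locally $L^0$-convex topologies by a local (pointwise-in-$\Omega$) comparison of basic neighbourhoods of $0$. Since an $(\epsilon,\lambda)$-topology is a linear topology, it is translation invariant, so it suffices to compare neighbourhood bases at $0$; and by symmetry in $\mathscr{P}_1,\mathscr{P}_2$ it is enough to prove one inclusion, say $\mathscr{T}_{\epsilon,\lambda}(\mathscr{P}_1)\subseteq\mathscr{T}_{\epsilon,\lambda}(\mathscr{P}_2)$, using only $\mathscr{T}_0(\mathscr{P}_1)\subseteq\mathscr{T}_0(\mathscr{P}_2)$. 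A basic $\mathscr{P}_1$-neighbourhood of $0$ has the form $U_{0,N_1,\epsilon,\lambda}=\{y\colon\mathbb{P}(q_1(y)<\epsilon)>1-\lambda\}$ with $q_1:=\sup_{p\in N_1}p$ and $N_1\subset\mathscr{P}_1$ finite, so the goal is to exhibit a basic $\mathscr{P}_2$-neighbourhood of $0$ contained in it.

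First I would pass to the locally $L^0$-convex level. The set $\{y\colon q_1(y)<\epsilon\}=U_{0,N_1,\epsilon}$ is a basic $\mathscr{T}_0(\mathscr{P}_1)$-neighbourhood of $0$ (take the constant radius $\epsilon\in L^0_{++}$), hence a $\mathscr{T}_0(\mathscr{P}_2)$-neighbourhood of $0$ by hypothesis. Therefore there are a finite $N_2\subset\mathscr{P}_2$ and $s\in L^0_{++}$ such that, writing $q_2:=\sup_{p\in N_2}p$, one has the inclusion $\{y\colon q_2(y)<s\}\subseteq\{y\colon q_1(y)<\epsilon\}$; equivalently, $q_2(y)<s$ a.e.\ implies $q_1(y)<\epsilon$ a.e.

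The heart of the argument is to turn this implication, which involves the $L^0$-valued radius $s$, into a probabilistic containment. Since $s>0$ a.e., I would choose a constant $\delta>0$ with $\mathbb{P}(s\geq\delta)>1-\lambda/2$ and consider the basic $\mathscr{P}_2$-neighbourhood $U_{0,N_2,\delta,\lambda/2}$. Given $y$ in it, put $B:=\{q_2(y)<\delta\}\cap\{s\geq\delta\}$, so that $\mathbb{P}(B)>1-\lambda$. The key move is to localise on $B$ using the $L^0$-homogeneity $q_i(1_B y)=1_B q_i(y)$: on $B$ one has $q_2(1_B y)=q_2(y)<\delta\leq s$, while on $B^c$ one has $q_2(1_B y)=0<s$, so $q_2(1_B y)<s$ a.e.; the implication above then gives $q_1(1_B y)<\epsilon$ a.e., that is $q_1(y)<\epsilon$ on $B$. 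Hence $\mathbb{P}(q_1(y)<\epsilon)\geq\mathbb{P}(B)>1-\lambda$, which shows $U_{0,N_2,\delta,\lambda/2}\subseteq U_{0,N_1,\epsilon,\lambda}$ and completes the inclusion; the reverse follows by the symmetric argument. I expect the main obstacle to be precisely this transfer step: the continuity data extracted from $\mathscr{T}_0$ comes with an $L^0_{++}$ threshold $s$ rather than a scalar one, and it is the homogeneity identity together with the under-approximation of $s$ by the constant $\delta$ on a large event that bridges the a.e.\ implication and the required probabilistic estimate.
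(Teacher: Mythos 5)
Your proof is correct and follows essentially the same route as the paper: both extract from the $\mathscr{T}_0$-hypothesis a finite $N_2\subset\mathscr{P}_2$ and an $L^0_{++}$-valued radius $s$, and then replace $s$ by a scalar threshold valid outside an event of probability at most $\lambda/2$. The only cosmetic differences are that the paper under-approximates $s$ by a step function $\sum_k 1_{A_k}\epsilon_k$ and truncates the partition while you pick a single constant $\delta$ with $\mathbb{P}(s\geq\delta)>1-\lambda/2$, and that you spell out the localization-by-indicators/homogeneity step that the paper leaves implicit in the first inclusion of its chain.
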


\begin{proof}
For $\epsilon,\lambda\in\R_+$ with $\lambda<1$ and finite $N\subset \mathscr{P}_1$,   
 choose finite $N^\prime\subset \mathscr{P}_2$ and $s\in L^0_{++}$ such that 
\[
\{x\in E\colon \sup_{p\in N^\prime} p(x)<s\}\subset \{x\in E\colon \sup_{p\in N} p(x)<\epsilon\}. 
\]
Let $(a_k)\in p(1)$ and $(\epsilon_k)\subset \R$ with $0<\sum_k 1_{A_k}\epsilon_k\leq s$. 
Choose $m\in\N$ large enough so that $\mathbb{P}(\cup_{k>m} A_k)< \lambda/2$. 
Then one has 
\begin{align*}
&\{ x\in E \colon\mathbb{P}(\sup_{p\in N} p(x) <\epsilon)>1-\lambda\}\\
&\supset \{ x\in E \colon \mathbb{P}(\cup_{k\in \N}\{\sup_{p\in N^\prime} p(x)< \epsilon_k\}\cap A_k)>1-\lambda \}\\
&\supset \{ x\in E \colon \mathbb{P}(\cup_{k=1}^m\{\sup_{p\in N^\prime} p(x)< \epsilon_k\}\cap A_k)>1-\lambda \}\\
&\supset \{ x\in E \colon \mathbb{P}(\{\sup_{p\in N^\prime} p(x)< \min_{1\leq k\leq m} \epsilon_k\}\cap \cup_{k=1}^m A_k)>1-\lambda \}\\
&\supset \{ x\in E \colon \mathbb{P}(\sup_{p\in N^\prime} p(x)< \min_{1\leq k\leq m} \epsilon_k)>1-\lambda+\mathbb{P}(\cup_{k>m} A_k) \}\\
&\supset \{ x\in E \colon \mathbb{P}(\sup_{p\in N^\prime} p(x)< \min_{1\leq k\leq m} \epsilon_k)>1-\frac{\lambda}{2} \}.
\end{align*}
This proves that $\mathscr{T}_{\epsilon,\lambda}(\mathscr{P}_1)\subset \mathscr{T}_{\epsilon,\lambda}(\mathscr{P}_2)$. 
By interchanging the roles of $\mathscr{P}_1$ and $\mathscr{P}_2$, it follows the claim.  
\end{proof}
In the following remark, we compare the three topologies. 
\begin{remark}\label{r:comparison}
Given a locally $L^0$-convex module $(E,\mathscr{T})$, there exists a collection $\mathscr{P}$ of $L^0$-seminorms such that $\mathscr{T}=\mathscr{T}_0(\mathscr{P})$, see \cite{kupper03,zapata2017characterization}. 
Proposition \ref{l:equivtop} tells us that the topology $\mathscr{T}_{\epsilon,\lambda}$ does not depend on $\mathscr{P}$. 
One can verify that $\mathscr{T}_s=\mathscr{T}_s(\text{st}(\mathscr{P}))=\mathscr{T}_0(\text{st-sup}(\mathscr{P}))$ is the coarsest stable topology such that $\mathscr{T}\subset\mathscr{T}_s$. Therefore, $\mathscr{T}_s$ does not depend on $\mathscr{P}$. Proposition \ref{p:equivtop} shows that $\mathscr{T}_s$ does not produce a new $(\epsilon,\lambda)$-topology.  
Consequently, $\mathscr{T}$ uniquely defines $\mathscr{T}_{\epsilon,\lambda}$ and $\mathscr{T}_s$ independent of the choice of the inducing family of $L^0$-seminorms.

For $S\subset E$ stable, we have $\text{cl}_{\epsilon,\lambda}(S)=\text{cl}_{0}(S)=\text{cl}_{s}(S)$ due to \cite[Theorem 3.12]{guo10}. 
Therefore, by \cite[Proposition 3.5]{DJKK13} and \cite[Proposition 3.7]{DJKK13}, the classical closure $\text{cl}_{0}(S)$ is the carrier of the conditional closure of $\mathbf{S}$ w.r.t.~the conditional topology $\mathcal{T}$. 

Denote by $E^\ast_{\epsilon,\lambda}$ the space of $L^0$-linear and $\mathscr{T}_{\epsilon,\lambda}$-continuous functions $f\colon E\to L^0$.   
Similarly, define the dual modules $E^\ast_{0}$ and $E^\ast_s$.  
By \cite[Proposition 2.14]{guo10},  we have $E^\ast_{\epsilon,\lambda}=E^\ast_{s}$. 
By \cite[Corollary 3.6]{guo2015random1}, if $\mathscr{T}$ if stable, then $E^\ast_{\epsilon,\lambda}=E^\ast_{0}$, and it follows from \cite[Proposition 3.11]{DJKK13} that $E^\ast_{\epsilon,\lambda}$ is the carrier of the conditional dual space $\mathbf{E^\ast}$ w.r.t.~the corresponding conditional topology. 
In this case, since we have the same dual module for the three topologies, we will denote them by $E^\ast=E^\ast[\mathscr{T}]$.
If $\mathscr{T}$ is not stable, then $E^\ast_0$ is not necessarily stable as the next example shows. 
However, by \cite[Theorem 3.6]{guo2015random1} and stabilization, we have $(E^\ast_0)^s=E^\ast_{\epsilon,\lambda}$, where we denote by $E^s$ the stabilization of $E$ as defined in Section \ref{s:sec1}. 
\end{remark}

\begin{example}
Let $E=(L^0)^\N$ be the set of all sequences in $L^0$. 
We consider the family of $L^0$-seminorms $\mathscr{P}:=\{p_n\colon n\in\N\}$, where $p_k((x_n)):=|x_k|$, $(x_n)\in E$.  
We claim that 
\begin{align*}
E^\ast_0&=\{(x_n)\in E \colon \exists m\in\N \textnormal{ such that }x_n=0\textnormal{ for all }n\geq m\},\\
E^\ast_{\epsilon,\lambda}&=E^\ast_s:=\{(x_n)\in E \colon \exists m \in L^0_s(\N) \textnormal{ such that }x_n=0\textnormal{ for all }n\geq m\}, 
\end{align*}
where $x_\n:=\sum_k 1_{A_k}x_{n_k}$ whenever $\n=\sum_k 1_{A_k}n_k$, $(a_k)\in p(1)$ and $(n_k)\subset\N$.
Notice that the latter space is the stable hull of the former. 
Observe also that these spaces are different whenever there exists an infinite $(a_k)\in p(1)$. 
By Remark \ref{r:comparison}, it suffices to prove the first equality. 
Any $x:=(x_n)$ with $x_n=0$ for all $n\geq m$ defines via
\[
f_x((y_n)):=\sum_{n \geq 1} x_n y_n 
\]
a $\mathscr{T}_0$-continuous $L^0$-linear function. 

Conversely, suppose that $f\in E^\ast_0$. 
By continuity, there exists $s\in L^0_{++}$ and $m\in \N$ such that $|f(x)|< 1$ whenever $x=(x_n)$ satisfies $|x_k|< s$ for all $k=1,\ldots,m$. 
Let $e_k\in E$ denote the sequence such that its $k^{\text{th}}$-coordinate is one and zero otherwise.  
Then $|f(n e_k)|=n|f(e_k)|\leq 1$ for every $k>m$ and $n\in \N$. 
This implies that $f(e_k)=0$ whenever $k>m$. 
Now define $x=(x_n)$ by $x_k:=f(e_k)$ for $k\leq m$ and $x_k:=0$ otherwise. 
Then we have $f_x=f$. 
\end{example}

Finally, we collect some examples of stable locally $L^0$-convex modules. 

\begin{examples}
\begin{enumerate}
\item Let $(X,\|\cdot\|)$ be a normed vector space, $L^0(X)$ the space of all strongly measurable functions modulo a.e.~equality, and define $\|\cdot \|\colon L^0(X)\to L^0$ by $\|x\|(\omega):=\|x(\omega)\|$ a.e. $\omega\in \Omega$. 
Then $L^0(X)$ is a stable $L^0$-module and $\|\cdot\|$ is an $L^0$-norm, see \cite[Section 5.1]{haydon1991randomly} and \cite[Section 4]{DJK16}. 
The space $L^0(X)$ can be constructed by a conditional completion, and thus is the carrier of a conditional Banach space, see \cite{DJK16}. 
\item We call $(E,F,\langle\cdot,\cdot\rangle)$ a \emph{stable dual pair}, if $E,F$ are stable $L^0$-modules and $\langle\cdot,\cdot\rangle\colon E\times F\to L^0$ is an $L^0$-bilinear form such that $\langle x,y\rangle=0$ for all $y\in F$ implies $x=0$, and $\langle x,y\rangle=0$ for all $x\in E$ implies $y=0$, see \cite{guo10} as well. 
From $L^0$-linearity follows stability, and therefore the associated conditional structure is a conditional dual pair in the sense of \cite[Definition 5.6]{DJKK13}. 
It can be checked that $\{|\langle \cdot, y\rangle|\colon y\in F\}$ is a stable collection of $L^0$-seminorms. 
\item Let $(X,Y,\langle \cdot,\cdot \rangle)$ be a dual pair of Banach spaces such that 
\begin{itemize}
\item[(a)] $|\langle x,y\rangle|\leq \|x\| \|y\|$ for all $x\in X$ and $y\in Y$,  
\item[(b)]  both norm-closed unit balls are weakly closed.
\end{itemize}
Then $\langle \cdot,\cdot \rangle$ extends to an $L^0$-duality pairing on $L^0(X)\times L^0(Y)$, see \cite[Lemma 3.1]{drapeau2017fenchel}. 
\item Let $(E,\mathscr{T})$ be a  locally $L^0$-convex module.  
Consider the canonical duality pairing between $E$ and $E^\ast_s$, and denote by $\sigma_{\epsilon,\lambda}(E,E^\ast_s)$, $\sigma_{0}(E,E^\ast_s)$ and $\sigma_{s}(E,E^\ast_s)$ the weak $(\epsilon,\lambda)$-topology, the weak $L^0$-topology and the weak stable topology on $E$ induced by the stable collection of $L^0$-seminorms $\{|\langle \cdot, x^\ast\rangle|\colon x^\ast\in E^\ast_s\}$, respectively. 
Similarly, we can introduce the three different types of weak$^\ast$-topologies $\sigma_{\epsilon,\lambda}(E^\ast_s,E)$, $\sigma_{0}(E^\ast_s,E)$ and $\sigma_{s}(E^\ast_s,E)$. 
\item Let $\mathcal{E}\subset \mathcal{F}$ be a sub-$\sigma$-algebra and $1\leq p\leq \infty$. 
Let $\bar{L}^0(\mathcal{E})$ denote the space of equivalence classes of $\mathbb{R}\cup\{+\infty\}$-valued random variables. 
Let $\Vert\cdot\Vert_p:L^0(\mathcal{F})\rightarrow\bar{L}^0(\mathcal{E})$ be defined by 
\[
\Vert x \Vert _{p}:=
\begin{cases}
\lim_{n\to \infty}\mathbb{E}[|x|^{p}\wedge n | \mathcal{E}]^{1/p}, & \text{ if } p<\infty,\\
\inf\{ y\in\bar{L}^{0}(\mathcal{E})\colon y\geq |x|\},  & \textnormal{ if } p=\infty.
\end{cases}
\]
Let $L^p_\mathcal{E}(\mathcal{F}):=\{x\in L^0(\mathcal{F})\colon \Vert x\Vert_p\in L^0(\mathcal{E})\}$. 
Then $(L^p_\mathcal{E}(\mathcal{F}),\Vert\cdot\Vert_p)$ is a stable $L^0(\mathcal{E})$-normed module for which one has the representation $L^p_\mathcal{E}(\mathcal{F})=L^0(\mathcal{E})L^p(\mathcal{F})$. 
For H\"older conjugates $(p,q)$ where $1\leq p < \infty$, the function $\langle\cdot,\cdot\rangle:L^p_\mathcal{E}(\mathcal{F})\times L^q_\mathcal{E}(\mathcal{F})\rightarrow L^0(\mathcal{E})$ defined by $(x,y)\mapsto\mathbb{E}[x y|\mathcal{E}]$ is an $L^0(\mathcal{E})$-duality pairing.\footnote{For $p=2$, the $L^2$-type module $L^p_\mathcal{E}(\mathcal{F})$ was introduced in \cite{HR1987conditioning}, see \cite{kupper04} for other values of $p$ and the representation result.} 
\end{enumerate}
\end{examples}
\section{Classical and conditional compactness}\label{s:compact}
In this section, we study to which extent basic compactness results in functional analysis can be established in topological $L^0$-modules. The classical notion of compactness does not bear much fruits. 
Indeed, it is proved in \cite{guo08} that the Banach-Alaoglu and the Banach-Bourbaki-Kakutani-\v{S}mulian theorem for the $(\epsilon,\lambda)$-topology are fulfilled if and only if the support of $E$ is essentially atomic. 
Since the corresponding $L^0$- and stable topology are finer than the $(\epsilon,\lambda)$-topology, there is also no hope to prove these theorems in full generality for stable and $L^0$-type topological modules.  
We will complement the previous finding by verifying that a stable locally $L^0$-convex module is anti-compact if and only if its support is atomless. 
This motivates the concept of stable compactness for stable locally $L^0$-convex modules which we will derive from conditional compactness. 
In the remainder of this article, we then proceed to show that all basic compactness results in functional analysis can be established in full generality for stable locally $L^0$-convex modules using stable compactness. 
In the finite dimensional case, we provide characterizations of stable compactness in terms of a.e.~convergence and compact set-valued mappings, respectively, which are relevant in applications. 

The support of an $L^0$-module $E$ is defined as
 \[
 \text{supp}(E):=\wedge\left\{ a\in\mathcal{A} \colon 1_{A^c}E=\{0\}\right\}.
 \]
 
\begin{lemma} 
\label{lem: clusterPoint}
Let $(r_n)\subset L^0_+$ be with $r_n\neq 0$ for all $n\in\N$. Suppose that $\{\sup_{n\in \N} \, r_n>0\}$ is atomless.  
Then there exists $r\in L^0_{++}$ such that $\mathbb{P}(r_n\geq r)>0$ for all $n\in\N$.
\end{lemma}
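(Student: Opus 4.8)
The plan is to reduce the construction of the desired strictly positive $r\in L^0_{++}$ to positive-measure level sets of the $r_n$, and then to control the unavoidable overlaps between these level sets by Borel--Cantelli rather than by a delicate disjointification argument.

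First, for each $n$, since $r_n\in L^0_+$ and $r_n\neq 0$ one has $\mathbb{P}(r_n>0)>0$; writing $\{r_n>0\}=\bigcup_k\{r_n\geq 1/k\}$ as an increasing union, continuity from below yields a constant $c_n>0$ with $\mathbb{P}(D_n)>0$, where $D_n:=\{r_n\geq c_n\}$. Note that on $D_n$ we have $\sup_k r_k\geq r_n\geq c_n>0$, so $D_n\subset\{\sup_k r_k>0\}$; hence each $D_n$ is a positive-measure subset of the atomless set and is therefore itself atomless.

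Next, I would exploit atomlessness in the single place where it is essential: since $D_n$ is atomless, it contains subsets of arbitrarily small positive measure, so I pick $E_n\subset D_n$ with $0<\mathbb{P}(E_n)\leq 2^{-n}$. The summability $\sum_n\mathbb{P}(E_n)<\infty$ together with Borel--Cantelli guarantees that almost every $\omega$ lies in only finitely many of the $E_n$. I then set $r(\omega):=\min\{c_n:\omega\in E_n\}$ on the (a.e.\ finite, nonempty) index set and $r(\omega):=1$ elsewhere, including on the null exceptional set of infinite overlaps and off $\bigcup_n E_n$. Measurability follows by writing the infimum $g:=\inf_n\bigl(c_n 1_{E_n}+\infty\cdot 1_{E_n^c}\bigr)$ as a countable infimum of measurable functions and taking $r=g$ where $g>0$ and $r=1$ on $\{g=0\}$; one has $r\in L^0_{++}$ because off a null set the relevant minimum runs over a finite nonempty set of strictly positive constants.

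Finally, for each fixed $n$, on $E_n$ minus the null overlap set one has $r\leq c_n\leq r_n$, so $E_n\subset\{r_n\geq r\}$ up to a null set, whence $\mathbb{P}(r_n\geq r)\geq\mathbb{P}(E_n)>0$, as required. The main obstacle I expect is the temptation to make $r$ small \emph{on a single set}: no strictly positive $r$ can lie below $r_n$ on a positive-measure subset of every $\{r_n>0\}$ at once, since a single $r>0$ cannot be small on a positive-measure slice of every positive-measure set (the set $\{r<\delta\}$ entirely misses the positive-measure set $\{r\geq\delta\}$). The crux of the argument is therefore that $r$ must be allowed to be small on a \emph{different} set $E_n$ for each $n$; atomlessness makes these sets selectable with summable measures, and Borel--Cantelli controls their overlaps so that the pointwise minimum defining $r$ remains strictly positive almost everywhere.
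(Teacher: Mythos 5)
Your proof is correct. It follows the same basic strategy as the paper's — use atomlessness of $\{\sup_n r_n>0\}$ to shrink each positive-measure set $\{r_n>0\}$ (in your case, first to a level set $D_n=\{r_n\geq c_n\}$) down to a subset of summably small positive measure, and then define $r$ piecewise to be small on these subsets and $1$ elsewhere — but the way you control the overlaps is genuinely different. The paper disjointifies explicitly: it chooses $B_{n+1}\subset A_{n+1}$ with $\mathbb{P}(B_{n+1})<\tfrac12\mathbb{P}(B_n)$ and sets $C_n:=B_n\setminus\bigcup_{k>n}B_k$, the geometric decay guaranteeing $\mathbb{P}(C_n)>0$; the resulting $C_n$ are pairwise disjoint, so $r:=\sum_n 1_{C_n}r_n/2+1_{\bigcap C_n^c}$ is well defined with no further argument. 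You instead keep the sets $E_n$ overlapping, invoke Borel--Cantelli to see that almost every point lies in only finitely many of them, and take $r$ to be the pointwise minimum of the finitely many relevant constants $c_n$. Your route buys a slightly softer argument (no set subtraction, no tail-sum estimate, and $r$ is built from constants $c_n$ rather than from $r_n/2$), at the cost of working only up to a null set and of a mildly more delicate measurability discussion; indeed your reformulation ``$r=g$ where $g>0$'' would literally give $r=\infty$ off $\bigcup_n E_n$, though your originally stated definition ($r=1$ there) is the correct one and the fix is immediate, e.g.\ $r=\min(g,1)$ on $\{0<g\}$ and $r=1$ on $\{g=0\}$. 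Both arguments are complete and of comparable length.
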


\begin{proof}
For each $n\in\N$, let $A_n:=\left\{r_n>0\right\}$. 
Since $r_n\neq 0$, one has that $\mathbb{P}(A_n)>0$ for all $n\in\N$. 
Let $B_1:=A_1$.  
Since $A_2\subset \{\sup_n r_n>0\}$ and the latter is atomless, we can choose $B_2\subset A_2$ with $0<\mathbb{P}(B_2)<\frac{1}{2}\mathbb{P}(B_1)$.
By induction, for each $n\geq 1$, we can find $B_{n+1}\subset A_{n+1}$ with $0<\mathbb{P}(B_{n+1})<\frac{1}{2}\mathbb{P}(B_n)$. 
For fixed $n\in\N$, it thus holds
\[
\mathbb{P}(B_k)<\frac{1}{2^{k-n}}\mathbb{P}(B_n)\quad\textnormal{ for all }k>n.
\]
Now, for each $n\in\N$, put $C_n:=B_n - \underset{k>n}\bigcup B_k$.  Then, 
\[
\mathbb{P}(C_n)\geq \mathbb{P}(B_n) -  \sum_{k>n}\mathbb{P}(B_k)> \mathbb{P}(B_n) -  \sum_{k>n}^\infty \frac{1}{2^{k-n}}\mathbb{P}(B_n)=0.
\]
Note that $(c_n)\in p(\vee c_n)$. 
Define $r:=\sum_{n\in\N} 1_{C_n}r_n/2  + 1_{\cap C_n^c}$. 
Then $r\in L^0_{++}$, and we conclude
\[
\mathbb{P}(r_n\geq r)\geq\mathbb{P}(C_n)>0\quad\textnormal{ for all }n\in\N.
\] 
\end{proof}

\begin{proposition}\label{p:anti-compact}
Let $(E,\mathscr{T})$ be a stable locally $L^0$-convex module with $\text{supp}(E)>0$. 
Then $E$ is anti-compact if and only if $\text{supp}(E)$ is atomless.
\end{proposition}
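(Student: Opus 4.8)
The plan is to prove the two implications separately: the easy direction (anti-compactness forces the support to be atomless) by producing an infinite compact set on an atom, and the substantial direction (atomless support forces anti-compactness) by a cluster-point argument whose decisive input is Lemma~\ref{lem: clusterPoint}.

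For the \emph{only if} part I argue by contraposition. Suppose $\text{supp}(E)$ is not atomless and let $a\leq\text{supp}(E)$ be an atom with representative $A$, $\mathbb{P}(A)>0$. First I would note $1_AE\neq\{0\}$: otherwise $a^c$ would belong to $\{b\colon 1_BE=\{0\}\}$, forcing $\text{supp}(E)\leq a^c$ and contradicting $0<a\leq\text{supp}(E)$. Pick $x_0\in E$ with $z:=1_Ax_0\neq 0$ and set $z_n:=\tfrac1n z$. Since $a$ is an atom, every measurable function is a.e.\ constant on $A$; hence for any $L^0$-seminorm $q$ and any $r\in L^0_{++}$ the element $q(z_n)=\tfrac1n q(z)$ is a constant multiple of $1_A$ which drops below the a.e.\ positive (and on $A$ a.e.\ constant) value of $r$ once $n$ is large. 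Thus $z_n\to 0$ in $\mathscr{T}$, the $z_n$ are pairwise distinct because $(\tfrac1n-\tfrac1m)z\neq 0$ on the atom, and $\{z_n\colon n\in\N\}\cup\{0\}$ is an infinite compact set. Hence $E$ is not anti-compact.

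For the \emph{if} part, assume $\text{supp}(E)$ is atomless and let $K\subset E$ be compact and infinite; I will derive a contradiction. Choose pairwise distinct $(x_n)\subset K$. As $K$ is compact, the tails $T_N=\{x_n\colon n\geq N\}$ have the finite intersection property, so $\bigcap_N \mathrm{cl}_K(T_N)\neq\emptyset$ and the sequence has a cluster point $x^\ast\in K$; discarding the at most one index with $x_n=x^\ast$, I may assume $x_n\neq x^\ast$ for all $n$. By separatedness $\sup_{p\in\mathscr{P}}p(x_n-x^\ast)\neq 0$, so for each $n$ there is $p_n\in\mathscr{P}$ with $C_n:=\{p_n(x_n-x^\ast)>0\}$ of positive measure. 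The crucial step is to paste the $p_n$ into a single member of the family separating every $x_n$ from $x^\ast$ at once: using the disjoint refinement property of an atomless measure algebra (a routine exhaustion based on Sierpi\'nski's intermediate value theorem) I select pairwise disjoint $D_n\subseteq C_n$ with $\mathbb{P}(D_n)>0$, complete $(D_n)$ to a partition $(A_n)\in p(1)$ with $A_n\supseteq D_n$ by absorbing the complement into one piece, and put $q:=\sum_n 1_{A_n}p_n$. Since $\mathscr{P}$ is a stable collection, $q\in\mathscr{P}$, and on $D_n$ one has $q(x_n-x^\ast)\geq p_n(x_n-x^\ast)>0$, so $r_n:=q(x_n-x^\ast)\neq 0$ for every $n$. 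Because $x_n-x^\ast$ vanishes off $\text{supp}(E)$, so does $q(x_n-x^\ast)$, whence $\{\sup_n r_n>0\}\subseteq\text{supp}(E)$ is atomless. Lemma~\ref{lem: clusterPoint} then yields $r\in L^0_{++}$ with $\mathbb{P}(r_n\geq r)>0$ for all $n$, so no $x_n$ lies in the basic neighbourhood $U_{x^\ast,\{q\},r}=\{y\colon q(y-x^\ast)<r\}$, contradicting that $x^\ast$ is a cluster point of $(x_n)$. Hence every compact subset of $E$ is finite.

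I expect the main obstacle to be precisely this reduction to a single seminorm. The stable topology is generated by suprema over stably finite subfamilies, so separating the countably many $x_n$ from $x^\ast$ simultaneously genuinely requires an infinite gluing; this is legitimate only because $\mathscr{P}$ is stable, and the pasted seminorm is guaranteed to be nonzero at every $x_n$ only because atomlessness supplies the disjoint refinement $(D_n)$. Once this seminorm is available, Lemma~\ref{lem: clusterPoint} closes the argument at once. The converse direction is comparatively soft: its only content is that restricting to an atom collapses the $L^0$-scalars to $\mathbb{R}$ and the $L^0$-seminorms to ordinary seminorms, so that a classical convergent sequence survives inside $E$.
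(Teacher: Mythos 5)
Your proof is correct, and both implications are handled by routes that differ in a worthwhile way from the paper's. In the hard direction the paper also extracts an injective sequence with a cluster point, separates each $x_n$ from the cluster point by some $p_n\in\mathscr{P}$, and feeds the resulting sequence into Lemma~\ref{lem: clusterPoint}; but instead of pasting the $p_n$ along a partition it forms the Fr\'echet-type combination $\rho=\sum_n 2^{-n}\,p_n/(1+p_n)$, which is automatically nonzero at every $x_n-x^\ast$ (no disjointification needed) at the cost of a separate argument that the $\rho$-ball is a $\mathscr{T}$-neighbourhood, carried out by cutting the tail of the series along a partition and exhibiting a stably finite $\mathscr{N}\subset\mathscr{P}$ with $U_{x_0,\mathscr{N},s}\subset V_r(x_0)$. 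Your pasted seminorm $q=\sum_n 1_{A_n}p_n$ lands directly in $\mathscr{P}$ by stability, so its ball is a basic neighbourhood with no further work; the price is the disjoint refinement $D_n\subseteq C_n$, which is legitimate because $C_n\subseteq\operatorname{supp}(E)$ is atomless in this direction (and is the same exhaustion that already lives inside Lemma~\ref{lem: clusterPoint}, so your argument runs it twice --- harmless, if slightly redundant). You correctly identify this pasting as the point where stability of $\mathscr{P}$ is indispensable. In the converse direction the paper restricts to the one-dimensional real subspace $\{1_A r x_0\colon r\in L^0\}$, observes that on an atom the $L^0$-seminorms collapse to real seminorms via $p\mapsto\mathbb{E}[p(1_A\,\cdot\,)]$, and invokes the classical Heine--Borel theorem, whereas you exhibit the explicit convergent sequence $\{n^{-1}1_Ax_0\}\cup\{0\}$; these are the same phenomenon, yours being the more elementary and self-contained, the paper's making the structural collapse to $\mathbb{R}$ explicit. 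One small point worth making explicit in your write-up: the inclusion $\{\sup_n r_n>0\}\subseteq\operatorname{supp}(E)$ uses that $1_{\operatorname{supp}(E)^c}x=0$ for every $x\in E$, i.e.\ that the infimum defining the support is attained, which holds by the countable chain condition; the paper uses the same fact tacitly.
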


\begin{proof}
Let $\mathscr{P}$ be a stable collection  of $L^0$-seminorms which induce $\mathscr{T}_s$. 
For the sake of contradiction, suppose that $K\subset E$ is an infinite compact set. 
Then there exists an injective sequence $(x_n)$ in $K$ which has a cluster point $x_0\in K$. 
We assume w.l.o.g.~that $x_0\neq x_n$ for all $n\in\N$. 
Since $\mathscr{P}$ is separated, for every $n\in\N$ there is $p_n\in \mathscr{P}$ such that $p_n(x_0-x_n)\neq 0$. 

Now, define $\rho:E\times E\rightarrow L^0$ by 
\[
\rho(x,y):=\sum_{n\in\N}\frac{1}{2^n}\frac{p_n(x-y)}{1+p_n(x-y)}.
\] 
Then $\rho(x_n,x_0)\neq 0$ for all $n\in\N$ and $\{\sup_n\rho(x_n,x_0)>0\}\subset\text{supp}(E)$. 
By Lemma \ref{lem: clusterPoint}, there is $r\in L^0_{++}$ such that $\mathbb{P}(\rho(x_n,x_0)\geq r)>0$ for all $n\in\N$.
Let 
\[
V_r(x_0):=\left\{ y\in E\colon \rho(x_0,y)< r \right\}, 
\]
and notice that $x_n\notin V_r(x_0)$ for all $n\in\N$. 
If we would show that $V_r(x_0)$ is a  neighbourhood  of $x_0$, then we obtain the desired contradiction. 
First, observe that there exist $(a_k)\in  p(1)$ and $(n_k)\subset\N$ such that
\[
\sum_{k\in\N} 1_{A_k}\sum_{n> n_k} \frac{1}{2^n}\frac{p_n(x_0-y)}{1+ p_n(x_0-y)}< \frac{r}{2}\quad\textnormal{ for all }y\in E.  
\]
Second, let $s\in L^0_{++}$ be small enough such that if $1_{A_k}p_n(x_0-y)\leq 1_{A_k}s$ for $n<n_k$, then 
\[
\sum_{k\in\N} 1_{A_k}\sum_{1\leq n\leq n_k} \frac{1}{2^n}\frac{p_n(x_0-y)}{1+p_n(x_0-y)}< \frac{r}{2}.
\]
Put $n:=\sum_k 1_{A_k}n_k$ and $\mathscr{N}:=\{p_\m\colon \1\leq \m\leq\n\}\subset \mathscr{P}$. 
Then $U_{x_0,\mathscr{N},s}\subset V_r(x_0)$.

As for the converse, suppose that there exists an atom $A\subset\text{supp}(E)$. 
Notice that from stability of an $L^0$-seminorm $p$ it follows that $1_Bp(1_B x)=p(1_B x)$ for all $x\in E$ and $B\in \mathcal{F}$. 
Thus, since $A$ is an atom, for all $p\in\mathscr{P}$ and $x\in E$ there exists $\eta_{x,p}\in \R_+$ such that $p(1_A x)=1_A \eta_{x,p}$. 
Let $\hat{p}:E\rightarrow \R$ be defined by $\hat{p}(x):=\mathbb{E}[p(1_A x)]$ for each $p\in \mathscr{P}$. 
Since $\mathscr{P}$ is separated, there exists $x_0\in E$ and $p_0\in \mathscr{P}$ such that $\hat{p}_0(x_0)>0$. 
Then $\hat{\mathscr{P}}:=\{\hat{p}\colon p\in \mathscr{P}\}$ is a set of real-valued seminorms which induces on the $1$-dimensional vector space $E_0:=\{1_A  r x_0\colon r\in L^0\}$ a  locally convex Hausdorff topology. 
By the Heine-Borel theorem, the unit ball $K$ of $E_0$ is infinite compact. 
Since $\mathscr{P}$ and $\hat{\mathscr{P}}$ induce the same topology on $E_0$, $K$ is also compact in $E$. 
\end{proof}

\begin{remark}
Next, we introduce the notions of stable topology, stable filter, stable compactness and stable metric on a stable subset of an $L^0$-module. 
These definitions make also sense on stable sets of functions as defined in \cite{jamneshan2017measures}, or more generally on the carrier of a conditional set. 
A stable structure can be viewed as an interpretation of a corresponding conditional structure in classical set theory. The definitions of a conditional filter, conditional ultrafilter, conditional compactness and conditional metric are introduced in \cite{DJKK13}, where the relation between classical and conditional topology is studied systematically, in which the following definitions of stable structures are  implicitly applied.  
\end{remark}

\begin{definition}
A topology on a stable subset $S$ of an $L^0$-module is said to be \emph{stable} if it admits a topological base which is a stable collection of stable sets.\footnote{It can straightforwardly be checked that if $\mathscr{T}$ is a stable topology, then $\mathscr{T}$ is a stable collection of not necessarily stable sets. Notice that, in general, the union of stable sets in not stable.} 
\end{definition}

\begin{definition}
A filter on a stable subset $S$ of an $L^0$-module is said to be \emph{stable} if it admits a filter base which is a stable collection of stable sets. 
 A stable filter is called a \emph{stable ultrafilter} if it is a maximal element in the set of all stable filters on $S$. 
 \end{definition}
 
\begin{definition}
Let $\mathscr{T}$ be a stable topology on $S$. 
Then $S$ is said to be \emph{ stable  compact} if every stable filter $\mathscr{F}$ on $S$ has a cluster point in $S$. 
\end{definition}

Similarly to classical compactness, we can prove the following characterizations of stable compactness. 
 
\begin{proposition}\label{thm: StCompactness}
Let $\mathscr{T}$ be a stable topology on $S$. Then the following are equivalent. 
\begin{itemize}
\item[(i)] $S$ is  stable  compact. 
\item[(ii)] Every stable ultrafilter on $S$ has a cluster point. 
\item[(iii)] For every stable collection $\mathscr{O}$ of stable and open sets with $S=\cup \mathscr{O}$ there exists a  stable  finite subcollection $\tilde{\mathscr{O}}\subset\mathscr{O}$ such that $S=\cup \tilde{\mathscr{O}}$. 
\item[(iv)] Every stable collection $\mathscr{C}$ of stable and closed sets in $S$ with $\cap \tilde{\mathscr{C}}\neq\emptyset$ for every  stable  finite subcollection $\tilde{\mathscr{C}}\subset\mathscr{C}$ satisfies $\cap \mathscr{C}\neq\emptyset$.
\end{itemize}
\end{proposition}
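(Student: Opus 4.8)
The plan is to adapt the classical cyclic proof (via filters, ultrafilters, the finite intersection property, and open covers) to the stable setting, replacing every classical notion by its stable counterpart. Two auxiliary facts do the bulk of the work. First, the \emph{stable ultrafilter lemma}: every stable filter on $S$ is contained in a stable ultrafilter; this is the classical interpretation of the existence of conditional ultrafilters from \cite{DJKK13}, and can be obtained by applying Zorn's lemma to the set of stable filters refining a given one, the only delicate point being that the union of a chain of stable filters is again stable, for which the countable chain condition on $\mathcal{A}$ is used to reduce a countable concatenation to members of the chain. Second, I would record that the cluster points of a stable filter $\mathscr{F}$ with stable base $\mathscr{B}$ are precisely the points of $\cap_{B\in\mathscr{B}}\text{cl}(B)$, together with the fact that the closure of a stable set is again stable (hence stable closed); both follow from the correspondence between the stable and the conditional closure \cite[Proposition 3.7]{DJKK13}.

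With these in hand, $(i)\Leftrightarrow(ii)$ is immediate: $(i)\Rightarrow(ii)$ because a stable ultrafilter is a stable filter, and $(ii)\Rightarrow(i)$ because a cluster point of a stable ultrafilter $\mathscr{U}\supseteq\mathscr{F}$ lies in $\cap_{U}\text{cl}(U)\subseteq\cap_{F}\text{cl}(F)$ and is therefore a cluster point of $\mathscr{F}$. For $(i)\Leftrightarrow(iv)$ I would set up the standard dictionary between stable filters and stable collections of stable closed sets with the stable finite intersection property: given $\mathscr{F}$ as in $(i)$, the family $\{\text{cl}(B):B\in\mathscr{B}\}$ is a stable collection of stable closed sets whose stable finite subcollections have nonempty intersection (each contains a finite intersection of base elements), so $(iv)$ yields a common point, which is a cluster point; conversely, given $\mathscr{C}$ as in $(iv)$, the stable hull of the stable finite intersections of members of $\mathscr{C}$ is a stable base of a stable filter, whose cluster point furnished by $(i)$ lies in every (closed) member of $\mathscr{C}$. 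Crucially, neither direction uses complementation.

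It remains to link $(iii)$ to the rest. For $(iii)\Rightarrow(i)$ I would argue directly by a gluing argument: if a stable filter $\mathscr{F}$ had no cluster point, then each $x\in S$ would have a stable open neighbourhood $U_x$ (from the stable base of $\mathscr{T}$) disjoint from some $B_x\in\mathscr{B}$; the stable hull $\mathscr{O}:=\text{st}(\{U_x:x\in S\})$ is then a stable open cover of $S$, so by $(iii)$ a stable finite subcollection covers $S$. Decomposing this stable finite subcover along a partition $(a_k)\in p(1)$ into finitely many of the $U_x$ on each $A_k$ and gluing the corresponding $B_x$ produces a stable finite intersection of filter elements that is empty, contradicting the stable finite intersection property of $\mathscr{F}$.

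The hard part is the reverse passage to $(iii)$, that is $(iv)\Rightarrow(iii)$ (equivalently $(i)\Rightarrow(iii)$), because it is exactly here that De Morgan duality is needed and \emph{classical complementation does not preserve stability}: for instance $L^0_{++}$ is a stable open subset of $E=L^0$ whose set-theoretic complement $\{r\colon\mathbb{P}(r\leq 0)>0\}$ is not stable. The remedy I propose is to replace the set-theoretic complement by the stable (conditional) complement $O^{\complement}:=\{x\in S\colon\text{no }0<a\text{ satisfies }1_A x\in 1_A O\}$, which is stable and, being the carrier of the conditional complement of a conditional open set, closed. One then verifies the conditional De Morgan identity $\cap_{m} O_m^{\complement}=(\text{st}(\cup_{m} O_m))^{\complement}$ for stable finite families, so that a stable cover $\mathscr{O}$ with no stable finite subcover yields, via $\{O^{\complement}\colon O\in\mathscr{O}\}$, a stable collection of stable closed sets with the stable finite intersection property and empty total intersection, contradicting $(iv)$. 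I expect the genuine difficulty to be the verification that a stable finite family failing to cover $S$ has nonempty conditional complement, i.e.\ that the classical cover condition in $(iii)$ matches the conditional cover condition; this is where I would lean on the correspondence between stable and conditional compactness through \cite[Propositions 3.5 and 3.7]{DJKK13}, thereby reducing the proposition to the conditional compactness equivalences available in conditional set theory.
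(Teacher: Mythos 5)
Your proposal is correct and takes essentially the same route as the paper, whose entire proof is the remark that the claim follows by mirroring \cite[Proposition 3.25]{DJKK13} (the conditional version of these four equivalences) together with an adaptation of \cite[Theorem 3.16]{DJKK13} (existence of conditional ultrafilters, i.e.\ your stable ultrafilter lemma); your text is a faithful expansion of exactly that pointer. The one point you single out as genuinely delicate --- that set-theoretic complements of stable sets need not be stable, so the passage between (iii)/(iv) and the filter conditions must run through conditional complements and the correspondence of \cite[Propositions 3.5, 3.7]{DJKK13} --- is precisely the content the paper leaves implicit in its appeal to the conditional machinery.
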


\begin{proof}
The claim can be established similarly to \cite[Proposition 3.25]{DJKK13} by applying \cite[Proposition 3.25]{DJKK13} and a straightforward adaptation of  \cite[Theorem 3.16]{DJKK13} to the present context. 
\end{proof}

To be able to prove a Tychonoff's theorem, one needs a new notion of product topology since the product of a family of stable topologies is not necessarily a stable topology, see \cite[Example 1.1]{OZ2017stabil}. 
This motivates the following construction.\\

\begin{example}
Let $(S_i,\mathscr{T}_i)$ be a non-empty family of stable topological spaces with base $\mathscr{B}_i$ for each $i$. 
The collection $\mathscr{B}$ of  all $\sum_{k}1_{A_k} \prod U_i^k$, where $U_i^k=S_i$ for all but finitely many $U^k_i\in \mathscr{B}_i$ and $(a_k)\in p(1)$, is a stable collection of stable sets and the base for a stable topology on $\prod S_i$, referred to as the \emph{stable product topology}. 
If $\mathcal{B}$ is the conditional collection of conditional subsets of $\prod_i\textbf{S}_i$ induced by stable sets in $\mathscr{B}$, then the conditional topology generated by $\mathcal{B}$ is the conditional product topology on  $\prod_i\textbf{S}_i$, for a definition see \cite[Examples 3.9(2)]{DJKK13}. 
\end{example}

The conditional Tychonoff's theorem \cite[Theorem 3.28]{DJKK13} yields the following 

\begin{theorem}
Let $(S_i,\mathscr{T}_i)_{i\in I}$ be a non-empty family of stable topologies. 
Then the Cartesian product $\prod_i S_i$ is  stable  compact w.r.t.~the stable product topology if and only if $S_i$ is  stable  compact for all $i\in I$. 
\end{theorem}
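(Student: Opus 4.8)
The plan is to reduce the stable statement to its conditional counterpart and invoke the conditional Tychonoff theorem \cite[Theorem 3.28]{DJKK13}. The bridge is the example immediately preceding the statement: the stable product topology on $\prod_i S_i$ is built from a base $\mathscr{B}$ which is a stable collection of stable sets, and the conditional collection $\mathcal{B}$ induced by $\mathscr{B}$ generates precisely the conditional product topology on $\prod_i \mathbf{S}_i$. The whole argument therefore rests on the systematic correspondence of \cite[Section 3]{DJKK13}: a stable topology corresponds to a conditional topology (via \cite[Proposition 3.5]{DJKK13}), and stable compactness is the classical interpretation of conditional compactness. So the claim will follow once both sides of the desired equivalence are matched against the two sides of the conditional Tychonoff theorem.

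Concretely, first I would record that each factor $(S_i,\mathscr{T}_i)$ induces a conditional topological space $(\mathbf{S}_i,\mathcal{T}_i)$, and that, by the preceding example, $\prod_i \mathbf{S}_i$ equipped with the conditional topology generated by $\mathcal{B}$ is the conditional product $\prod_i \mathbf{S}_i$ in the sense of \cite[Examples 3.9(2)]{DJKK13}, whose underlying stable topology is exactly the stable product topology. Second, I would invoke the identification between stable and conditional compactness: by Proposition \ref{thm: StCompactness} (or directly by the definition of stable compactness as the interpretation of conditional compactness), a stable topological space is stable compact if and only if the associated conditional space is conditionally compact. Applying this to the product space and to each factor turns the statement into: $\prod_i \mathbf{S}_i$ is conditionally compact if and only if each $\mathbf{S}_i$ is conditionally compact.

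That last biconditional is precisely the conditional Tychonoff theorem \cite[Theorem 3.28]{DJKK13}, so the proof closes. I would phrase the writeup as: pass to the conditional picture, apply \cite[Theorem 3.28]{DJKK13}, and pass back, citing Proposition \ref{thm: StCompactness} and the preceding example for the two translations.

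The main obstacle, and the point I would be most careful to nail down, is the compatibility of the two translation steps, namely that the conditional space associated to the \emph{stable product} topology coincides with the \emph{conditional product} of the associated conditional spaces. This is not a triviality: the product of stable topologies need not be stable (as the remark before the example warns, citing \cite[Example 1.1]{OZ2017stabil}), which is the whole reason the ad hoc stable product topology was introduced. Hence I must verify that the base $\mathscr{B}$ of the stable product topology generates, under the carrier-to-conditional functor of \cite[Proposition 3.5]{DJKK13}, the same conditional topology as the conditional product base in \cite[Examples 3.9(2)]{DJKK13}; once this identification is secured, the rest is a direct appeal to the cited conditional theorem with no further analytic content.
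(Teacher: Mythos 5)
Your proposal matches the paper's approach exactly: the paper derives the theorem directly from the conditional Tychonoff theorem \cite[Theorem 3.28]{DJKK13}, using the preceding example to identify the stable product topology with the (carrier of the) conditional product topology. Your additional care about the compatibility of the two translations is a sensible elaboration of the same argument, not a different route.
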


Next, we study compactness in stable metric spaces. 

\begin{definition}
Let $E$ be an $L^0$-module and $S\subset E$ stable. 
A \emph{stable metric} is a stable function $d:S\times S\rightarrow L^0_+$ such that for all $x,y,z\in S$ it holds
\begin{itemize}
\item[(i)]$d(x,y)=0$ if and only if $x=y$, 
\item[(ii)] $d(x,y)=d(y,x)$, 
\item[(iii)] $d(x,z)\leq d(x,y)+d(y,z)$. 
\end{itemize}
\end{definition}

The collection of all $B_r(x):=\{y\in S\colon d(x,y)<r\}$, where $r\in L^0_{++}$ and $x\in S$, forms a stable collection of stable sets and the base for a stable topology on $S$. 
We introduce the following definitions. 
\begin{itemize}
\item A stable sequence  $(x_\n)$ in stable metric space is said to be \emph{Cauchy} if for every $r\in L^0_{++}$ there exists $\n_0\in L^0_s(\N)$ such that $d(x_\n,x_\m)\leq r$ for all $\n,\m\geq \n_0$. 
\item A stable metric space is said to be  
\begin{itemize}
\item \emph{stable  complete} if every Cauchy stable sequence is convergent;\footnote{Here we understand convergence as convergence of $(x_\n)$ as a net.}
\item \emph{stable  sequentially compact} if for every stable sequence there exists a convergent stable subsequence;\footnote{A stable subsequence $(y_\n)$ is said to be a stable subsequence of $(x_\n)$ if there exists an increasing stable sequence $(\n_k)$ with $y_{k}=x_{\n_k}$ for all $k\in L^0_s(\mathbb{N})$.} 
\item  \emph{bounded} if $\sup\{d(x_0,x)\colon x\in S\}<\infty$ for some $x_0\in S$;
\item  \emph{stable  totally bounded} if for every $r\in L^0_{++}$ there exists a  stable  finite subset $\tilde{S}\subset S$ such that $S=\cup_{x\in\tilde{S}} B_r(x)$;  
\item  \emph{stable  separable} if there exists a stable sequence $(x_\n)$ in $S$ such that for every $x\in S$ there exists a stable subsequence $(x_{\m})$ of $(x_\n)$ such that $x=\lim x_{\m}$. 
\end{itemize}
\end{itemize}

We have the Heine-Borel theorem for stable metric spaces whose proof can be carried out similarly to the proof of \cite[Theorem 4.6]{DJKK13}. 

\begin{theorem}\label{thm: GenHeineB}
Let $S$ be a stable metric space. 
Then the following are equivalent. 
\begin{itemize}
\item[(i)] $S$ is  stable  compact. 
\item[(ii)] If $(R_\n)$ is a decreasing stable family of stable and closed sets in $S$, then $\cap R_\n\neq\emptyset$. 
\item[(iii)] $S$ is  stable  sequentially compact. 
\item[(iv)]  $S$ is  stable  totally bounded and  stable  complete. 
\end{itemize}
\end{theorem}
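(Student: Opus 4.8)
The plan is to prove the cycle of implications $(i)\Rightarrow(ii)\Rightarrow(iii)\Rightarrow(iv)\Rightarrow(i)$, throughout transferring the classical Heine-Borel argument to the stable setting by replacing ordinary finiteness with stable finiteness, ordinary sequences with stable sequences indexed by $L^0_s(\N)$, and ordinary $\epsilon$-arguments with $r\in L^0_{++}$. Wherever a classical proof would extract a subsequence or choose a covering, I will invoke an exhaustion/gluing argument over $\mathcal{A}$: since $\mathcal{A}$ satisfies the countable chain condition, the set of $a\in\mathcal{A}$ on which a given construction succeeds is closed under countable suprema, so a maximal element is attained and, by the definition of \emph{stable} support $a_\ast=1$ can be assumed. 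This is precisely the mechanism already used in the proof of Theorem~\ref{t:basis} and in Proposition~\ref{p:anti-compact}, and I would reuse it verbatim in spirit.

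\textbf{First}, $(i)\Rightarrow(ii)$ is immediate since a stable ultrafilter is in particular a stable filter, and $(i)$ asserts every stable filter clusters. For $(ii)\Rightarrow(iii)$ I would argue by contradiction: given a stable open cover $\mathscr{O}$ with no stable finite subcover, the stable collection of complements generates (via finite stable intersections) a stable filter base with the finite stable intersection property; extend it to a stable ultrafilter using a Zorn's lemma argument on stable filters (as in Proposition~\ref{thm: StCompactness}, which already records the equivalence of $(i)$--$(iv)$ there and may be cited). By $(ii)$ this ultrafilter has a cluster point $x_0$, which lies in some $O\in\mathscr{O}$; stability of $O$ and the ultrafilter property force $O$ to belong to the filter, contradicting that $O^c$ was in the generating base on a set of positive measure. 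The subtle point is that ``no stable finite subcover'' must be read locally: one shows $a_\ast:=\vee\{a:\ \text{some stable finite subfamily covers }S\text{ on }a\}$ equals $1$ by the exhaustion argument, and the contradiction is obtained on the complement.

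\textbf{Next}, $(iii)\Rightarrow(iv)$: stable total boundedness follows because the cover $\{B_r(x):x\in S\}$ admits a stable finite subcover for each $r\in L^0_{++}$; stable completeness follows because a Cauchy stable sequence defines a decreasing stable family of closed balls whose stable-finite intersections are nonempty, so by $(iii)$ (equivalently the finite-intersection form $(iv)$ of Proposition~\ref{thm: StCompactness}) the intersection is nonempty and yields the limit. Then $(iv)\Rightarrow(i)$ is the heart of the matter and mirrors the classical proof that totally bounded plus complete implies compact: given a stable sequence $(x_\n)$, use stable total boundedness to cover $S$ by stably finitely many balls of radius $1$, pass to a stable subsequence lying in one glued ball (choosing the indices by an exhaustion over $\mathcal{A}$ so that the choice is stable), then iterate with radii $1/2,1/3,\ldots$ to build a stable diagonal subsequence that is Cauchy; stable completeness gives its limit, establishing stable sequential compactness $(iii')$, and clustering of an arbitrary stable filter follows. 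Equivalence with statement $(ii)$ of the theorem, the nested-closed-sets formulation, is the stable analogue of Cantor's intersection theorem and drops out of the Cauchy construction.

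\textbf{The main obstacle} I anticipate is the diagonal extraction in $(iv)\Rightarrow(i)$: classically one freely picks one ball containing infinitely many terms, but here the ``ball'' and the ``infinitely many indices'' must both be produced \emph{stably}, i.e.~the selection must commute with concatenation along any $(a_k)\in p(1)$. The correct device is to define, for each candidate ball $B$, the element $a_B\in\mathcal{A}$ supporting the event that $(x_\n)$ returns to $B$ infinitely often, glue the chosen balls along a partition refining the finitely many balls of the cover, and invoke the countable chain condition to keep the construction countable and the resulting index sequence $(\n_k)$ a genuine stable sequence in $L^0_s(\N)$. Because Definition~\ref{d:basis}'s stable-finiteness decomposition guarantees every stable finite family splits as $\sum\mathrm{st}(X_k)|a_k$ with each $X_k$ honestly finite, the per-piece argument is classical and only the gluing requires care; I would carry this out by direct reference to the template of \cite[Theorem 4.6]{DJKK13} as the statement already suggests.
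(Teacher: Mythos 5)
Your overall strategy---transferring the classical four-way equivalence by replacing finiteness with stable finiteness, sequences with stable sequences indexed by $L^0_s(\N)$, and handling selections by exhaustion over $\mathcal{A}$ using the countable chain condition---is exactly the route the paper intends (it simply defers to the proof of the conditional Heine--Borel theorem in \cite[Theorem 4.6]{DJKK13}). However, as written your cycle does not close, because you have conflated the items of Theorem \ref{thm: GenHeineB} with those of Proposition \ref{thm: StCompactness}. The argument you give for $(i)\Rightarrow(ii)$ (``a stable ultrafilter is in particular a stable filter'') proves that stable ultrafilters cluster, not that decreasing stable families of closed sets have nonempty intersection (the latter is easy, but via Proposition \ref{thm: StCompactness}(iv) applied to the filter base $\{R_\n\}$). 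Your argument for $(ii)\Rightarrow(iii)$ is an open-cover/ultrafilter argument whose hypothesis and conclusion are both items of the Proposition, not of the Theorem: it neither uses the nested-closed-sets hypothesis nor produces a convergent stable subsequence. What is actually needed there---setting $R_\n:=\text{cl}\{x_\m\colon \m\geq\n\}$ for a given stable sequence, taking $x\in\cap R_\n$, and using the metric to extract a stable subsequence converging to $x$---does not appear.

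The same slippage breaks the later steps. In $(iii)\Rightarrow(iv)$ you obtain total boundedness from ``the cover $\{B_r(x)\colon x\in S\}$ admits a stable finite subcover,'' but that is the covering form of stable compactness, which is not available at that point in the cycle; from sequential compactness one must instead argue by contradiction, constructing a stable sequence whose terms are pairwise at distance at least $r$ on a set of positive measure and showing it admits no convergent stable subsequence. Likewise you derive completeness by invoking Proposition \ref{thm: StCompactness}(iv), i.e.\ full stable compactness, whereas the available argument is simply that a Cauchy stable sequence with a convergent stable subsequence converges. Finally, in $(iv)\Rightarrow(i)$ you only sketch $(iv)\Rightarrow(iii)$ and then assert that ``clustering of an arbitrary stable filter follows''; the passage from stable sequential compactness back to the filter (or covering) form of stable compactness is one of the genuinely nontrivial steps---classically it requires a Lebesgue-number-type argument or a direct construction from total boundedness---and it is precisely where the stable selection issues you correctly flag must be resolved. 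Your closing discussion of the diagonal extraction is sensible, but until each implication is argued from its stated hypothesis to its stated conclusion, the proof is incomplete.
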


If the underlying $L^0$-module $E$ is  stable  finitely generated, then we have:

\begin{corollary}\label{thm: HeineBorel}
Let $(E,\Vert\cdot\Vert)$ be a  stable  finitely generated $L^0$-normed module. 
Then the stable unit ball $\{ x\in E \colon\Vert x\Vert\leq 1 \}$ is  stable  compact.
\end{corollary}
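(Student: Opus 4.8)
The plan is to reduce the statement to a standard model via the structure theory of stable finitely generated modules and then invoke the Heine–Borel theorem for stable metric spaces (Theorem \ref{thm: GenHeineB}). By the structure corollary for stable finitely generated modules, there is a module isomorphism $E \cong (L^0)^{\n}$ for a stable dimension $\n = \sum_k 1_{A_k} n_k$ with $(a_k) \in p(1)$ and $(n_k) \subset \mathbb{N}$. Since stable compactness is local — if $S = \sum_k 1_{A_k} S_k$ and each $1_{A_k}S_k$ is stable compact then so is $S$, which follows from the open-cover characterization (iii) of Proposition \ref{thm: StCompactness} by concatenating the subcovers obtained on each $A_k$ — it suffices to treat each piece $1_{A_k}E \cong (1_{A_k}L^0)^{n_k}$ separately and then glue. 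Thus I may assume the rank is a constant integer $n$ and work with the stable metric $d(x,y) = \|x-y\|$ on $(L^0)^n$, noting that the unit ball $B = \{x : \|x\| \le 1\}$ is a stable set (from $\|1_A x\| = 1_A\|x\|$) and is stably closed.

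Next I would dispose of the choice of norm. As in the classical finite-dimensional case, all $L^0$-norms on a free module of constant rank $n$ induce the same topology (see \cite{Cheridito2012}); in particular the module isomorphism above is a homeomorphism, and $B$ is sandwiched between two Euclidean balls. Consequently $B$ is a stably closed, coordinatewise bounded subset contained in a box $[-c,c]^n := \{x \in (L^0)^n : |x_i| \le c \text{ for all } i\}$, and it suffices to prove that this box is stable compact, since a stably closed subset of a stable compact space is again stable compact by the filter characterization of Proposition \ref{thm: StCompactness}.

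To handle the box I would first treat the one-dimensional factor $[-c,c] \subset L^0$ using characterization (iv) of Theorem \ref{thm: GenHeineB}: stable total boundedness together with stable completeness. Completeness is inherited from the stable (conditional) completeness of $L^0$, since a Cauchy stable sequence converges and a stably closed subset of a stable complete space is stable complete. The essential point is stable total boundedness for an \emph{arbitrary} random radius $r \in L^0_{++}$: one cannot use a single integer mesh, so I would exploit the countable chain condition. The sets $\{r > 1/N\}$, $N \in \mathbb{N}$, exhaust $\Omega$ up to a null set and yield a partition $(b_N) \in p(1)$ on which $r > 1/N$; on $b_N$ a deterministic grid of roughly $2cN$ equally spaced points $r$-covers $[-c,c]$, and concatenating these finite grids along $(b_N)$ produces a \emph{stable finite} set of centres whose $r$-balls cover $[-c,c]$. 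Stable compactness of $[-c,c]^n$ then follows from the stable Tychonoff theorem proved above applied to the finite stable product (whose stable product topology agrees with the norm topology in the finitely many coordinates), and finally $B$ is stable compact as a stably closed subset of a stable compact set.

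The main obstacle is precisely this conversion of the classical, radius-dependent finite covers into a single \emph{stably finite} cover valid for a random radius $r \in L^0_{++}$; this is where the countable chain condition and the notion of stable finiteness do the real work, and it is the step that genuinely separates stable compactness from ordinary compactness (which fails here by Proposition \ref{p:anti-compact}). A secondary technical matter is bookkeeping: one must verify that the isomorphism $E \cong (L^0)^{\n}$ transports the norm topology to the stable/product topology and that every construction respects concatenation along $(a_k)$, so that the pieces of constant rank reassemble into a stable compact unit ball in $E$.
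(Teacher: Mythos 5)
Your proof is correct and takes the route the paper intends: the corollary is presented as an immediate consequence of Theorem \ref{thm: GenHeineB}, and your reduction to $(L^0)^{\n}$ via the structure corollary and norm equivalence from \cite{Cheridito2012}, followed by the verification of stable completeness and stable total boundedness (with the finite grids concatenated along a countable partition adapted to the random radius $r\in L^0_{++}$, which is indeed where the countable chain condition does the real work), is exactly the content the paper leaves implicit. No gaps.
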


An example of a finitely generated $L^0$-normed module is $(L^0)^d$, $d\in\N$, which is endowed with the Euclidean $L^0$-norm $\Vert x\Vert=( \sum_{i=1}^d x_i^2)^{1/2}$.  
A subset $S\subset (L^0)^d$ is said to be \emph{sequentially closed} if $x\in S$ whenever $(x_n)\subset S$ and $\lim_n x_n=x$ a.e.   
In this case, we have the following characterizations of stable compactness. 

\begin{theorem}
Let $K$ be a subset of $(L^0)^d$. 
Then the following are equivalent. 
\begin{itemize}
\item[(i)] $K$ is  stable  compact. 
\item[(ii)] $K$ is stable, closed and  bounded. 
\item[(iii)] $K$ is stable, sequentially closed and bounded.\footnote{In the context of the conditional analysis on $\R^d$, this equivalent characterization of stable compactness  is applied to prove that a stable sequentially lower semi-continuous function attains its minimum on a stable, sequentially closed and  stable  bounded set, see \cite[Theorem 4.4]{Cheridito2012}. This result was subsequently applied in e.g.~\cite{kupper08,JKZ2017control} to solve stochastic control problems.}
\item[(iv)] $K$ is the set of measurable selectors of a non-empty measurable compact-valued mapping $S_K:\Omega\rightrightarrows \R^d$.\footnote{In \cite[Theorem 4.2]{JKZ2017control}, a characterization of measurable closed-valued mappings $S:\Omega\rightrightarrows \R^d$ in terms of stable sequentially closed subsets of $(L^0)^d$ is proved, which according to the analysis in this article yields a one-to-one correspondence between conditional closed subsets of a conditional Euclidean space and measurable closed-valued maps in $\mathbb{R}^d$.}  
\end{itemize}
In this case, the mapping $S_K:\Omega\rightrightarrows \R^d$ is unique modulo a.e.~equality.  
\end{theorem}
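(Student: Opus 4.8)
The plan is to prove the cycle $(\mathrm{i})\Leftrightarrow(\mathrm{ii})\Leftrightarrow(\mathrm{iii})\Leftrightarrow(\mathrm{iv})$, handling the metric/topological equivalences first and the measurable-selection characterization last, and then deducing uniqueness from a Castaing representation. For $(\mathrm{i})\Leftrightarrow(\mathrm{ii})$ I would regard $(L^0)^d$ as a stable metric space under $d(x,y)=\|x-y\|$ and combine Theorem~\ref{thm: GenHeineB} with Corollary~\ref{thm: HeineBorel}. For $(\mathrm{ii})\Rightarrow(\mathrm{i})$: boundedness of $K$ yields $R\in L^0_{++}$ with $K\subset\{x\colon\|x\|\leq R\}$; since multiplication by $R$ is a stable homeomorphism, the closed ball of stable radius $R$ is stable compact by Corollary~\ref{thm: HeineBorel}, and $K$ is a stable closed subset of it. It then remains to verify the stable analogue of ``a closed subset of a compact set is compact'', which I would read off directly from the finite-intersection form (iv) of Proposition~\ref{thm: StCompactness}: a stable family of stable sets closed in $K$ is a stable family of stable closed subsets of the ball with the stable finite intersection property, hence has nonempty intersection. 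For $(\mathrm{i})\Rightarrow(\mathrm{ii})$: stable compactness forces stable total boundedness via Theorem~\ref{thm: GenHeineB}(iv), hence boundedness; that $K$ is closed follows because a stable compact subset of a Hausdorff stable metric space is stable closed, and stable closed coincides with closed for stable sets by Remark~\ref{r:comparison}; stability of $K$ is built into the hypothesis.

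For $(\mathrm{ii})\Leftrightarrow(\mathrm{iii})$ the key observation is that on $(L^0)^d$ the $(\epsilon,\lambda)$-topology is the metrizable topology of convergence in probability, so its closure equals the sequential closure with respect to convergence in probability, and by Remark~\ref{r:comparison} this closure agrees with $\mathrm{cl}_s$ on stable sets. The implication $(\mathrm{ii})\Rightarrow(\mathrm{iii})$ is then immediate, as a.e.\ convergence implies convergence in probability. For $(\mathrm{iii})\Rightarrow(\mathrm{ii})$, any point of $\mathrm{cl}_s(K)$ is a limit in probability of a sequence in $K$; passing to an a.e.-convergent subsequence and invoking sequential closedness places the limit in $K$, so $K$ is closed.

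For $(\mathrm{iii})\Leftrightarrow(\mathrm{iv})$ I would invoke the correspondence of \cite[Theorem~4.2]{JKZ2017control} between stable sequentially closed subsets of $(L^0)^d$ and measurable closed-valued maps $S\colon\Omega\rightrightarrows\R^d$, under which $K$ is precisely the set of measurable selectors of $S_K$. The only extra content is matching boundedness with compact-valuedness. If $K$ is bounded, choose $R\in L^0_{++}$ with $\|f\|\leq R$ a.e.\ for every selector $f$, together with a Castaing sequence $(f_n)\subset K$ satisfying $S_K(\omega)=\mathrm{cl}\{f_n(\omega)\colon n\in\N\}$ a.e.; then each fibre $S_K(\omega)$ is closed and contained in the ball of radius $R(\omega)$, hence compact, while non-emptiness follows from $K\neq\emptyset$. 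Conversely, if $S_K$ is compact-valued then $\omega\mapsto\sup_{y\in S_K(\omega)}|y|$ is measurable and a.e.\ finite, which yields an $L^0_+$ bound on $K$.

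Uniqueness follows from the same representation: any measurable compact-valued map whose selector set is $K$ satisfies $S_K(\omega)=\mathrm{cl}\{f_n(\omega)\colon n\in\N\}$ a.e.\ for a fixed countable family $(f_n)$ of selectors dense in $K$, so two such maps agree a.e. The main obstacle I anticipate is exactly this bridge between the topological side and the pointwise/measurable side in $(\mathrm{iii})\Leftrightarrow(\mathrm{iv})$ and in the uniqueness claim: beyond citing \cite{JKZ2017control}, one must argue with a Castaing representation to transfer boundedness of the stable set into fibrewise compactness and to pin down $S_K$ up to a.e.\ equality, taking care that the countable selector family used is genuinely dense in the stable sense.
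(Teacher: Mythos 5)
Your proposal is correct, and for the topological equivalences it follows essentially the paper's route (indeed you supply some details the paper leaves implicit: the scaling of the unit ball from Corollary~\ref{thm: HeineBorel}, the closed-subset-of-compact argument via Proposition~\ref{thm: StCompactness}(iv), and the proof via convergence in probability of the fact that a stable set is sequentially closed iff closed, which the paper simply asserts for $(ii)\Leftrightarrow(iii)$). The genuine divergence is in $(iii)\Leftrightarrow(iv)$ and the uniqueness claim. You outsource the core correspondence between stable sequentially closed sets and measurable closed-valued maps to \cite[Theorem 4.2]{JKZ2017control} (which the paper only mentions in a footnote) and then only match boundedness with fibrewise compactness; the paper instead proves both directions from scratch: for $(iv)\Rightarrow(i)$ it verifies condition (ii) directly using the Kuratowski--Ryll-Nardzewski selection theorem and the Measurable Maximum Theorem from \cite{aliprantis01}, and for $(i)\Rightarrow(iv)$ it explicitly constructs a countable family whose stable hull is dense in $K$ (via rational centres $q_\n$ and near-minimizers $x(\n,\m)$) and applies Castaing's representation theorem. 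Your shortcut is legitimate but makes the theorem depend on an external result, whereas the paper's construction is self-contained and, as a by-product, produces exactly the dense selector family your uniqueness argument needs. For uniqueness, your double-Castaing inclusion argument ($S\subset S'$ a.e.\ because a Castaing sequence for $S$ consists of selectors of $S'$, and symmetrically) is clean and arguably simpler than the paper's contradiction argument via graph-measurability and the stable concatenation $z=1_Cx+1_{C^c}x_0$; just note, as you do, that the one-sided statement ``$S'(\omega)=\mathrm{cl}\{f_n(\omega)\}$ for the fixed family $(f_n)$'' really requires a Castaing representation of $S'$ as well (or density of the stable hull of $(f_n)$ in $K$) to get the inclusion $S'(\omega)\subset\mathrm{cl}\{f_n(\omega)\}$.
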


\begin{proof}
$(i)\Leftrightarrow(ii)$: Theorem \ref{thm: HeineBorel}.

$(ii)\Leftrightarrow(iii)$: Follows from the fact that a stable set $K\subset (L^0)^d$ is sequentially closed if and only if it is closed in the stable topology.   

$(iv)\Rightarrow (i)$: Let $S:\Omega\rightrightarrows \R^d$ be a measurable compact-valued mapping with $\text{dom}(S)=\Omega$. 
Then $K=\{ x\in (L^0)^d \colon x(\omega)\in S(\omega) \text{ for all }\omega\in \Omega\}$ is non-empty due to the Kuratowski-Ryll-Nardzewski theorem (see e.g.~\cite[Theorem 18.13]{aliprantis01}). 
By inspection, $K$ is stable. 
Since the Euclidean norm $\Vert\cdot\Vert:\R^d\rightarrow\R$ is continuous, by the Measurable Maximum Theorem (see e.g.~\cite[Theorem 18.19]{aliprantis01}), the function $$r:\Omega\rightarrow \R,\quad   r(\omega):=\underset{x\in S(\omega)}\max \Vert x\Vert$$ is an element of $L^0$. 
This shows that $K\subset B_r(0)$ which means boundedness. 
To show that $K$ is closed, let $x\in\text{cl}(K)$. 
There is $x_n\in K$ with $\Vert x-x_n\Vert\leq\frac{1}{n}$ for all $n\in\N$.  
 Let $\tilde{x}_n$ be a representative of $x_n$ each $n\in\N$. 
 Then $x$ has a representative $\tilde{x}$ such that $\Vert\tilde{x}(\omega)-\tilde{x}_n(\omega)\Vert\leq\frac{1}{n}$ for all $\omega\in\Omega$. 
 Therefore, $\tilde{x}(\omega)\in \text{cl}(S(\omega))=S(\omega)$ for all $\omega\in \Omega$ which implies $x\in K$. 
 
$(i)\Rightarrow(iv)$: Suppose that $K\subset (L^0)^d$ is  stable  compact. 
Enumerate $\mathbb{Q}^d:=\{q_1,q_2,\ldots\}$. 
Notice that $\mathbb{Q}^d$ can be considered as a subset of $L^0_s(\mathbb{Q}^d)$ via $q\mapsto 1_\Omega q$. 
 For each $n\in\N$, let $r_n:=\inf\{\Vert q_n-x\Vert\colon x\in K\}$. 
 For each $m\in\N$, since $K$ is stable, we find $x(n,m)\in K$ with $\Vert q_n- x(n,m)\Vert\leq r_n + \frac{1}{3 m}$. 
 For $\n,\m\in L^0_s(\N)$ of the form $\n=\sum_k 1_{A_k}n_k$ and $\m=\sum_h 1_{B_h}m_h$, define $r_\n:=\sum_k 1_{A_k}r_{n_k}$, $q_\n:=\sum_k 1_{A_k}q_{n_k}$ and $x(n,m):=\sum_{k,h} 1_{A_k\cap B_h}x_{n_k,m_h}$. 
 Then, 
 \begin{equation}
 \label{eq: xnm}
 \Vert q_\n - x(\n,\m)\Vert \leq r_\n + \frac{1}{3\m}\quad\textnormal{ for all }\n,\m\in L^0_s(\N).
\end{equation}   
Put $\mathscr{H}:=\{x(\n,\m)\colon \n,\m\in L^0_s(\N)\}$. 
 By stability, $\mathscr{H}\subset K$. 
 We claim that $\mathscr{H}$ is dense in $K$. 
 Indeed,  for any $x\in K$ and $\m\in L^0_s(\N)$, since $L^0_s(\mathbb{Q}^d)$ is dense in $(L^0)^d$, there exists $q_\n\in L^0_s(\Q^d)$ such that $\Vert q_\n- x\Vert\leq 1/3 \m$.
 Moreover, note that $r_{\n}\leq \Vert q_\n- x\Vert \leq 1/3 \m$. 
The previous two inequalities together with (\ref{eq: xnm}) yield 
$$\Vert x - x(\n,\m)\Vert\leq \Vert x - q_\n\Vert + \Vert q_\n - x(\n,\m)\Vert\leq \frac{1}{3\m} + \frac{1}{3\m} +\frac{1}{3\m}\leq \frac{1}{\m}.$$ 
Now enumerate $\N\times\N:=\{(n_h,m_h)\colon h\in\N\}$. 
 For each $h\in L^0_s(\N)$ of the form $h=\sum_k 1_{A_k}h_k$, let $y_{h}:=\sum_k 1_{A_k}x(n_{h_k},m_{h_k})$. 
Considering $\N$ as a subset of $L^0_s(\N)$ via $n\mapsto 1_{\Omega}n$, we thus obtain a sequence $(y_n)$. 
Since $K$ is  stable  bounded, we can choose a representative $\tilde{y}_n$ of $y_n$ such that $\{\tilde{y}_n(\omega)\colon n\in \N\}$ is bounded for all $\omega\in\Omega$.  
We define 
\[
S_K(\omega):=\text{cl}(\{ \tilde{y}_n(\omega) \colon n\in\mathbb{N}\}), \quad \omega\in\Omega. 
\]
By Castaing's representation theorem (see e.g.~\cite[Corollarly 18.14]{aliprantis01}), $S_K$ is a measurable. 
Moreover, $S_K$ is non-empty compact-valued. 
By inspection, $K$ is the set of equivalence classes of all measurable selectors of $S_K$. 

Finally, let us show that if $K$ is the set of equivalence classes of all measurable selectors of two measurable non-empty compact-valued mappings $S$ and $S^\prime$, then $S(\omega)=S^\prime(\omega)$ a.e. Indeed, since $S$ and $S^\prime$ are measurable, they are graph-measurable due to \cite[Theorem 18.6]{aliprantis01}. 
Thus $A_x:=\left\{\omega\in\Omega \colon x\in S(\omega)\right\}$ and $B_x:=\left\{\omega\in\Omega \colon x\in S^\prime(\omega)\right\}$ are measurable for all $x\in \R^d$. 
It suffices to prove that $\mathbb{P}(A_x\Delta B_x)=0$ for all $x\in \R^d$, where $\Delta$ denotes symmetric difference. 
 By contradiction, suppose that for instance $\mathbb{P}(A_x\setminus B_x)>0$ for some $x\in\R^d$. 
Let $C:=A_x\setminus B_x$ and choose $x_0\in K$. 
Since $K$ is stable, $z:=1_C x + 1_{C^c}x_0\in K$. 
Hence $z$ is a measurable selector of $K$,  and thus $\tilde{z}(\omega)\in S^\prime(\omega)$ for all $\omega\in \Omega$ for a representative $\tilde{z}$ of $z$, which is a contradiction since $\mathbb{P}(C)>0$.  
\end{proof}

Let $(S,\mathscr{T})$ be a stable topological space. 
A function $f:S\rightarrow\bar{L}^0$ is said to be lower semi-continuous if $\{x\in S\colon f(x)\leq\eta\}$ is closed for all $\eta\in \bar{L}^0$. 
The following result is an extension of \cite[Theorem 4.4]{Cheridito2012} to the infinite dimensional case. 

\begin{theorem}\label{thm: lowSemCompact}
Let $(K,\mathscr{T})$ be a stable topological space and $f:K\rightarrow L^0$ stable and lower semi-continuous. 
If $K$ is  stable  compact, then there exists $x_0\in K$ such that $f(x_0)=\min_{x\in K} f(x)$.  
\end{theorem}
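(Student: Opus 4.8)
The plan is to reduce the statement to the finite-intersection characterization of stable compactness in Proposition \ref{thm: StCompactness}(iv), applied to the sublevel sets of $f$. First I would record the two structural consequences of stability. Since $K$ is stable and $f$ is a stable function, for any $x,y\in K$ one may set $A:=\{f(x)\le f(y)\}$ and paste $z:=1_Ax+1_{A^c}y\in K$; stability of $f$ then gives $f(z)=1_Af(x)+1_{A^c}f(y)=f(x)\wedge f(y)$. Hence $D:=\{f(x)\colon x\in K\}\subset L^0$ is closed under finite infima, so it is downward directed. Likewise, for a fixed $\eta$ the sublevel set $\{x\in K\colon f(x)\le\eta\}$ is stable, and it is closed by lower semi-continuity.

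Next I would produce a minimizing sequence adapted to the stable setting. Because $\bar{L}^0$ is Dedekind complete and $\mathcal{A}$ satisfies the countable chain condition, the essential infimum $m:=\inf D\in\bar{L}^0$ is realized along a countable subfamily; using downward directedness of $D$, this subfamily can be arranged into a sequence $(x_n)\subset K$ with $f(x_n)\downarrow m$. For $\mathbf n=\sum_k1_{A_k}n_k\in L^0_s(\N)$ I set $x_{\mathbf n}:=\sum_k1_{A_k}x_{n_k}\in K$ and $d_{\mathbf n}:=f(x_{\mathbf n})=\sum_k1_{A_k}f(x_{n_k})\in L^0$, and I define the stable closed set $R_{\mathbf n}:=\{x\in K\colon f(x)\le d_{\mathbf n}\}$.

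Then I would verify that $\mathscr{C}:=\{R_{\mathbf n}\colon\mathbf n\in L^0_s(\N)\}$ is a stable collection of stable closed sets with the stable finite intersection property. Stability of the collection follows from the identity $\sum_k R_{\mathbf n_k}\,|\,a_k = R_{\mathbf m}$ with $\mathbf m=\sum_k1_{A_k}\mathbf n_k$, which uses stability of $K$ and $f$ to identify the concatenation of the sets $\{f\le d_{\mathbf n_k}\}$ with $\{f\le\sum_k1_{A_k}d_{\mathbf n_k}\}$. Each $R_{\mathbf n}$ is nonempty, since $x_{\mathbf n}\in R_{\mathbf n}$, and the family is nested, since $\mathbf n\le\mathbf m$ forces $d_{\mathbf m}\le d_{\mathbf n}$ and hence $R_{\mathbf m}\subset R_{\mathbf n}$; consequently every stable finite subcollection has nonempty intersection, its intersection being again some $R_{\mathbf N}$.

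Finally I would invoke stable compactness. By Proposition \ref{thm: StCompactness}(iv), $\bigcap\mathscr{C}\neq\emptyset$; choose $x_0\in\bigcap_{\mathbf n}R_{\mathbf n}\subset\bigcap_nR_n$. Then $f(x_0)\le d_n=f(x_n)$ for every $n$, whence $f(x_0)\le m$, while $x_0\in K$ and $m=\inf D$ being a lower bound give $f(x_0)\ge m$; thus $f(x_0)=m$. Since $f(x_0)\in L^0$ is finite, this simultaneously shows that $m\in L^0$ and that $m=\min_{x\in K}f(x)$ is attained at $x_0$. The main obstacle I anticipate is purely the stable bookkeeping: checking that the sublevel family is a genuine stable collection indexed by $L^0_s(\N)$ rather than by $\N$, that its stable finite intersection property matches the hypothesis of Proposition \ref{thm: StCompactness}, and that the finiteness of $m$ (rather than $m=-\infty$ on a set of positive measure) is forced only a posteriori by the existence of $x_0$.
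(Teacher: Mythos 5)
Your argument is essentially the paper's: both take the nested stable family of sublevel sets of $f$, which is a stable collection of stable closed subsets of $K$, and extract a common point via stable compactness --- you through the finite-intersection characterization of Proposition \ref{thm: StCompactness}(iv), the paper through the stable filter base $C_{n}=\{x\in K\colon f(x)\leq r+1/n\}$, $n\in L^0_s(\mathbb{N})$, with $r=\inf_{y\in K}f(y)$, and a cluster point of the induced stable filter. One small point in your favour: by indexing the sublevel sets by the values $f(x_{n})$ of a minimizing stable sequence rather than by $r+1/n$, you never need to know in advance that $r\in L^0$ rather than $-\infty$ on a set of positive measure --- finiteness of the infimum comes out a posteriori from the existence of $x_0$ --- whereas the paper's proof tacitly assumes it.
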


\begin{proof}
Define $r:=\inf_{y\in K} f(y)$ and 
\[
C_\n:=\left\{ x\in K\colon f(x) \leq r+\frac{\1}{\n}\right\}, \quad \n\in L^0_s(\N). 
\]
By inspection, $\{C_\n \colon \n\in L^0_s(\N)\}$ is a stable collection of stable and closed subsets of $K$. 
In fact, since $C_\n\cap C_\m=C_{\n\wedge \m}$, the collection $\{C_\n\colon \n\in L^0_s(\N)\}$ is a stable filter base on $K$. 
Denoting by $\mathcal{F}$ the induced stable filter, by stable compactness, $\mathcal{F}$ has a cluster point $x_0$. 
In particular, $x_0\in \text{cl}(C_\n)=C_\n$ for all $\n\in L^0_s(\N)$, and thus $f(x_0)=r$.   
\end{proof}

\section{Functional analysis in $L^0$-modules}\label{s:funcana}

In the remainder of this article, we will establish module variants of results in functional analysis. 
In doing so, the comparison results in Section \ref{s:sec2} are key which we recall in the following. 
Given a locally $L^0$-convex module $(E,\mathscr{T})$, according to the discussion in Remark \ref{r:comparison}, it uniquely defines two topologies $\mathscr{T}_{\epsilon,\lambda}$ and $\mathscr{T}_s$ with $\mathscr{T}_{\epsilon,\lambda}\subset \mathscr{T}\subset \mathscr{T}_s$.
Moreover, for a stable subset $S\subset E$, we have $\text{cl}_{\epsilon,\lambda}(S)=\text{cl}_0(S)=\text{cl}_s(S)$, and therefore we just write $\text{cl}(S)$. 
The stable topology $\mathscr{T}_s$ is uniquely related with a conditional locally convex topology $\mathcal{T}$ on $\mathbf{E}$, and $\text{cl}(S)$ is the carrier of the conditional closure of $\mathbf{S}$. 
Further, recall that if $\mathscr{T}$ is stable collection, that is, if $\mathscr{T}=\mathscr{T}_s$, then it holds the identity $E^\ast_{\epsilon,\lambda}=E^\ast_0=E^\ast_s$ for the topological $L^0$-dual spaces. 
In this case, we just write $E^\ast$.  
If $\mathscr{T}$ is non-stable, then we only have  $E^\ast_0\subset E^\ast_{\epsilon,\lambda}=E^\ast_s$ since the first space may be non-stable. 
We will make frequent use of these results without further comment. 

Throughout, we will suppose that any $L^0$-module considered $E$ has support $1$. 
For $x\in E$, we define $\text{supp}(x):=\wedge\{a\in\mathcal{A}\colon 1_{A^c}x=0\}$.
We start with the hyperplane separation theorem. 
For a discussion of variants of this theorem w.r.t.~the $L^0$- and $(\epsilon,\lambda)$-topology, we refer to the comparison in \cite{guo10}. 
We state the locally $L^0$-convex version obtained in \cite{kupper03}. 
\begin{theorem}
Let $(E,\mathscr{T})$ be a locally $L^0$-convex module and $S_1,S_2\subset E$ non-empty and $L^0$-convex with $S_1$ open. 
If $1_A S_1\cap 1_A S_2=\emptyset$ for all $a\in\mathcal{A}$ with $a>0$, then there exists $f\in E^\ast_0$ such that 
$$f(y)\leq f(z)\quad\text{ for all }y\in S_1, z\in S_2.$$   	
\end{theorem}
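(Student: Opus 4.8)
The plan is to adapt the classical Hahn--Banach separation argument via a Minkowski gauge, using Theorem~\ref{t:extension} as the extension step and reading the local disjointness hypothesis as the module substitute for ``$0$ lies outside the difference set''.

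First I would fix $a_0\in S_1$ and $b_0\in S_2$, set $x_0:=b_0-a_0$, and form
$$C:=S_1-S_2+x_0=\{y-z+x_0\colon y\in S_1,\ z\in S_2\}.$$
Since $S_1$ is open and $S_2$ is nonempty and $L^0$-convex, $C$ is an open, $L^0$-convex neighbourhood of $0$ (note $0=a_0-b_0+x_0\in C$). The key translation of the hypothesis is: for every $a\in\mathcal{A}$ with $a>0$ one has $1_A x_0\notin 1_A C$. Indeed, $1_A x_0=1_A(y-z+x_0)$ would force $1_A y=1_A z$, i.e.\ $1_A S_1\cap 1_A S_2\neq\emptyset$, contradicting the assumption. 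Taking the particular element $0\in C$, this also shows $1_A x_0\neq 0$ for all $a>0$, so $x_0$ has full support. This last point is what makes the rank-one submodule $F:=L^0 x_0$ free, and it is precisely where the \emph{local} (rather than merely global) disjointness is used.

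Next I would introduce the $L^0$-valued Minkowski gauge
$$p(x):=\inf\{\lambda\in L^0_{++}\colon x\in\lambda C\},$$
the infimum being an essential infimum, and record its standard properties in the module setting: $p$ is $L^0$-positively homogeneous and subadditive (hence $L^0$-convex), $C=\{x\in E\colon p(x)<1\}$ because $C$ is open, and $p$ is $\mathscr{T}$-continuous since $C$ is a neighbourhood of $0$. From $1_A x_0\notin 1_A C$ for all $a>0$, together with $\lambda C\subset C$ for $0\leq\lambda\leq 1$ (as $0\in C$ and $C$ is $L^0$-convex), one deduces $p(x_0)\geq 1$ a.e. On $F=L^0 x_0$ define the $L^0$-linear functional $g(\lambda x_0):=\lambda$, which is well defined because $x_0$ has full support; splitting via $\{\lambda\geq 0\}$ and $\{\lambda<0\}$ and concatenating shows $g\leq p$ on $F$, using $p(x_0)\geq 1$ on the first part and $p\geq 0$ on the second.

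Then I would apply the Hahn--Banach extension theorem (Theorem~\ref{t:extension}; recall from the footnote there that the extension does not require stability) to the $L^0$-convex $p$, the submodule $F$, and the dominated functional $g$, obtaining an $L^0$-linear $f\colon E\to L^0$ with $f|_F=g$ and $f\leq p$ on $E$. Continuity $f\in E^\ast_0$ follows from $\pm f\leq\max\{p(\cdot),p(-\cdot)\}$ and the continuity of $p$. Finally, for $y\in S_1$ and $z\in S_2$ the element $y-z+x_0$ lies in $C=\{p<1\}$, so, using $f(x_0)=g(x_0)=1$,
$$f(y)-f(z)+1=f(y-z+x_0)\leq p(y-z+x_0)<1,$$
whence $f(y)<f(z)$, and in particular $f(y)\leq f(z)$, as required. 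I expect the main obstacle to be the careful verification of the gauge's properties over $L^0$ --- in particular the identity $C=\{p<1\}$ and the inequality $p(x_0)\geq 1$ a.e.\ --- since these rest on essential-infimum and pasting arguments rather than the scalar estimates of the classical proof; the identification of the local disjointness hypothesis with ``$x_0$ has full support and $1_A x_0\notin 1_A C$ for all $a>0$'' is the conceptual crux but is short once isolated.
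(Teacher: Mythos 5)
The paper does not actually prove this theorem---it is quoted verbatim from \cite{kupper03}---and your gauge-function argument is correct and is essentially the proof given in that reference: form $C=S_1-S_2+x_0$, take its $L^0$-valued Minkowski functional, dominate the functional $\lambda x_0\mapsto\lambda$ on $L^0x_0$, and extend via Theorem \ref{t:extension} (which, as you note, does not need stability). One small imprecision: the full identity $C=\{x\colon p(x)<1\}$ would require $C$ to be closed under \emph{countable} concatenations (only finite concatenations come for free from $L^0$-convexity), but your argument uses only the inclusion $C\subset\{p<1\}$ and the inequality $p(x_0)\geq 1$, both of which rest on two-fold pastings and an exhaustion over the downward-directed set $\{\lambda\in L^0_{++}\colon x\in\lambda C\}$, so nothing is lost.
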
 
With the notion of stable compactness, we can strengthen the previous statement and obtain the following strong separation result which has not been available before. 
\begin{theorem}
Let $(E,\mathscr{T})$ be a stable locally $L^0$-convex module. 
Let $S_1,S_2\subset E$ be stable $L^0$-convex with $S_1$  stable  compact and $S_2$ closed. 
If  $1_A S_1\cap 1_A S_2=\emptyset$ for all $a\in\mathcal{A}$ with $a>0$, then there exists $f\in E^\ast$ and $r\in L^0_{++}$ such that 
$$f(x)+r< f(y)\quad\text{ for all }x\in S_1,y\in S_2.$$
\end{theorem}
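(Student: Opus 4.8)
The plan is to mirror the classical proof that a compact convex set can be strictly separated from a disjoint closed convex set in a locally convex space, carrying out each step in the stable category and invoking the weak hyperplane separation theorem stated above together with stable compactness of $S_1$. Fix a stable family $\mathscr{P}$ of $L^0$-seminorms inducing $\mathscr{T}_s$, and recall that since $\mathscr{T}$ is stable we have $E^\ast_0=E^\ast$ by Remark \ref{r:comparison}.

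First I would fatten $S_1$ to a stable open $L^0$-convex set that is still conditionally disjoint from $S_2$. Since $S_2$ is closed and, by the hypothesis $1_A S_1\cap 1_A S_2=\emptyset$ for all $a>0$, no point of $S_1$ meets $S_2$ on any positive part, for each $x\in S_1$ there is a stable, symmetric, $L^0$-convex open neighbourhood $W_x$ of $0$ with $1_A((x+W_x)+W_x)\cap 1_A S_2=\emptyset$ for all $a>0$; such $W_x$ exists because $x$ lies conditionally outside the stably closed set $S_2$ and one may halve a basic neighbourhood. The stabilized family $\{x+W_x:x\in S_1\}$ is a stable cover of $S_1$ by stable open sets, so by Proposition \ref{thm: StCompactness}(iii) there are $(a_k)\in p(1)$ and finite subfamilies $\{x_1^k,\dots,x_{n_k}^k\}$ with $S_1=\sum_k 1_{A_k}\bigcup_{i=1}^{n_k}(x_i^k+W_{x_i^k})$. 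Setting $V:=\sum_k 1_{A_k}\bigcap_{i=1}^{n_k}W_{x_i^k}$ produces a stable, symmetric, $L^0$-convex open neighbourhood of $0$, and the addition bookkeeping on each $A_k$ shows $1_A(S_1+V)\cap 1_A S_2=\emptyset$ for all $a>0$. Shrinking $V$ to a basic neighbourhood if necessary (which preserves this disjointness), I may also assume $f$ below is bounded on $V$.

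Next, $S_1+V$ is stable, $L^0$-convex and open, being a union of translates of the open set $V$, so I would apply the weak separation theorem above to the conditionally disjoint pair $S_1+V$ and $S_2$. This yields $f\in E^\ast$ with $f(x)+f(v)=f(x+v)\le f(y)$ for all $x\in S_1$, $v\in V$, $y\in S_2$, hence, with $s:=\sup_{v\in V}f(v)\in L^0_+$ (finite by the previous shrinking), we get $f(x)+s\le f(y)$ for all $x\in S_1$, $y\in S_2$. Because $V$ is symmetric, $s=\sup_{v\in V}|f(v)|\ge 0$, and $s>0$ a.e.\ precisely when $f$ is conditionally non-vanishing, i.e.\ $1_A f\ne 0$ for every $a>0$; this is guaranteed by the exhaustion mechanism built into the $L^0$-module separation theorems (on any part where $f$ vanished one re-applies the weak theorem to the still conditionally disjoint restrictions $1_A S_1$, $1_A S_2$ with $1_A S_1$ stable compact, and concatenates over a maximal partition), cf.\ \cite{kupper03}. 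Thus $s\in L^0_{++}$, and putting $r:=s/2\in L^0_{++}$ gives $f(x)+r\le f(y)-s/2<f(y)$ for all $x\in S_1$, $y\in S_2$, as required.

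The main obstacle is the fattening step: producing a single stable open $L^0$-convex neighbourhood $V$ of $0$ for which $S_1+V$ stays conditionally disjoint from $S_2$ on every positive part, which is exactly where stable compactness is indispensable and where the a.e.-bookkeeping across the partition $(a_k)$ must be handled with care; the conditional non-vanishing of $f$ needed for strictness is the secondary technical point. Alternatively, one could bypass both difficulties by translating the statement through the equivalence of Theorem \ref{t:catequiv3}: the induced conditional set $\mathbf{S_1}$ is conditionally compact, $\mathbf{S_2}$ conditionally closed, and the two conditionally disjoint, so the conditional strong separation theorem furnishes a conditionally nonzero conditional functional $\mathbf{f}$ and a strictly positive conditional real $\mathbf{r}$ with $\mathbf{f}(\mathbf{x})+\mathbf{r}<\mathbf{f}(\mathbf{y})$; reading this on the carrier at $a=1$ delivers $f\in E^\ast$ and $r\in L^0_{++}$ with the stated strict inequality.
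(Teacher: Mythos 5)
Your argument is correct in substance, but it is not the route the paper takes: the paper's entire proof is the one\nobreakdash-line reduction that you sketch only as an afterthought, namely pass to the induced conditional structures via the categorical equivalence and quote the conditional strong separation theorem \cite[Theorem 5.5(ii)]{DJKK13}. Your main argument instead redoes the classical compact-versus-closed separation inside the stable category: fatten $S_1$ by a stable symmetric $L^0$-convex neighbourhood $V$ of $0$ obtained from a stable finite subcover, separate the open $L^0$-convex set $S_1+V$ from $S_2$ with the weak hyperplane theorem, and extract the gap $r$ from $\sup_{v\in V}f(v)$. This buys a self-contained proof that makes visible exactly where stable compactness enters (the subcover) and where the partition bookkeeping lives, at the cost of two delicate points. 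First, the weak separation theorem as quoted in the paper does not exclude $f=0$, so the strictness hinges on the stronger nonvanishing form of the separation theorem in \cite{kupper03} together with the exhaustion over a maximal partition that you describe; without that supplement the last step would collapse. Second, already at the first step you need the \emph{conditional} openness of the complement of $S_2$, i.e.\ a basic neighbourhood $U$ of $x$ with $1_AU\cap 1_AS_2=\emptyset$ for \emph{all} $a>0$; plain closedness only excludes the case $a=1$, and upgrading it to all $a>0$ is again an exhaustion argument. Both supplements are standard in this setting and you flag them, so the proof is acceptable; the paper simply avoids all of this by outsourcing the work to the conditional theorem, which is exactly your closing alternative.
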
 
\begin{proof}
The claim follows from \cite[Theorem 5.5(ii)]{DJKK13}. 
\end{proof}
A Fenchel-Moreau theorem for stable locally $L^0$-convex modules is proved in \cite[Theorem 3.8]{kupper03}. 
It was shown in \cite{guo2015random1} that this result is valid for the $L^0$-topology (see \cite[Theorem 5.5]{guo2015random1}) and the $(\epsilon,\lambda)$-topology (see \cite[Theorem 5.3]{guo2015random1}). 
Recall that a function $f:E\rightarrow \bar{L}^0$ is said to be \emph{proper} if $f(x)>-\infty$ for all $x\in E$ and $f(x)\in L^0$ for some $x\in E$. 
Let $(E,\mathscr{T}_0)$ be a  locally $L^0$-convex module and $f:E\rightarrow\bar{L}^0$ a proper function. 
We can consider the following three module conjugates of $f$: 
 \begin{align*}
f^\ast_{\epsilon,\lambda}&:E^\ast_{\epsilon,\lambda}\rightarrow\bar{L}^0, \;f^\ast_{\epsilon,\lambda}(g):=\sup_{x\in E} (g(x)-f(x)), \\
f^\ast_0&:E^\ast_{0}\rightarrow\bar{L}^0,  \;f^\ast_0(g):=\sup_{x\in E} (g(x)-f(x)), \\
f^\ast_s&:E^\ast_s\rightarrow\bar{L}^0,  \;f^\ast_s(g):=\sup_{x\in E} (g(x)-f(x)). 
\end{align*}
Since $E^\ast_{\epsilon,\lambda}=E^\ast_s$, we have $f^\ast_{\epsilon,\lambda}=f^\ast_s$. 
Similarly, we can define three module bi-conjugates of $f$. 
\begin{theorem}
Let $(E,\mathscr{T}_0)$ be a locally $L^0$-convex module. 
Let $f:E\rightarrow \bar{L}^0$ be proper, $L^0$-convex and lower semi-continuous. 
Then 
$$f(x)=f_0^{\ast\ast}(x)=f_s^{\ast\ast}(x)=f_{\epsilon,\lambda}^{\ast\ast}(x),\quad x\in E.$$
\end{theorem}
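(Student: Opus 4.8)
The plan is to reduce all four identities to a single Fenchel--Moreau inequality by exploiting the inclusion of dual modules $E^\ast_0\subseteq E^\ast_{\epsilon,\lambda}=E^\ast_s$ recalled in Remark~\ref{r:comparison}. First I would record the two elementary inequalities that need no closedness hypothesis. For every $g\in E^\ast_s$, the weak duality estimate $g(x)-f^\ast_s(g)\leq f(x)$ (take $y=x$ in the supremum defining $f^\ast_s$) yields $f^{\ast\ast}_s(x)\leq f(x)$ for all $x\in E$; this uses only that $\bar{L}^0$ is Dedekind complete so that the relevant essential suprema exist. Next, since $E^\ast_0\subseteq E^\ast_s$ and the conjugate formula is the restriction $f^\ast_s|_{E^\ast_0}=f^\ast_0$, passing to the supremum over the smaller index set gives $f^{\ast\ast}_0(x)\leq f^{\ast\ast}_s(x)$. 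Combining the two, one obtains the chain
\[
f^{\ast\ast}_0(x)\leq f^{\ast\ast}_s(x)\leq f(x),\qquad x\in E.
\]

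The only substantive input is then the reverse inequality $f(x)\leq f^{\ast\ast}_0(x)$, which is precisely the locally $L^0$-convex Fenchel--Moreau theorem for the pairing $\langle E,E^\ast_0\rangle$. Since $f$ is proper, $L^0$-convex and $\mathscr{T}_0$-lower semi-continuous, this is exactly \cite[Theorem~5.5]{guo2015random1}, which I would invoke directly. Feeding $f=f^{\ast\ast}_0$ back into the chain forces $f^{\ast\ast}_0=f^{\ast\ast}_s=f$ pointwise. Finally, the remaining identity $f^{\ast\ast}_{\epsilon,\lambda}=f^{\ast\ast}_s$ is immediate from $E^\ast_{\epsilon,\lambda}=E^\ast_s$ together with the already noted equality $f^\ast_{\epsilon,\lambda}=f^\ast_s$: the two biconjugates are suprema of the identical expression $g(x)-f^\ast(g)$ over the identical index set. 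This closes all four equalities.

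I expect the genuine difficulty to be concentrated entirely in the inequality $f\leq f^{\ast\ast}_0$, i.e.\ in the fact that the possibly non-stable dual $E^\ast_0$ already separates enough to recover $f$. If one preferred a self-contained argument in place of citing \cite{guo2015random1}, the route would be the classical epigraph separation: the epigraph $\{(x,t)\in E\times L^0\colon f(x)\leq t\}$ is $L^0$-convex and $\mathscr{T}_0$-closed by lower semi-continuity, and for a point $(x_0,s)$ lying below it one separates using the hyperplane separation theorem recalled above, whose conditional non-intersection hypothesis $1_AS_1\cap 1_AS_2=\emptyset$ for all $a>0$ is tailored to the $L^0$-valued setting. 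Managing the $\bar{L}^0$-valued strict inequalities and the exhaustion over $\mathcal{A}$ needed to turn local separation into a global affine minorant in $E^\ast_0$ is the only delicate point; taking the essential supremum over such minorants then reconstructs $f$. As a cross-check, one may instead obtain $f=f^{\ast\ast}_s$ directly from the stable Fenchel--Moreau theorem \cite[Theorem~3.8]{kupper03}, which applies because $\mathscr{T}_0\subseteq\mathscr{T}_s$ makes $f$ automatically $\mathscr{T}_s$-lower semi-continuous, giving the same conclusion along a second independent path.
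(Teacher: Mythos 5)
Your argument is correct, but it is anchored differently from the paper's. You take the substantive input to be the Fenchel--Moreau theorem for the $\mathscr{T}_0$-topology, \cite[Theorem 5.5]{guo2015random1}, which gives $f\leq f_0^{\ast\ast}$ directly, and then squeeze via the elementary chain $f_0^{\ast\ast}\leq f_s^{\ast\ast}\leq f$ coming from weak duality and $E^\ast_0\subset E^\ast_s$. The paper instead anchors at the \emph{stable} Fenchel--Moreau theorem \cite[Theorem 3.8]{kupper03} to get $f=f_s^{\ast\ast}$, and then recovers $f_0^{\ast\ast}=f_s^{\ast\ast}$ by a stabilization argument: since $(E^\ast_0)^s=E^\ast_s$, every $g\in E^\ast_s$ is a countable concatenation $\sum 1_{A_k}g_k$ of elements $g_k\in E^\ast_0$, and the map $g\mapsto g(x)-f^\ast_s(g)$ is stable, so the essential supremum over $E^\ast_s$ coincides with that over $E^\ast_0$. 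Your route is shorter but outsources the hardest inequality to \cite{guo2015random1}; the paper's route is more in the spirit of the article, since it exhibits the $\mathscr{T}_0$- and $(\epsilon,\lambda)$-versions as consequences of the stable version plus the dual comparison facts of Remark \ref{r:comparison}, rather than as independent imports. Both treatments of $f_{\epsilon,\lambda}^{\ast\ast}=f_s^{\ast\ast}$ are identical. One small caution about your closing ``cross-check'': obtaining $f=f_s^{\ast\ast}$ from \cite[Theorem 3.8]{kupper03} does \emph{not} by itself give ``the same conclusion,'' because it leaves open whether the possibly non-stable dual $E^\ast_0$ already attains $f$ in the biconjugate; to close that gap along this second path you would need precisely the stabilization step $(E^\ast_0)^s=E^\ast_s$ that the paper uses. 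As written, your main path does not suffer from this, since the cited $\mathscr{T}_0$-theorem supplies $f\leq f_0^{\ast\ast}$ outright.
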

\begin{proof}
Since $f$ is stable, if it is lower semi-continuous w.r.t.~$\mathscr{T}_0$, then it is also lower semi-continuous w.r.t.~$\mathscr{T}_s$ and $\mathscr{T}_{\epsilon,\lambda}$. 
By \cite[Theorem 3.8]{kupper03}, we have $f^{\ast\ast}_s=f$. 
Since $E_s^\ast=E_{\epsilon,\lambda}^\ast$, one has $f^{\ast\ast}_{\epsilon,\lambda}=f^{\ast\ast}_s=f$. 
Further it holds $(E_0^\ast)^s=E_s^\ast$, and thus, for fixed $x\in E$, one has 
\begin{align*}
f^{\ast\ast}_s(x)&=\sup_{g\in E^\ast_s} (g(x)-f^\ast_s(g)) =\sup_{g\in (E^\ast_0)^s} (g(x)-f^\ast_s(g))\\
&=\sup_{g\in E^\ast_0} (g(x)-f^\ast_s(x))=\sup_{g\in E^\ast_0} (g(x)-f^\ast_0(x))=f^{\ast\ast}_0(g).
\end{align*}
\end{proof}
In \cite[Corollary 3.4]{guo2009separation}, a module variant of Mazur's lemma for the $(\epsilon,\lambda)$-topology is proved. 
The conditional version of this result is obtained in \cite[Proposition 3.3]{zapata2016eberlein}. 
We have the following statement. 
\begin{proposition}
Let $(E,\mathscr{T})$ be a locally $L^0$-convex module and $S\subset E$ stable $L^0$-convex.
Then it holds 
\begin{align*}
\text{cl}(S)=\text{cl}_{\sigma(E,E^\ast_s)}(S)=\text{cl}_{\sigma(E,E^\ast_{\epsilon,\lambda})}(S)=\text{cl}_{\sigma(E,E^\ast_0)}(S). 
\end{align*}
\end{proposition}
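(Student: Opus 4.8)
The plan is to deduce all four identities from a single Mazur-type inclusion obtained by hyperplane separation, after first disposing of the ``soft'' inclusions by merely comparing topologies. First I would record the easy half. Since $E^\ast_0\subset E^\ast_{\epsilon,\lambda}=E^\ast_s$ by Remark \ref{r:comparison}, the induced weak topologies satisfy $\sigma(E,E^\ast_0)\subset\sigma(E,E^\ast_s)=\sigma(E,E^\ast_{\epsilon,\lambda})$, and every functional in $E^\ast_s$ is $\mathscr{T}_s$-continuous, so $\sigma(E,E^\ast_s)\subset\mathscr{T}_s$. A coarser topology has larger closures; combined with $\text{cl}(S)=\text{cl}_s(S)$ for the stable set $S$ (Remark \ref{r:comparison}), this yields
\[
\text{cl}(S)=\text{cl}_s(S)\subset\text{cl}_{\sigma(E,E^\ast_s)}(S)=\text{cl}_{\sigma(E,E^\ast_{\epsilon,\lambda})}(S)\subset\text{cl}_{\sigma(E,E^\ast_0)}(S).
\]
It then remains to prove the reverse inclusion $\text{cl}_{\sigma(E,E^\ast_0)}(S)\subset\text{cl}(S)$, which is the genuine content and closes the loop.

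For this I would argue by contraposition: fix $x_0\notin C:=\text{cl}(S)$ and exhibit a $\sigma(E,E^\ast_0)$-neighbourhood of $x_0$ disjoint from $S$, noting that $C$ is stable, $L^0$-convex and closed. The obstruction to invoking the hyperplane separation theorem of \cite{kupper03} directly is that $x_0\notin C$ does not give the required local disjointness $1_A\{x_0\}\cap 1_A C=\emptyset$ for all $a>0$: one may well have $1_Bx_0\in 1_BC$ on some $b>0$. To isolate the part where separation is possible, set $b:=\vee\{a\in\mathcal{A}\colon 1_Ax_0\in 1_AC\}$; by stability of $C$ this supremum is attained through an exhaustion argument as in the proof of Theorem \ref{t:basis}, and since $x_0\notin C$ we must have $b\neq 1$, i.e.\ $b^c>0$, with $1_Ax_0\notin 1_AC$ for every $0<a\leq b^c$. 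Working on the stable module $1_{B^c}E$, where $1_{B^c}x_0$ and $1_{B^c}C$ are now locally disjoint on all positive parts, I would pick an open $L^0$-convex neighbourhood of $1_{B^c}x_0$ missing $1_{B^c}C$ and separate it from $1_{B^c}C$ by the theorem of \cite{kupper03}, which produces $f\in E^\ast_0$; openness of the neighbourhood upgrades this to a strict separation, giving $r\in L^0_{++}$ with $f(z)-f(x_0)\geq 1_{B^c}r$ on $b^c$ for all $z\in C$. Extending $f$ by $f(1_{B^c}\cdot)$ and setting $U:=\{y\in E\colon |f(y-x_0)|<1_{B^c}r+1_B\}$ gives a $\sigma(E,E^\ast_0)$-neighbourhood of $x_0$ with $U\cap S=\emptyset$, because every $z\in S\subset C$ violates the defining inequality on the positive set $b^c$. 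Hence $x_0\notin\text{cl}_{\sigma(E,E^\ast_0)}(S)$, which is the desired inclusion.

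The main obstacle is precisely this local handling. In contrast to the classical Mazur lemma, a global failure $x_0\notin C$ is compatible with $x_0$ lying in $C$ over a part of $\Omega$, so one cannot separate globally and must first carve out the positive part $b^c$ on which genuine disjointness holds; conversely one must check that a single separating functional already destroys weak-closure membership, which works because membership in a stable closed set is detected locally (if $1_{B^c}x_0\notin 1_{B^c}D$ for a stable closed $D$, then $x_0\notin D$). A cleaner but less self-contained alternative would be to transfer the conditional Mazur lemma \cite[Proposition 3.3]{zapata2016eberlein}: by Remark \ref{r:comparison} the set $\text{cl}(S)$ is the carrier of the conditional closure of the conditionally convex set $\mathbf{S}$, while $\text{cl}_{\sigma(E,E^\ast_s)}(S)$ is the carrier of its conditional weak closure, so the conditional identity descends to carriers through the equivalence of Theorem \ref{t:catequiv3}, after which the remaining two topologies are matched exactly as above using $E^\ast_0\subset E^\ast_s=(E^\ast_0)^s$.
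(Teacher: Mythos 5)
Your soft inclusions and your closing ``transfer'' alternative are fine --- the paper in fact offers no written proof and intends exactly that derivation: the identity $\text{cl}(S)=\text{cl}_{\sigma(E,E^\ast_s)}(S)$ is the cited Mazur lemma (\cite[Corollary 3.4]{guo2009separation}, resp.\ \cite[Proposition 3.3]{zapata2016eberlein} via Theorem \ref{t:catequiv3}), and the $E^\ast_0$-closure is matched by the comparison machinery alone: for stable $S$ one has $\text{cl}_{\sigma_0(E,E^\ast_0)}(S)=\text{cl}_{\sigma_{\epsilon,\lambda}(E,E^\ast_0)}(S)=\text{cl}_{\sigma_{\epsilon,\lambda}(E,\text{st}(E^\ast_0))}(S)=\text{cl}_{\sigma(E,E^\ast_s)}(S)$ by Remark \ref{r:comparison} and Proposition \ref{p:equivtop}, since $\text{st}(E^\ast_0)=E^\ast_s$. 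No separation argument is needed for that leg.

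Your primary, self-contained argument has a genuine gap at the point where you invoke the separation theorem. You correctly diagnose that $x_0\notin C$ does not give $1_A\{x_0\}\cap 1_AC=\emptyset$ for all $a>0$ and you carve out $b^c$ accordingly, but the same pathology then recurs one level up: the theorem of \cite{kupper03} requires $1_AV\cap 1_AC=\emptyset$ for \emph{all} $a>0$, and an open $L^0$-convex neighbourhood $V$ of $1_{B^c}x_0$ that merely \emph{misses} $1_{B^c}C$ need not satisfy this. (Already in $E=L^0$, $C=\{0\}$, $x_0=1_A$ with $0<\mathbb{P}(A)<1$ and $V=\{|y-x_0|<1/2\}$ one has $V\cap C=\emptyset$ but $1_{A^c}V\cap 1_{A^c}C\neq\emptyset$; localizing to where $1_Ax_0\notin 1_AC$ removes this for the point but not automatically for a whole neighbourhood.) One can show, using stability of $C$ and the basic neighbourhoods, that $\wedge_U\,a_U=0$ where $a_U:=\vee\{a\colon 1_AU\cap 1_AC\neq\emptyset\}$ runs over basic neighbourhoods $U$ of $x_0$, but turning this into a \emph{single} $\mathscr{T}_0$-open convex $V$ with $a_V=0$ requires pasting countably many neighbourhoods, which in general only produces a $\mathscr{T}_s$-open set; the separation theorem then yields $f\in E^\ast_s=(E^\ast_0)^s$ rather than $f\in E^\ast_0$, and you must additionally decompose $f=\sum 1_{A_k}f_k$ with $f_k\in E^\ast_0$ and work with one $f_k$ on a set $A_k\cap B^c$ of positive measure to build the $\sigma(E,E^\ast_0)$-neighbourhood. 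As written, the step ``which produces $f\in E^\ast_0$'' is not justified; either supply the $a_V=0$ argument plus the decomposition, or drop the direct route in favour of your second one.
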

\subsection{Stable dual pairs}
Given a stable dual pair $(E,F,\langle \cdot,\cdot\rangle)$, one has $(E,\sigma_0(E,F))^\ast=F$ by \cite[Theorem 3.4]{guo2015random2}. 
For a conditional dual pair $(\mathbf{E},\mathbf{F},\langle \cdot,\cdot\rangle)$, it holds $\mathbf{(E,\sigma(E,F))^\ast}=\mathbf{F}$ by \cite[Corollary 4.48]{martindiss}. 
We can derive from the latter the following extension of \cite[Theorem 3.4]{guo2015random2}. 
\begin{proposition}
Let  $(E,F,\langle \cdot,\cdot\rangle)$ be a stable dual pair. Then 
$$(E,\sigma_0(E,F))^\ast=(E,\sigma_s(E,F))^\ast=(E,\sigma_{\epsilon,\lambda}(E,F))^\ast=F.$$
\end{proposition}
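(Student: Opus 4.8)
The plan is to reduce the three asserted equalities to two facts already recorded in Section \ref{s:sec2} --- the inclusion chain among the weak topologies and the coincidence $E^\ast_{\epsilon,\lambda}=E^\ast_s$ --- and then to pin down the common value as $F$ by transporting the conditional statement \cite[Corollary 4.48]{martindiss} through the dictionary of that section. The starting observation is that all three topologies $\sigma_{\epsilon,\lambda}(E,F)$, $\sigma_0(E,F)$ and $\sigma_s(E,F)$ are induced by one and the same stable collection of $L^0$-seminorms $\mathscr{P}=\{|\langle\cdot,y\rangle|\colon y\in F\}$, which is indeed stable as observed in the examples of Section \ref{s:sec2}.

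First I would apply Remark \ref{r:comparison} with $\mathscr{T}=\sigma_0(E,F)=\mathscr{T}_0(\mathscr{P})$; its associated topologies are exactly $\mathscr{T}_{\epsilon,\lambda}=\sigma_{\epsilon,\lambda}(E,F)$ and $\mathscr{T}_s=\sigma_s(E,F)$, so we obtain the inclusions $\sigma_{\epsilon,\lambda}(E,F)\subset\sigma_0(E,F)\subset\sigma_s(E,F)$. Since a finer topology admits more continuous functionals, passing to $L^0$-linear continuous maps gives $E^\ast_{\epsilon,\lambda}\subset E^\ast_0\subset E^\ast_s$. Combining this with the identity $E^\ast_{\epsilon,\lambda}=E^\ast_s$ from \cite[Proposition 2.14]{guo10} (also recorded in Remark \ref{r:comparison}) forces the chain to collapse, so $E^\ast_{\epsilon,\lambda}=E^\ast_0=E^\ast_s$. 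This already settles all equalities \emph{among} the three duals, and it remains only to identify their common value with $F$. For that I would work on the conditional side: as the stable dual pair $(E,F,\langle\cdot,\cdot\rangle)$ induces a conditional dual pair $(\mathbf{E},\mathbf{F},\langle\cdot,\cdot\rangle)$, the result \cite[Corollary 4.48]{martindiss} yields $\mathbf{(E,\sigma(E,F))^\ast}=\mathbf{F}$ for the conditional weak topology. Under the equivalence of Theorem \ref{t:catequiv3} the classical stable topology underlying $\sigma(E,F)$ is $\sigma_s(E,F)=\mathscr{T}_s(\mathscr{P})$, and by \cite[Proposition 3.11]{DJKK13} the carrier of the conditional dual is the stable dual $E^\ast_s$; taking carriers in the conditional identity then gives $E^\ast_s=F$, whence $E^\ast_{\epsilon,\lambda}=E^\ast_0=E^\ast_s=F$. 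Alternatively one may identify the common value directly through $E^\ast_0=F$ by \cite[Theorem 3.4]{guo2015random2}.

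I expect the only delicate step to be the transport in the last part: one must verify that the classical topology underlying the conditional weak topology $\sigma(E,F)$ is genuinely the stable topology $\sigma_s(E,F)$ and not merely $\sigma_0(E,F)$, and that the carrier of $\mathbf{(E,\sigma(E,F))^\ast}$ is $E^\ast_s$ rather than the possibly non-stable $E^\ast_0$. Both points rest on the correspondence between conditional locally convex topologies and stable locally $L^0$-convex topologies in Theorem \ref{t:catequiv3} together with \cite[Proposition 3.11]{DJKK13}; once these identifications are secured, the remaining work is the purely formal sandwich argument of the preceding paragraph.
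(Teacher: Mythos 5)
Your overall strategy (reduce everything to the conditional result of \cite[Corollary 4.48]{martindiss} for $\sigma_s$, the identity $E^\ast_{\epsilon,\lambda}=E^\ast_s$, and Guo's theorem for $\sigma_0$) is the paper's strategy, but your primary route contains one genuine gap: the inclusion $E^\ast_{\epsilon,\lambda}\subset E^\ast_0$ does not follow from $\sigma_{\epsilon,\lambda}(E,F)\subset\sigma_0(E,F)$. The slogan ``a finer topology admits more continuous functionals'' only applies when the codomain topology is held fixed, whereas here the codomain topology on $L^0$ changes along with the domain: $E^\ast_{\epsilon,\lambda}$ asks for continuity into $L^0$ with convergence in probability, while $E^\ast_0$ asks for continuity into $L^0$ with the (strictly finer, when $\mathcal{A}$ is atomless) interval topology. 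In fact Remark \ref{r:comparison} records the \emph{reverse} inclusion $E^\ast_0\subset E^\ast_{\epsilon,\lambda}=E^\ast_s$ in general, and the example of $(L^0)^\N$ following that remark shows the inclusion can be strict. Note also that $\sigma_0(E,F)$ need not be a stable topology here: the collection $\{|\langle\cdot,y\rangle|\colon y\in F\}$ is stable but not closed under finite suprema, so the criterion $\mathscr{P}=\text{st-sup}(\mathscr{P})$ does not apply and you cannot invoke \cite[Corollary 3.6]{guo2015random1} to get $E^\ast_{\epsilon,\lambda}=E^\ast_0$ for free. Consequently your sandwich does not collapse onto $E^\ast_0$, and that dual is left unidentified by the main argument.

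The repair is exactly what you relegate to an ``alternative'': the equality $(E,\sigma_0(E,F))^\ast=F$ must be imported from \cite[Theorem 3.4]{guo2015random2} as an independent input --- it is not recoverable from the other two duals. With that, the proof is the paper's: $(E,\sigma_s(E,F))^\ast=F$ by transporting \cite[Corollary 4.48]{martindiss} through the correspondence of Section \ref{s:sec2} (your ``delicate step'' there is handled correctly), $(E,\sigma_0(E,F))^\ast=F$ by Guo's theorem, and $(E,\sigma_{\epsilon,\lambda}(E,F))^\ast=(E,\sigma_s(E,F))^\ast$ by \cite[Proposition 2.14]{guo10}.
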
  
\begin{proof}
Since the induced conditional structure $(\mathbf{E},\mathbf{F},\langle \cdot,\cdot\rangle)$ is a conditional dual pair, by \cite[Corollary 4.48]{martindiss}, one has $(E,\sigma_s(E,F))^\ast=F$. 
The claim thus follows from  \cite[Theorem 3.4]{guo2015random2}.   
\end{proof}
Next, by an adaptation of the proof of the conditional Banach-Alaoglu theorem (see \cite[Theorem 5.10]{DJKK13}), we can derive the following module variant of it. 
\begin{theorem}\label{t:alaoglu}
Let $(E,\mathscr{T})$ be a stable locally $L^0$-convex module. 
Then 
\[
U^\circ:=\left\{ f\in E^\ast  \colon  f(x)\leq 1\text{ for all }x\in U\right\} 
\]
 is  stable  $\sigma_s(E^\ast,E)$-compact.
\end{theorem}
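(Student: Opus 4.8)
The plan is to transport the statement into conditional set theory, apply the conditional Banach--Alaoglu theorem \cite[Theorem 5.10]{DJKK13}, and translate the conclusion back by means of the dictionary built up in Sections~\ref{s:sec2} and \ref{s:compact}. Throughout I read $U$ as a stable $L^0$-convex neighbourhood of $0\in E$, so that $U^\circ$ is its polar in $E^\ast=E^\ast_s$; this is legitimate precisely because $\mathscr{T}$ is assumed stable, whence the three dual modules coincide and $E^\ast$ is the carrier of the conditional dual $\mathbf{E}^\ast$ (Remark \ref{r:comparison}).

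First I would check that $U^\circ$ is a stable subset of $E^\ast$, so that it induces a conditional subset. Since $E^\ast$ is stable and the evaluation pairing $(f,x)\mapsto f(x)$ is $L^0$-bilinear, hence stable in its first argument, for any $(f_k)\subset U^\circ$ and $(a_k)\in p(1)$ the concatenation $f=\sum_k 1_{A_k}f_k$ satisfies $f(x)=\sum_k 1_{A_k}f_k(x)\leq\sum_k 1_{A_k}=1$ for every $x\in U$, so that $f\in U^\circ$. Thus $U^\circ$ is the carrier of a conditional subset $\mathbf{U}^\circ$ of $\mathbf{E}^\ast$, and by Remark \ref{r:comparison} the stable topology $\sigma_s(E^\ast,E)$ is exactly the one associated with the conditional weak$^\ast$ topology $\sigma(\mathbf{E}^\ast,\mathbf{E})$.

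Next I would identify $\mathbf{U}^\circ$ with the conditional polar of the conditional neighbourhood $\mathbf{U}$ of $\mathbf{0}$, i.e.\ verify that the carrier of the conditional polar is precisely $U^\circ$. This is where the conditional universal quantifier ``for all $\mathbf{x}\in\mathbf{U}$'' must be matched with the classical ``for all $x\in U$'' together with the $L^0$-valued order $\leq$ on $\mathbf{R}$; it follows from the stability of $U$ and the stability of the pairing, but it is the step demanding the most care. Granting this identification, the conditional Banach--Alaoglu theorem \cite[Theorem 5.10]{DJKK13} yields that $\mathbf{U}^\circ$ is conditionally compact for $\sigma(\mathbf{E}^\ast,\mathbf{E})$.

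Finally, since stable compactness is the classical interpretation of conditional compactness --- concretely, a stable subset is stable compact if and only if the conditional subset it induces is conditionally compact, by the correspondence underlying Proposition \ref{thm: StCompactness} and \cite[Proposition 3.25]{DJKK13} --- the conditional compactness of $\mathbf{U}^\circ$ passes to stable $\sigma_s(E^\ast,E)$-compactness of its carrier $U^\circ$, which is the assertion. The main obstacle is bookkeeping in the translation: one must ensure that the carrier of each conditional object (dual space, weak$^\ast$ topology, polar) is exactly the stable object in the statement and that no non-stable neighbourhood or non-stable functional slips in; all of this is guaranteed by the stability of $\mathscr{T}$ together with the comparison results collected in Remark \ref{r:comparison}. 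A self-contained alternative, should one wish to avoid the conditional machinery, would be to mimic the classical Tychonoff-based argument, embedding $U^\circ$ into a stable product of $L^0$-intervals and invoking the stable Tychonoff theorem and the Heine--Borel characterization of Theorem \ref{thm: GenHeineB}.
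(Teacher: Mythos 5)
Your proposal is correct and follows essentially the same route as the paper, which likewise derives the result from the conditional Banach--Alaoglu theorem \cite[Theorem 5.10]{DJKK13} via the classical--conditional correspondence of Sections \ref{s:sec2} and \ref{s:compact}; your reading of $U$ as a stable $L^0$-convex neighbourhood of $0$ (left implicit in the statement) is the intended one. The stability check for $U^\circ$ and the translation of conditional compactness into stable compactness are exactly the bookkeeping the paper's one-line proof presupposes.
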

\begin{corollary}
Let $(E,\Vert\cdot\Vert)$ be an $L^0$-normed module. 
Then $\{ f\in E^\ast \colon \Vert f\Vert\leq 1 \}$ is  stable  $\sigma_s(E^\ast,E)$-compact. 
\end{corollary}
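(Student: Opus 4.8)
\emph{Plan.} The plan is to recognise the dual unit ball $\{f\in E^\ast\colon\Vert f\Vert\leq 1\}$ as the polar $U^\circ$ of the unit ball $U=\{x\in E\colon\Vert x\Vert\leq 1\}$ of $E$, and then to quote the module Banach--Alaoglu theorem, Theorem~\ref{t:alaoglu}. First I would note that the topology of an $L^0$-normed module is the stable locally $L^0$-convex topology induced by the single stable $L^0$-seminorm $\Vert\cdot\Vert$: indeed $\{\Vert\cdot\Vert\}$ is already a stable collection with $\mathscr{T}_0(\{\Vert\cdot\Vert\})=\mathscr{T}_s(\{\Vert\cdot\Vert\})$, so $(E,\mathscr{T})$ is a stable locally $L^0$-convex module and $E^\ast=E^\ast_s$. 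Hence Theorem~\ref{t:alaoglu} is available once a suitable stable neighbourhood $U$ of $0$ is fixed.

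Next I would verify that $U=\{x\in E\colon\Vert x\Vert\leq 1\}$ is a stable $\mathscr{T}$-neighbourhood of $0$. Stability uses stability of the $L^0$-norm: for $(x_k)\subset U$ and $(a_k)\in p(1)$ one has $\Vert\sum_k 1_{A_k}x_k\Vert=\sum_k 1_{A_k}\Vert x_k\Vert\leq 1$, so the concatenation lies in $U$; and $U$ contains the basic open ball $\{x\colon\Vert x\Vert<1\}$, so it is a neighbourhood of $0$. The key step is then the identification $\{f\in E^\ast\colon\Vert f\Vert\leq 1\}=U^\circ$. From $|f(x)|\leq\Vert f\Vert\,\Vert x\Vert$ it follows that $\Vert f\Vert\leq 1$ forces $f(x)\leq 1$ on $U$, i.e.~$f\in U^\circ$; conversely, since $U=-U$, membership in $U^\circ$ gives both $f(x)\leq 1$ and $-f(x)=f(-x)\leq 1$ for $x\in U$, hence $|f(x)|\leq 1$ and therefore $\Vert f\Vert=\sup_{x\in U}|f(x)|\leq 1$. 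With this identification, Theorem~\ref{t:alaoglu} applied to $U$ yields that $\{f\in E^\ast\colon\Vert f\Vert\leq 1\}=U^\circ$ is stable $\sigma_s(E^\ast,E)$-compact.

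Since the result is a specialisation of Theorem~\ref{t:alaoglu}, there is no deep obstacle; the only point requiring care is reconciling the one-sided polar condition $f(x)\leq 1$ of Theorem~\ref{t:alaoglu} with the two-sided bound $|f(x)|\leq 1$ built into the dual norm. This is settled entirely by the symmetry $U=-U$ of the unit ball together with the $L^0$-linearity $f(-x)=-f(x)$, as used above.
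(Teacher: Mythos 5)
Your proof is correct and is exactly the argument the paper intends: the corollary is stated as an immediate consequence of Theorem \ref{t:alaoglu}, obtained by taking $U$ to be the unit ball of $E$ and identifying $\{f\in E^\ast\colon \Vert f\Vert\leq 1\}$ with $U^\circ$ via the symmetry $U=-U$ and $L^0$-linearity. The verification that $U$ is a stable neighbourhood of $0$ and the reconciliation of the one-sided polar condition with the two-sided norm bound are precisely the (routine) points that need checking, and you handle both correctly.
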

For a stable set $S$, its \emph{stable $L^0$-convex hull} $\text{co}_s(S)$ is defined as the collection of all  stable  finite sums $\sum_{\1\leq\n\leq\m} r_\n x_\n$, where $(x_\n)_{\1\leq\n\leq\m}\subset S$ is  stable  finite and $(r_\n)_{\1\leq\n\leq\m}\subset L^0$ is  stable  finite with  $r_\n\geq 0$ and $ \sum_{\1\leq\n\leq \m} r_\n=1$. 
A bipolar theorem for stable dual pairs w.r.t.~the $L^0$-topology is proved in \cite[Theorem 3.4]{guo2015random2}. 
Its conditional version is provided in \cite[Theorem 5.9]{DJKK13} from which we obtain: 
\begin{proposition}  
Let $(E,F,\langle \cdot,\cdot\rangle)$ be a stable dual pair and $S\subset E$ stable. 
Then 
$$S^{\circ\circ}=\text{cl}(\text{co}_s(S\cup\{0\})).$$
\end{proposition}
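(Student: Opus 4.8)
The plan is to deduce the statement from the conditional bipolar theorem \cite[Theorem 5.9]{DJKK13} by transporting each object through the correspondence between stable $L^0$-module structures and conditional structures established in Section~\ref{s:sec2}. By the second item in the list of examples following Theorem~\ref{t:catequiv3}, the stable dual pair $(E,F,\langle\cdot,\cdot\rangle)$ induces a conditional dual pair $(\mathbf{E},\mathbf{F},\langle\cdot,\cdot\rangle)$ in the sense of \cite[Definition 5.6]{DJKK13}, so that \cite[Theorem 5.9]{DJKK13} is applicable. The strategy is therefore threefold: (i) show that the polar and bipolar of a stable set are again stable and are the carriers of the corresponding conditional polar and bipolar; (ii) identify $\text{co}_s(S\cup\{0\})$ as the carrier of the conditional convex hull and its classical closure as the carrier of the conditional closure; and (iii) read off the desired equality from the conditional one by passing to carriers, using that two conditional sets coincide if and only if their carriers coincide.

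First I would verify that the polar $S^\circ=\{y\in F\colon \langle x,y\rangle\leq 1\text{ for all }x\in S\}$ is a stable subset of $F$. Given $(y_k)\subset S^\circ$ and $(a_k)\in p(1)$, the $L^0$-bilinearity of the pairing yields $\langle x,\sum_k 1_{A_k}y_k\rangle=\sum_k 1_{A_k}\langle x,y_k\rangle\leq \sum_k 1_{A_k}=1$ for every $x\in S$, whence $\sum_k 1_{A_k}y_k\in S^\circ$. Using the isometric isomorphism between conditional reals and $L^0$ \cite[Theorem 4.4]{DJKK13}, the condition defining $S^\circ$ is precisely the carrier-level transcription of the condition defining the conditional polar $\mathbf{S}^\circ$, so $S^\circ$ is the carrier of $\mathbf{S}^\circ$. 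Iterating the same argument, $S^{\circ\circ}$ is stable and is the carrier of the conditional bipolar $\mathbf{S}^{\circ\circ}$.

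Next I would match the right-hand side. By its very definition as the collection of stable finite convex combinations, $\text{co}_s(S\cup\{0\})$ is the carrier-level interpretation of the conditional convex hull $\mathbf{co}(\mathbf{S}\cup\{\mathbf{0}\})$; in particular it is stable and $L^0$-convex. Applying the closure, Remark~\ref{r:comparison} (together with \cite[Proposition 3.5]{DJKK13} and \cite[Proposition 3.7]{DJKK13}) guarantees that for a stable $L^0$-convex set the classical closure $\text{cl}$ is unambiguous and is the carrier of the conditional closure, so $\text{cl}(\text{co}_s(S\cup\{0\}))$ is the carrier of the conditional closure of $\mathbf{co}(\mathbf{S}\cup\{\mathbf{0}\})$. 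Since \cite[Theorem 5.9]{DJKK13} asserts that $\mathbf{S}^{\circ\circ}$ equals that conditional closure, passing to carriers gives $S^{\circ\circ}=\text{cl}(\text{co}_s(S\cup\{0\}))$.

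The main obstacle I anticipate is the bookkeeping around which topology the closure is taken in: the polar is phrased through the pairing, so the ambient topology is a weak topology $\sigma(E,F)$, and one must ensure that the closure on the right is the same object that the conditional bipolar theorem produces. This is resolved by the identification $F=(E,\sigma_0(E,F))^\ast$ from the preceding proposition together with the coincidence of closures for stable $L^0$-convex sets recorded in Remark~\ref{r:comparison} and the Mazur-type proposition above, so that no ambiguity remains. Beyond this point the argument is purely a matter of transporting equalities of conditional sets to equalities of their carriers.
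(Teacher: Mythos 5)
Your proposal is correct and follows essentially the same route as the paper, which derives the statement directly from the conditional bipolar theorem \cite[Theorem 5.9]{DJKK13} via the correspondence between stable and conditional structures; you merely make explicit the carrier-level identifications (stability of the polar, the stable convex hull as carrier of the conditional convex hull, and the unambiguity of the closure from Remark \ref{r:comparison}) that the paper leaves implicit.
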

\subsection{$L^0$-normed modules}
Given $L^0$-normed modules $(E_1, \Vert\cdot\Vert_1)$ and $(E_2,\Vert\cdot\Vert_2)$, we denote by $\mathscr{L}(E_1,E_2)$ the set of all continuous $L^0$-linear functions $f\colon E_1 \to E_2$.  
Then $\mathscr{L}(E_1,E_2)$ has the structure of a stable $L^0$-module. 
Define the \emph{$L^0$-operator norm} by 
\[
\Vert f\Vert:=\sup\{ \Vert f(x)\Vert_2 \colon x\in E_1, \Vert x\Vert_1\leq 1\}. 
\]
We have the following version of the Krein-\v{S}mulian theorem, which can be proved by an adaptation of its conditional version \cite[Theorem 5.12]{DJKK13}.  
\begin{theorem} Let $(E,\Vert\cdot\Vert)$ be a  stable  complete $L^0$-normed module and $S\subset E^\ast$ stable $L^0$-convex. 
Then $S$ is $\sigma(E^\ast,E)$-closed if and only if $S\cap \left\{f\in E^\ast \colon \Vert f\Vert\leq r\right\}$ is $\sigma(E^\ast,E)$-closed for each $r\in L^0_{++}$. 
 \end{theorem}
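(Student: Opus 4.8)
The plan is to deduce the statement from the conditional Krein--\v{S}mulian theorem \cite[Theorem 5.12]{DJKK13} by transporting every hypothesis and the conclusion along the dictionary between stable $L^0$-module structures and conditional structures set up in Section \ref{s:sec2}. By Theorem \ref{t:catequiv3} the stable $L^0$-normed module $(E,\Vert\cdot\Vert)$ is the carrier of a conditional normed space $\mathbf{E}$; the first thing to check is that stable completeness of $E$ is equivalent to conditional completeness of $\mathbf{E}$ (matching stable Cauchy sequences with conditional Cauchy nets), so that $\mathbf{E}$ is a conditional Banach space as required by \cite[Theorem 5.12]{DJKK13}. Since the norm topology $\mathscr{T}_0(\{\Vert\cdot\Vert\})$ is already stable, Remark \ref{r:comparison} gives $E^\ast=E^\ast_s=E^\ast_0=E^\ast_{\epsilon,\lambda}$, which is the carrier of the conditional dual $\mathbf{E}^\ast$, and the $L^0$-operator norm on $E^\ast$ is the carrier of the conditional dual norm.

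Next I would translate the three ingredients of the statement. The stable $L^0$-convex set $S$ induces a conditionally convex conditional subset $\mathbf{S}$ of $\mathbf{E}^\ast$. Each closed ball $\{f\in E^\ast\colon \Vert f\Vert\leq r\}$, $r\in L^0_{++}$, is stable $L^0$-convex (its stability follows from $1_{A_k}\Vert f\Vert=\Vert 1_{A_k} f\Vert$), and it is the carrier of the conditional ball $\mathbf{B}_{\mathbf{r}}$ of conditional radius $\mathbf{r}\in\mathbf{R}_{++}$; crucially, the classical quantifier $r\in L^0_{++}$ corresponds precisely to the conditional quantifier $\mathbf{r}\in\mathbf{R}_{++}$, since $L^0_{++}$ is the carrier of the strictly positive conditional reals. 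The intersection $S\cap\{f\in E^\ast\colon \Vert f\Vert\leq r\}$ is again stable and $L^0$-convex and induces $\mathbf{S}\cap\mathbf{B}_{\mathbf{r}}$.

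The decisive point is the translation of weak$^\ast$-closedness. The topology $\sigma(E^\ast,E)$ is the stable locally $L^0$-convex topology induced by the stable collection $\{|\langle\cdot,x\rangle|\colon x\in E\}$ of $L^0$-seminorms, so applying Remark \ref{r:comparison} to the weak$^\ast$-topology shows that the closures $\text{cl}_{\epsilon,\lambda}$, $\text{cl}_0$ and $\text{cl}_s$ coincide on stable $L^0$-convex sets; hence ``$\sigma(E^\ast,E)$-closed'' is unambiguous for both $S$ and $S\cap\{f\colon\Vert f\Vert\leq r\}$, and by \cite[Proposition 3.5]{DJKK13} and \cite[Proposition 3.7]{DJKK13} it is exactly the property that the induced conditional subset is conditionally weak$^\ast$-closed. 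This is the step I expect to be the main obstacle: it requires that the weak$^\ast$-topology on $E^\ast$ be stable and that passing to carriers intertwine conditional weak$^\ast$-closure with classical weak$^\ast$-closure, which is precisely where the hypotheses \emph{stable} and \emph{$L^0$-convex} are genuinely needed.

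With the dictionary assembled, I would simply invoke \cite[Theorem 5.12]{DJKK13}: $\mathbf{S}$ is conditionally weak$^\ast$-closed if and only if $\mathbf{S}\cap\mathbf{B}_{\mathbf{r}}$ is conditionally weak$^\ast$-closed for every $\mathbf{r}\in\mathbf{R}_{++}$. Reading this equivalence back through the carrier correspondence yields that $S$ is $\sigma(E^\ast,E)$-closed if and only if $S\cap\{f\in E^\ast\colon\Vert f\Vert\leq r\}$ is $\sigma(E^\ast,E)$-closed for every $r\in L^0_{++}$, which is the claim. As a sanity check, the easy implication can also be seen directly: each ball is stable $\sigma_s(E^\ast,E)$-compact by the Banach--Alaoglu theorem (Theorem \ref{t:alaoglu}), hence closed, so the intersection of the closed set $S$ with a closed ball is closed; the substance of the theorem lies in the converse, which the transfer argument supplies.
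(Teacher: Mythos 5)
Your proposal is correct and follows essentially the same route as the paper, which itself only says the theorem ``can be proved by an adaptation of its conditional version \cite[Theorem 5.12]{DJKK13}''; your write-up is simply a more explicit execution of that transfer, with the carrier/conditional dictionary (Theorem \ref{t:catequiv3}, Remark \ref{r:comparison}) correctly identified as the place where stability and $L^0$-convexity are used to make ``$\sigma(E^\ast,E)$-closed'' unambiguous.
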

The conditional Eberlein-\v{S}mulian Theorem \cite[Theorem 4.7]{zapata2016eberlein} provides the following module variant. 
\begin{theorem}
Let $(E,\Vert\cdot\Vert)$ be an $L^0$-normed and $S\subset E$ stable. 
Then the following are equivalent. 
\begin{itemize}
\item[(i)]  $S$ is  stable  $\sigma_s(E,E^\ast)$-compact. 
\item[(ii)] For every stable sequence $(x_\n)\subset S$ there exists a stable subsequence $(x_{\n_k})$ which converges in $S$ w.r.t.~the $\sigma_0(E,E^\ast)$-topology. 
\item[(iii)] For every stable sequence $(x_\n)\subset S$ there exists a stable subsequence $(x_{\n_k})$ which converges in $S$  w.r.t.~the $\sigma_s(E,E^\ast)$-topology. 
\end{itemize} 
\end{theorem}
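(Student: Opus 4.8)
The plan is to transport the conditional Eberlein--\v{S}mulian theorem \cite[Theorem 4.7]{zapata2016eberlein} through the dictionary of Section \ref{s:sec2}, and then to reconcile the two weak topologies $\sigma_0(E,E^\ast)$ and $\sigma_s(E,E^\ast)$ that distinguish (ii) from (iii). First I would fix the set-up. Since the topology is induced by the single $L^0$-norm $\|\cdot\|$, the collection $\{\|\cdot\|\}$ is trivially stable, so by Remark \ref{r:comparison} the dual is unambiguous, $E^\ast_0=E^\ast_{\epsilon,\lambda}=E^\ast_s=:E^\ast$, and $\{|\langle\cdot,f\rangle|\colon f\in E^\ast\}$ is a stable collection of $L^0$-seminorms. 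Hence $\sigma_s(E,E^\ast)$ is a genuine stable topology, and by Theorem \ref{t:catequiv3} together with \cite[Propositions 3.5, 3.11]{DJKK13} it is the carrier topology of the conditional weak topology $\sigma(\mathbf{E},\mathbf{E^\ast})$ on the conditional normed space $\mathbf{E}$ induced by $E$; under this equivalence the stable set $S$ corresponds to a conditional subset $\mathbf{S}$.

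Next I would translate the three clauses. By Proposition \ref{thm: StCompactness}, stable $\sigma_s(E,E^\ast)$-compactness of $S$ is exactly the interpretation of conditional weak compactness of $\mathbf{S}$, so (i) is the carrier statement of the compactness clause of the conditional theorem. Stable sequences in $S$ are precisely the interpretations of conditional sequences in $\mathbf{S}$ (being parametrized by $L^0_s(\N)$, the carrier of $\mathbf{N_s}$), stable subsequences correspond to conditional subsequences, and by \cite[Proposition 3.11]{DJKK13} convergence of a stable sequence in $\mathscr{T}_s=\sigma_s(E,E^\ast)$ is the carrier statement of conditional weak convergence. Consequently (iii) is the carrier statement of conditional weak sequential compactness of $\mathbf{S}$, and the conditional Eberlein--\v{S}mulian theorem yields (i)$\Leftrightarrow$(iii) at once.

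It remains to fit (ii) into the scheme, and this is the only genuinely analytic step. Because $\sigma_0\subset\sigma_s$ by Remark \ref{r:comparison}, every $\sigma_s$-convergent sequence is $\sigma_0$-convergent, so (iii)$\Rightarrow$(ii) is immediate; for (ii)$\Rightarrow$(iii) I would prove the sharper statement that a stable sequence which $\sigma_0$-converges to some $x\in S$ already $\sigma_s$-converges to $x$. Fixing a stable finite $\mathscr{N}\subset E^\ast$, written by the structure of stable finite sets as $\mathscr{N}=\sum_j \text{st}(N_j)|a_j$ with $(a_j)\in p(1)$ and each $N_j$ finite, and $r\in L^0_{++}$, one notes that on $a_j$ the stable supremum $\sup_{f\in\mathscr{N}}|\langle\cdot,f\rangle|$ reduces to the finite supremum $\sup_{f\in N_j}|\langle\cdot,f\rangle|$; then $\sigma_0$-convergence against the finite set $N_j$ furnishes a threshold $\mathbf{m}_j\in L^0_s(\N)$, and concatenating $\mathbf{m}_0:=\sum_j 1_{A_j}\mathbf{m}_j$ and using, for $\mathbf{m}\geq\mathbf{m}_0$, the stable maximum $\mathbf{m}\vee\mathbf{m}_j$ together with the consistency $1_{A_j}x_{\mathbf{m}}=1_{A_j}x_{\mathbf{m}\vee\mathbf{m}_j}$ yields $1_{A_j}\sup_{f\in N_j}|\langle x_{\mathbf{m}}-x,f\rangle|<1_{A_j}r$, and summing over $j$ gives membership in the $\sigma_s$-neighbourhood. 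This closes the cycle (i)$\Leftrightarrow$(iii)$\Rightarrow$(ii)$\Rightarrow$(iii). The main obstacle I anticipate is precisely this localization: one must verify that $\sigma_0$-convergence of a stable net can be read off on each atom of a countable partition and that the countably many classical thresholds can be concatenated into a single stable threshold compatible with the directed structure of $L^0_s(\N)$, for which the stable-maximum and consistency tricks are what make the argument go through. A subsidiary point worth checking is that the limit supplied by the conditional theorem lies in $S$ and not in a strictly larger stable set, which is guaranteed by the coincidence $\text{cl}_0(S)=\text{cl}_s(S)$ of Remark \ref{r:comparison} and by $\mathbf{S}$ being a conditional subset.
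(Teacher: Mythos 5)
Your proposal is correct and follows essentially the same route as the paper: (i)$\Leftrightarrow$(iii) by adapting the conditional Eberlein--\v{S}mulian theorem, (iii)$\Rightarrow$(ii) trivially from $\sigma_0\subset\sigma_s$, and (ii)$\Rightarrow$(iii) by decomposing a stable neighbourhood $U\in\mathscr{U}_s(x)$ as a concatenation $\sum_k 1_{A_k}U_k$ of $\mathscr{T}_0$-neighbourhoods and concatenating the resulting thresholds into a single $\m\in L^0_s(\N)$. Your extra care with the stable maximum and the consistency of the net on each $A_j$ just makes explicit the step the paper leaves implicit.
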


\begin{proof}
The equivalence $(i)\Leftrightarrow (iii)$ follows from a rountine adaptation of the proof of \cite[Theorem 4.7]{zapata2016eberlein}.
$(iii)\Rightarrow(ii)$ is trivially satisfied since $\sigma_s(E,E^\ast)$ is finer than $\sigma_0(E,E^\ast)$. 

$(ii)\Rightarrow(iii)$: Let $(x_\n)$ be a stable sequence in $S$ and $(x_{\n_k})$ a stable subsequence which converges to $x\in S$ w.r.t.~the $\sigma_0(E,E^\ast)$-topology. 
Any $U\in\mathscr{U}_s(x)$ is of the form $U=\sum_k 1_{A_k}U_k$ with $(a_k)\in p(1)$ and $U_k\in \mathscr{U}_0(x)$ for all $k\in\N$. 
For each $k\in\N$, choose $\m_k\in L^0_s(\N)$ such that $x_{\n_{k}}\in U_k$ for all $k\geq\m_k$. 
Let $\m:=\sum_k 1_{A_k}\m_k$. 
Then, $x_{\n_{k}}\in U$ for each $k\geq\m$.  
\end{proof}

Finally, a transcription of \cite[Theorem 2.5]{OZ2017stabil} yields the following module version of a variant of the James compactness theorem. 

\begin{theorem}
Let $(E,\Vert\cdot\Vert)$ be a  stable  complete $L^0$-normed module such that $\{f\in E^\ast\colon \|f\|\leq 1\}$ is  stable  sequentially compact. 
Let $S\subset E$ be stable and $\sigma_s(E,E^\ast)$-closed. 
Then $S$ is  stable  $\sigma_s(E,E^\ast)$-compact if and only if for each $f\in E^\ast$ there exists $y\in E$ such that $f(y)= \sup_{x\in S} f(x)$.
\end{theorem}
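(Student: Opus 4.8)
The plan is to transfer the statement to the conditional setting, invoke the conditional James theorem \cite[Theorem 2.5]{OZ2017stabil}, and transfer the conclusion back, using the categorical equivalence of Theorem \ref{t:catequiv3} together with the dictionary between stable and conditional structures collected in Remark \ref{r:comparison} and in \cite{DJKK13}. First I would pass from the stable complete $L^0$-normed module $(E,\Vert\cdot\Vert)$ to its associated conditional Banach space $\mathbf{E}$: the $L^0$-norm is the carrier of a conditional norm, and stable completeness translates into conditional completeness, where one uses that Cauchy stable sequences correspond to conditional Cauchy sequences and that convergence of stable sequences matches conditional convergence, cf.~\cite[Proposition 3.22]{DJKK13}. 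By Remark \ref{r:comparison}, the dual module $E^\ast=E^\ast_s$ is the carrier of the conditional dual $\mathbf{E^\ast}$, so the hypothesis that $\{f\in E^\ast\colon\Vert f\Vert\leq 1\}$ is stable sequentially compact becomes the statement that the conditional unit ball of $\mathbf{E^\ast}$ is conditionally sequentially compact.

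Next I would translate the data attached to $S$. Since $S$ is stable and $\sigma_s(E,E^\ast)$-closed, the closure correspondence of Remark \ref{r:comparison}, which rests on \cite[Proposition 3.5]{DJKK13} and \cite[Proposition 3.7]{DJKK13}, shows that $\mathbf{S}$ is a conditionally weakly closed conditional subset of $\mathbf{E}$; here $\sigma_s(E,E^\ast)$ is precisely the carrier topology of the conditional weak topology $\sigma(\mathbf{E},\mathbf{E^\ast})$. The remaining, and most delicate, dictionary entry concerns the James attainment condition. I would verify that the clause \emph{for each $f\in E^\ast$ there exists $y$ with $f(y)=\sup_{x\in S}f(x)$} is the carrier reading of the conditional assertion that every conditional continuous linear functional $\mathbf{f}\in\mathbf{E^\ast}$ attains its conditional supremum on $\mathbf{S}$. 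This uses two facts: that the conditional supremum has carrier the essential supremum $\sup_{x\in S}f(x)$, and that the conditional existential quantifier is matched by existence in the carrier precisely because $E$ and $S$ are stable, so local maximizers over the pieces of an exhausting partition in $p(1)$ glue to a single global maximizer.

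With the dictionary in place, the conditional James theorem \cite[Theorem 2.5]{OZ2017stabil} applies verbatim: under conditional completeness and conditional sequential compactness of the dual unit ball, a conditionally weakly closed $\mathbf{S}$ is conditionally $\sigma(\mathbf{E},\mathbf{E^\ast})$-compact if and only if every $\mathbf{f}\in\mathbf{E^\ast}$ attains its conditional supremum on $\mathbf{S}$. I would then transfer the conclusion back: by the definition of stable compactness and the base correspondence \cite[Proposition 3.5]{DJKK13} together with \cite[Proposition 3.25]{DJKK13}, conditional weak compactness of $\mathbf{S}$ is exactly stable $\sigma_s(E,E^\ast)$-compactness of $S$, which yields the claimed equivalence.

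I expect the main obstacle to be the careful verification of the attainment dictionary entry described in the second paragraph. Unlike closedness and compactness, which are purely topological and already handled by the cited correspondences, the James condition mixes an existential quantifier with the order structure of $L^0$; one must check that essential-supremum attainment combined with a gluing argument on an exhausting partition reproduces exactly the conditional \emph{attains its conditional supremum} statement, and that the universal quantifier over $f\in E^\ast$ faithfully reflects the conditional universal quantifier over $\mathbf{E^\ast}$. All remaining verifications are routine transcriptions of the kind already used throughout Section \ref{s:funcana}.
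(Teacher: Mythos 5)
Your proposal is correct and follows exactly the route the paper takes: the paper's entire proof is the one-line remark that the theorem is ``a transcription of [OZ2017stabil, Theorem 2.5]'', and your argument simply carries out that transcription explicitly via the categorical equivalence and the stable/conditional dictionary, including the attainment clause that the paper leaves implicit.
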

\subsection{Stable metric spaces}
The following statement is a translation of the conditional version of the Baire category theorem (see \cite[Theorem 3.3]{zapata2016eberlein}). 
\begin{theorem}
Let $(S,d)$ be a  stable  complete metric space. 
Suppose that $(E_\n)$ is a stable sequence of stable closed subsets of $S$ with $S=\cup E_\n$. 
Then there exist $x\in S$, $r\in L^0_{++}$ and $\n\in L^0_s(\N)$ such that $B_r(x)\subset E_\n$.
\end{theorem}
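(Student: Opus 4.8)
The plan is to deduce the statement from the conditional Baire category theorem \cite[Theorem 3.3]{zapata2016eberlein} via the dictionary between stable and conditional structures, exactly as in the other transcriptions in this section. First I would pass from the stable complete metric space $(S,d)$ to the associated conditional set $\mathbf{S}$ equipped with the conditional metric $\mathbf{d}$ induced by $d$; the collection of stable balls $B_r(x)$ is a stable collection of stable sets and hence, by \cite[Proposition 3.5]{DJKK13}, generates the conditional metric topology on $\mathbf{S}$. The stable sequence $(E_\n)$ of stable closed subsets corresponds to a conditional sequence $(\mathbf{E}_\n)$ of conditional closed sets indexed by the conditional natural numbers, where \cite[Proposition 3.7]{DJKK13} matches stable closure with conditional closure, and the hypothesis $S=\cup E_\n$ translates into the conditional covering $\mathbf{S}=\bigcup_\n \mathbf{E}_\n$.

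The two correspondences that must be verified with care are completeness and the covering. For completeness I would check that stable completeness of $(S,d)$ --- every Cauchy stable sequence, viewed as a net, converges --- is precisely conditional completeness of $(\mathbf{S},\mathbf{d})$; this is the point at which the stable-finite threshold $\n_0\in L^0_s(\N)$ in the definition of a Cauchy stable sequence matches the conditional Cauchy condition, and it should follow from \cite[Proposition 3.11]{DJKK13} together with the correspondence between stable sequences and conditional sequences recorded in Section \ref{s:notation}. For the covering, the relevant observation is that $\cup E_\n$ is the stable union, i.e.~the stable hull of $\cup_{n\in\N} E_n$, which is exactly the carrier of $\bigcup_\n \mathbf{E}_\n$. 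Once these identifications are in place, the conditional Baire theorem yields a conditional point, a conditional radius, and a conditional natural number $\n$ such that the corresponding conditional ball is a conditional subset of $\mathbf{E}_\n$; reading this back through the carrier functor produces $x\in S$, $r\in L^0_{++}$ and $\n\in L^0_s(\N)$ with $B_r(x)\subset E_\n$, which is the assertion.

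If instead one wanted a self-contained argument, I would run the classical nested-ball proof inside the stable category: assuming the conclusion fails, each $E_n$ has empty interior, so each $S\setminus E_n$ is open and dense in the stable topology, and I would inductively build a stable sequence of nested stable balls $B_{r_n}(x_n)$ with $r_n\leq 2^{-n}$, $\overline{B_{r_{n+1}}(x_{n+1})}\subset B_{r_n}(x_n)$ and $B_{r_n}(x_n)\cap E_n=\emptyset$, so that the centres form a Cauchy stable sequence converging by stable completeness to a point lying in no $E_n$, contradicting the covering. The main obstacle in this route is not the topology but the selection: at each step the centre $x_{n+1}$ and radius $r_{n+1}$ must be produced as honest elements defined on all of $\Omega$, whereas density only guarantees suitable choices on positive parts of $\mathcal{A}$. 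Overcoming this requires an exhaustion argument --- taking the essential supremum over $\mathcal{A}$ of the parts on which an admissible ball exists and gluing countably many local choices into a single global one via the countable chain condition and the stability of $S$ --- entirely analogous to the exhaustion step in the proof of Theorem \ref{t:basis}. The same mechanism is what forces the threshold index in the Cauchy estimate, and ultimately the index $\n$ in the conclusion, to be stable finite rather than a fixed integer in $\N$.
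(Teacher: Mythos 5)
Your primary route is exactly the paper's: the theorem is obtained there simply as a translation of the conditional Baire category theorem \cite[Theorem 3.3]{zapata2016eberlein} through the stable/conditional correspondence, and your first paragraph supplies precisely the identifications (completeness, closedness, the stable union) that this transcription requires. The self-contained nested-ball argument you sketch as an alternative is not in the paper, but it is not needed for the claim.
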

We have the following module variant of the uniform boundedness principle as a consequence of \cite[Theorem 3.4]{zapata2016eberlein}. 
\begin{theorem}
Let $(E_1, \Vert\cdot\Vert_1)$ and $(E_2,\Vert\cdot\Vert_2)$ be $L^0$-normed modules and $E_1$  stable  complete. 
Suppose that $S$ is a stable subset of $\mathscr{L}(E_1,E_2)$ such that for every $x\in E_1$ there exists $r_x\in L^0$ with $\Vert f(x)\Vert_2 \leq r_x$ for all $f\in S$. 
Then there exists $r\in L^0_{++}$ such that $\Vert f\Vert\leq r$ for all $f\in S$.
\end{theorem}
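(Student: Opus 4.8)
The plan is to transcribe the statement into conditional set theory and apply the conditional uniform boundedness principle \cite[Theorem 3.4]{zapata2016eberlein}. First I would pass, via the categorical correspondence of Theorem \ref{t:catequiv3} together with \cite[Theorem 4.4]{DJKK13}, to the conditional normed spaces $\mathbf{E_1}$ and $\mathbf{E_2}$ whose carriers are the stable $L^0$-normed modules $E_1$ and $E_2$; here the $L^0$-norms are the carriers of conditional norms in the sense of the definition in Section \ref{s:sec2}. Stable completeness of $E_1$ translates into conditional completeness of $\mathbf{E_1}$, so that $\mathbf{E_1}$ is a conditional Banach space. I would then check that the stable $L^0$-module $\mathscr{L}(E_1,E_2)$ of continuous $L^0$-linear maps, equipped with the $L^0$-operator norm defined above, is the carrier of the conditional normed space of conditional continuous linear maps, with the conditional operator norm; this is where one uses that a module homomorphism between stable modules is automatically stable and that the supremum defining $\Vert f\Vert$ is a stable one.

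Next I would translate the two hypotheses. The stable subset $S\subset\mathscr{L}(E_1,E_2)$ induces a conditional subset $\mathbf{S}$. The pointwise boundedness assumption says that for each $x\in E_1$ there is $r_x\in L^0$ with $\Vert f(x)\Vert_2\leq r_x$ for all $f\in S$; since the carrier of the conditional reals is exactly $L^0$ by \cite[Theorem 4.4]{DJKK13} and since elements of $E_1$ are precisely the conditional points of $\mathbf{E_1}$, this is verbatim the statement that $\mathbf{S}$ is conditionally pointwise bounded. With both conditional completeness and conditional pointwise boundedness in hand, the conditional uniform boundedness principle yields a conditional real $\mathbf{r}>0$ such that $\Vert\mathbf{f}\Vert\leq\mathbf{r}$ for every $\mathbf{f}\in\mathbf{S}$. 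Reading this back on carriers produces $r\in L^0_{++}$ with $\Vert f\Vert\leq r$ for all $f\in S$, which is the claim. I expect the main obstacle to be the bookkeeping in the first step: confirming that the conditional operator norm really has the displayed $L^0$-operator norm as its carrier, so that a single conditional bound $\mathbf{r}$ corresponds to one $r\in L^0_{++}$ valid simultaneously for all $f\in S$, rather than to an $f$-dependent family of bounds.

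As an alternative that stays entirely on the module side, I would run the classical Baire argument with stable natural numbers. Set $E_\n:=\{x\in E_1\colon \Vert f(x)\Vert_2\leq\n \text{ for all } f\in S\}$ for $\n\in L^0_s(\N)$, note each $E_\n$ is stable and closed (an intersection over $f\in S$ of preimages of stable closed balls under the continuous maps $f$), and use that each $r_x$ is dominated by some stable integer to obtain $\bigcup_\n E_\n=E_1$. The stable Baire category theorem stated above then produces $x_0\in E_1$, $\rho\in L^0_{++}$ and $\n\in L^0_s(\N)$ with $B_\rho(x_0)\subset E_\n$. For $\Vert x\Vert_1\leq 1$ one has $x_0+(\rho/2)x\in B_\rho(x_0)$, so the triangle inequality around $f(x_0)$ gives $\Vert f((\rho/2)x)\Vert_2\leq 2\n$, and $L^0$-homogeneity of the norm yields $\Vert f(x)\Vert_2\leq 4\n/\rho$; hence $r:=4\n/\rho\in L^0_{++}$ works. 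This route relies only on the stable Baire category theorem, itself the translation of \cite[Theorem 3.3]{zapata2016eberlein}.
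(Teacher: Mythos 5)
Your first route is exactly the paper's proof: the paper derives this statement directly as the module transcription of the conditional uniform boundedness principle \cite[Theorem 3.4]{zapata2016eberlein}, with the translation mediated by the categorical correspondence of Section \ref{s:sec2}, and gives no further argument. Your second, self-contained route via the stable Baire category theorem is also sound --- the family $(E_\n)_{\n\in L^0_s(\N)}$ is indeed a stable sequence of stable closed sets (for $\n=\sum 1_{A_k}\n_k$ one checks $E_\n=\sum 1_{A_k}E_{\n_k}$ using $1_{A_k}\Vert f(x)\Vert_2=\Vert f(1_{A_k}x)\Vert_2$), the union is all of $E_1$ because every $r_x\in L^0$ is dominated by its ceiling in $L^0_s(\N)$, and the dilation argument from $B_\rho(x_0)\subset E_\n$ to $\Vert f\Vert\leq 4\n/\rho$ goes through verbatim with $L^0$-homogeneity; this buys a proof that avoids the conditional-set machinery at the cost of invoking the stable Baire theorem, which the paper itself only obtains by the same kind of transcription.
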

Next, we extend Banach's fixed point theorem to $L^0$-modules. 
\begin{theorem}
Let $(S,d)$ be a  stable  complete metric space and $T:S\rightarrow S$ a stable continuous function such that there exists $r\in L^0$ with $0\leq r< 1$ and such that 
$$d(T(x),T(y))\leq r d(x,y)\quad\text{ for all } x,y\in S.$$ 
Then there exists a unique $z\in S$ satisfying $T(z)=z$.
Furthermore, for any $x_1\in S$, the stable sequence $(x_\n)$ defined by $x_\n:=\sum_{k\in\N} 1_{\{\n=k\}}x_k$, where $x_{n+1}:=T(x_n)$ for $n\geq 2$, converges to $z$. 
\end{theorem}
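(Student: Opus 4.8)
The plan is to run the classical Picard iteration and then promote the pointwise geometric decay of the increments to a \emph{stable} Cauchy estimate, the latter being the only genuinely non-classical ingredient.

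First I would record the increment bound. Since $x_{\n+1}=T(x_\n)$ and $d(T(x),T(y))\leq r\,d(x,y)$, an induction on the classical indices gives $d(x_{n+1},x_n)\leq r^{n-1}d(x_2,x_1)$ as an a.e.\ inequality in $L^0$, for every $n\in\N$. Summing through the triangle inequality together with the geometric series, which is legitimate as an a.e.\ computation because $0\leq r<1$, yields for $m>n$ the bound
\[
d(x_m,x_n)\leq \sum_{k=n}^{m-1} r^{k-1}d(x_2,x_1)\leq \frac{r^{n-1}}{1-r}\,d(x_2,x_1).
\]

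The crux is to turn this into stable Cauchyness of $(x_\n)$. Fix $s\in L^0_{++}$. Because $0\leq r<1$ a.e., the quantity $\tfrac{r^{n-1}}{1-r}d(x_2,x_1)$ decreases to $0$ a.e.\ as $n\to\infty$, but \emph{not} uniformly in $\omega$, since the contraction constant $r$ need not be bounded away from $1$; hence no single deterministic threshold works. Instead I would define $\n_0\in L^0_s(\N)$ by letting $\n_0(\omega)$ be the least $n\geq 1$ with $\tfrac{r(\omega)^{n-1}}{1-r(\omega)}d(x_2,x_1)(\omega)\leq s(\omega)$; this is a measurable, $\N$-valued function and thus a step function in $L^0_s(\N)$. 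Because $r^{\,\cdot}$ is a.e.\ decreasing, for stable indices $\n,\m\geq \n_0$ the displayed bound, applied cell-by-cell on the partitions defining $\n$ and $\m$ and using symmetry to reduce to $\n\wedge\m$, gives $d(x_\n,x_\m)\leq \tfrac{r^{(\n\wedge\m)-1}}{1-r}d(x_2,x_1)\leq s$. Thus $(x_\n)$ is Cauchy, and by stable completeness it converges (as a net) to some $z\in S$.

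Finally I would identify $z$ as the unique fixed point, using the contraction in place of abstract continuity. From $d(x_{\n+1},T(z))=d(T(x_\n),T(z))\leq r\,d(x_\n,z)\to 0$ we get $x_{\n+1}\to T(z)$, while the shifted net $(x_{\n+1})$ also converges to $z$; Hausdorffness of the stable metric topology then forces $T(z)=z$. For uniqueness, any other fixed point $z'$ satisfies $d(z,z')=d(T(z),T(z'))\leq r\,d(z,z')$, so $(1-r)d(z,z')=0$, and since $1-r>0$ a.e.\ axiom (i) yields $z=z'$. The main obstacle is the second step: the failure of a uniform contraction constant breaks the classical choice of a single $N$, and the whole argument turns on selecting the threshold \emph{measurably} as an element of $L^0_s(\N)$, which is precisely where the stable framework is indispensable.
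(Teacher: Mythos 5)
Your proof is correct, but it follows a genuinely different route from the paper's. The paper adapts Palais's proof of the contraction principle: it first extends $T$ to stable iterates $T^{\n}$ for $\n\in L^0_s(\N)$ by concatenation, derives the ``fundamental contraction inequality'' $d(x_1,x_2)\leq \frac{1}{1-r}\bigl(d(x_1,T(x_1))+d(x_2,T(x_2))\bigr)$, and reads off both uniqueness (immediately) and the Cauchy estimate $d(T^{\n}(x),T^{\m}(x))\leq \frac{r^{\n}+r^{\m}}{1-r}\,d(x,T(x))$ (by substituting the iterates), concluding by stable completeness; no geometric series is summed. You instead run the classical telescoping estimate $d(x_m,x_n)\leq \frac{r^{n-1}}{1-r}d(x_2,x_1)$ and then promote it to stable Cauchyness by an explicit measurable selection of the threshold index $\n_0\in L^0_s(\N)$, applied cell-by-cell on the partitions defining $\n$ and $\m$. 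This is a real difference in emphasis: the paper's bound still requires knowing that $r^{\n}\to 0$ as $\n\to\infty$ in the stable sense --- a fact it passes over with the remark ``since $r<1$'' --- and your construction of $\n_0$ is precisely the argument that justifies this, so your version makes the single non-classical ingredient fully explicit. What the paper's route buys in exchange is a one-line uniqueness proof and a cleaner identification of the limit without invoking Hausdorffness of the stable metric topology separately; your identification of $z$ as the fixed point via $d(x_{\n+1},T(z))\leq r\,d(x_{\n},z)$ and uniqueness via $(1-r)d(z,z')=0$ are both sound, the latter using that $1-r>0$ almost everywhere together with axiom (i) of a stable metric.
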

\begin{proof}
Our proof is based on a simple proof of the classical contraction principle given in \cite{palais2007simple}. 
For $\n\in L^0_s(\N)$ with $n=\sum n_k|a_k$, we define
\[
T^n(x):=\sum T^{n_k}(x)|a_k,\quad s\in S. 
\]
From stability we obtain
\begin{equation}
\label{eq: contraction}
d({T}^\n(x_1),T^\n(x_2))\leq r^\n d(x_1,x_2)\quad\text{ for all }x_1,x_2\in S.	
\end{equation}
Now for fixed $x_1,x_2\in S$, by triangle inequality, 
\[
d(x_1,x_2)\leq d(x_1,T(x_1))+d(T(x_1),T(x_2))+d(T(x_2),x_2), 
\]
from which it follows that 
\begin{equation}
\label{eq: contractionII}
d(x_1,x_2)\leq \frac{1}{1-r}(d(x_1,T(x_1))+d(x_2,T(x_2)))\quad\text{ for all }x_1,x_2\in S.
\end{equation}
As for uniqueness, if $z_1,z_2$ satisfy $z_1=T(z_1)$ and $z_2=T(z_2)$, then from \eqref{eq: contractionII} one has $d(z_1,z_2)=0$.
Now fix $x\in S$. 
For $\n,\m\in L^0_s(\N)$, if we replace $x_1$ and $x_2$ by $T^\n(x)$ and $T^\m(x)$ in \eqref{eq: contractionII}, then we obtain from \eqref{eq: contraction}
\begin{align*}
d(T^\n(x),T^\m(x))&\leq \frac{1}{1-r}(d(T^\n(x),T^\n(T(x)))+d(T^\m(x),T^\m(T(x))))\\
&\leq \frac{r^\n+r^\m}{1-r}d(x,T(x)). 
\end{align*}
Since $r<1$, we obtain $\lim_{\n,\m}d(T^\n(x),T^\m(x))=0$. 
Since $S$ is conditional complete, the conditional sequence $(T^\n(x))$ has a conditional limit $z\in S$. 
By letting first $\m$ and second $\n$ tend to $\infty$, we have $T(z)=z$. 
\end{proof}
Finally, we prove a conditional version of the \`{A}rzela-Ascoli theorem which is based on an adaptation of a proof of the classical result in \cite{hanche2009kolmogorov}. 
Let $(S,\mathscr{T})$ be a stable topological space. 
Denote by $\mathscr{C}(S,L^0)$ the set of all stable continuous functions $f\colon S \to L^0$.  
Notice that $\mathscr{C}(S,L^0)$ is a stable $L^0$-module. 
By Theorem \ref{thm: lowSemCompact},  for  stable  compact $S$, 
$$d_\infty(f,g):=\max\left\{|f(x)-g(x)| \colon x\in S \right\},\quad f,g\in\mathscr{C}(S,L^0),$$  
is a well-defined stable metric.
\begin{definition}
A stable subset $M\subset \mathscr{C}(S,L^0)$ is  \emph{ stable  equicontinuous} if for every $x\in S$ and $r\in L^0_{++}$ there exists a  neighbourhood  $V$ of $x$ such that $|f(x)-f(y)|\leq r$ for all $y\in V$ and $f\in M$.
\end{definition}
Finally, we state a module variant of the Arzel\`a-Ascoli theorem. 
\begin{theorem}
Let $(S,\mathscr{T})$ be a  stable  compact topological space. 
A stable subset $M$ of $\mathscr{C}(S,L^0)$ is  stable  totally bounded in the stable metric $d_\infty$ if and only if it is  pointwise  stable  bounded and  stable  equicontinuous. 	
\end{theorem}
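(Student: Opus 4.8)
The plan is to transpose the classical two-step proof of Arzel\`a--Ascoli, replacing every appeal to finiteness by its stable counterpart and every compactness extraction by either stable compactness (Proposition \ref{thm: StCompactness}) or stable Heine--Borel (Theorem \ref{thm: GenHeineB}, Corollary \ref{thm: HeineBorel}). The observation used throughout is that for each $x\in S$ the evaluation $f\mapsto f(x)$ is a stable function $\mathscr{C}(S,L^0)\to L^0$ with $|f(x)-g(x)|\le d_\infty(f,g)$, i.e.\ it is $1$-Lipschitz. For the direction that stable total boundedness implies pointwise stable boundedness and stable equicontinuity, fix $x\in S$: a $1$-Lipschitz stable map sends a stable totally bounded set to a stable totally bounded, hence stable bounded, set, so $\{f(x)\colon f\in M\}$ is stable bounded. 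For stable equicontinuity, fix $x\in S$ and $r\in L^0_{++}$ and cover $M$ by a stable finite family of balls $B_{r/3}(f_\m)$; each centre $f_\m$ is stable continuous at $x$, giving a neighbourhood on which $|f_\m(x)-f_\m(y)|<r/3$, and intersecting this stable finite family of neighbourhoods yields a single neighbourhood $V$ on which the triangle inequality $|f(x)-f(y)|\le|f(x)-f_\m(x)|+|f_\m(x)-f_\m(y)|+|f_\m(y)-f(y)|$ forces $|f(x)-f(y)|\le r$ for all $f\in M$.

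Conversely, assume $M$ is pointwise stable bounded and stable equicontinuous, and fix $r\in L^0_{++}$. Stable equicontinuity assigns to each $x\in S$ a neighbourhood $V_x$ on which $|f(x)-f(y)|<r/4$ for all $f\in M$; passing to the stable hull of $\{V_x\}$ produces a stable open cover of $S$, from which stable compactness (Proposition \ref{thm: StCompactness}) extracts a stable finite subcover whose centres form a stable finite family $(x_\m)_{\1\le\m\le\n}$. The evaluation map $f\mapsto (f(x_\m))_{\1\le\m\le\n}$ is stable from $M$ into $(L^0)^\n$, and its image is stable bounded by pointwise stable boundedness, hence stable totally bounded by stable Heine--Borel (Corollary \ref{thm: HeineBorel} together with Theorem \ref{thm: GenHeineB}). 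A stable finite $(r/4)$-net of that image lifts to a stable finite family $(f_j)$ in $M$, and for arbitrary $f\in M$ and $y\in S$ one selects, along the partition realising the subcover, an index $\m$ with $y\in V_{x_\m}$ and estimates $|f(y)-f_j(y)|\le|f(y)-f(x_\m)|+|f(x_\m)-f_j(x_\m)|+|f_j(x_\m)-f_j(y)|<r$. Thus $(f_j)$ is an $r$-net for $M$ in $d_\infty$, and $M$ is stable totally bounded.

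I expect the main obstacle to be the stable bookkeeping rather than the metric estimates, which are the classical ones performed partition-wise. Each ``choose finitely many and intersect, or take a maximum over them'' step must be realised as a concatenation $\n=\sum_k 1_{A_k}n_k$ of classical finite selections along some $(a_k)\in p(1)$, and one must verify that all intermediate collections (the covering balls, the neighbourhoods $V_x$, the evaluation image, and the approximating net) are genuinely stable collections of stable sets, so that Proposition \ref{thm: StCompactness} and the stable Heine--Borel theorem apply. The most delicate point is extracting the stable finite subcover: one must first replace the raw family $(V_x)_{x\in S}$ by a bona fide stable open cover before invoking stable compactness, and then confirm that the resulting centres index a stable finite family, so that evaluation lands in the finitely ranked module $(L^0)^\n$ and Corollary \ref{thm: HeineBorel} becomes available.
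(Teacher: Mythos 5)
Your proposal is correct and follows essentially the same route as the paper's proof: both directions use the same evaluation map $f\mapsto (f(x_\m))_{\1\le\m\le\n}$ into $(L^0)^\n$, the same appeal to stable compactness for the subcover, the stable Heine--Borel theorem for the image, and the same triangle-inequality estimates. The only differences are cosmetic --- you inline the net-lifting step that the paper isolates as Lemma \ref{lem: AA}, and in the equicontinuity direction you correctly intersect the stable finite family of neighbourhoods where the paper's text writes a union.
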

\begin{lemma}\label{lem: AA}
Let $(S,d)$ be a stable metric space. 
Suppose that for every $r\in L^0_{++}$ there exist $s_r\in L^0_{++}$, a stable metric space $(S_r,d_r)$ in some $L^0$-module $E_r$, and a stable function $f_r:S\rightarrow S_r$ so that $f_r(S)$ is  stable  totally bounded and for $x,y\in S$
\begin{equation}
\label{eq: ineMetr}
d_r(f_r(x),f_r(y))\leq s_r\text{ implies }d(x,y)\leq r.
\end{equation}
Then $S$ is  stable  totally bounded.
\end{lemma}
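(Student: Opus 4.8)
The plan is to prove stable total boundedness directly from the definition: for an arbitrary but fixed $r\in L^0_{++}$ I would produce a stable finite $r$-net $\tilde S\subset S$ with $S=\bigcup_{x\in\tilde S}B_r(x)$, imitating the classical Arzel\`a--Ascoli device of pulling total boundedness back through the map $f_r$. To reconcile the non-strict conclusion of \eqref{eq: ineMetr} with the strict inequality defining the balls $B_r$, I would apply the hypothesis not to $r$ but to $r/2$, obtaining $s:=s_{r/2}\in L^0_{++}$, a stable metric space $(S',d')$ and a stable map $f:=f_{r/2}\colon S\to S'$ such that $f(S)$ is stable totally bounded and $d'(f(x),f(y))\leq s$ implies $d(x,y)\leq r/2$.

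Since $f(S)$ is stable totally bounded, there is a stable finite subset $T\subset f(S)$ with $f(S)=\bigcup_{z\in T}B_s(z)$, the balls taken in $S'$. The crux of the argument, which I expect to be the main obstacle, is to lift $T$ to a \emph{stable finite} set of pre-images in $S$: the classical proof merely selects finitely many pre-images, whereas here the chosen points must assemble into a genuinely stable finite set and applying $f$ must return exactly $T$. Here I would invoke the structure theorem for stable finite sets, giving $(a_k)\in p(1)$, $(n_k)\subset\mathbb{N}$ and finite sets $T_k=\{z^k_1,\dots,z^k_{n_k}\}\subset T$ with $T=\sum_k\text{st}(T_k)|a_k$. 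As each $z^k_j$ lies in $T\subset f(S)$, I may pick a pre-image $x^k_j\in S$ with $f(x^k_j)=z^k_j$; setting $X_k:=\{x^k_1,\dots,x^k_{n_k}\}$ and $\tilde S:=\sum_k\text{st}(X_k)|a_k$ produces a stable finite subset of $S$.

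The key verification is then $f(\tilde S)=T$, which follows from the stability of $f$. For $x=\sum_k\xi_k|a_k\in\tilde S$ with $\xi_k\in\text{st}(X_k)$, stability gives $f(x)=\sum_k f(\xi_k)|a_k$, and since $f$ sends a concatenation of the $x^k_j$ to the corresponding concatenation of the $z^k_j$, one gets $f(\xi_k)\in\text{st}(T_k)$ and hence $f(x)\in T$; reading the same computation backwards lifts an arbitrary $z=\sum_k\zeta_k|a_k\in T$ to a point of $\tilde S$, yielding the reverse inclusion. With $f(\tilde S)=T$ established, the net property is immediate: given $y\in S$ we have $f(y)\in f(S)=\bigcup_{z\in T}B_s(z)$, so $d'(f(y),z)<s$ for some $z\in T=f(\tilde S)$; writing $z=f(x)$ with $x\in\tilde S$ gives $d'(f(y),f(x))<s$, whence $d(y,x)\leq r/2<r$ by \eqref{eq: ineMetr} applied to $r/2$, i.e.\ $y\in B_r(x)$. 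Thus $S=\bigcup_{x\in\tilde S}B_r(x)$, and as $r\in L^0_{++}$ was arbitrary, $S$ is stable totally bounded.

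In summary, the only genuine difficulty is the stable bookkeeping: producing a stable finite lift $\tilde S$ of the net $T$ and checking $f(\tilde S)=T$, both of which are handled by the structure theorem for stable finite sets together with the stability of $f$, while the half-radius trick serves only to upgrade the non-strict conclusion of \eqref{eq: ineMetr} to the strict inequality of the open balls.
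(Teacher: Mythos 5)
Your proof is correct and follows essentially the same route as the paper: pull the stable finite net of the stable totally bounded image $f_r(S)$ back through $f_r$ and use \eqref{eq: ineMetr} to convert it into a net for $S$. The paper simply asserts the existence of a stable finite $N\subset S$ whose images center a covering of $f_r(S)$, whereas you supply the two details it glosses over — the lifting of the stable finite net $T$ to a stable finite set of pre-images via the structure theorem and the stability of $f$, and the half-radius trick reconciling the non-strict conclusion of \eqref{eq: ineMetr} with the strict inequality defining the balls $B_r$ — both of which are legitimate refinements rather than a different argument.
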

\begin{proof}
Let $r\in L^0_{++}$. 
Choose $s_r\in L^0_{++}$, a stable metric space $(S_r,d_r)$ and $f_r:S\rightarrow S_r$ according to the assumptions of the statement above. 
Since $f_r(S)$ is  stable  totally bounded, there exists a  stable  finite subset $N\subset S$ such that
 $f_r(S)=\underset{x\in S}\cup B_{s_r}(f_r(x))$. 
By (\refeq{eq: ineMetr}), we have $S=\underset{x\in S}\cup B_r(x)$. 
Thus $S$ is  stable  totally bounded.
\end{proof}

\begin{theorem}\label{t:arzela-ascoli}
Let $(K,\mathscr{T})$ be a  stable  compact topological space. 
Then a stable subset $M$ of $\mathscr{C}(K,L^0)$ is  stable  totally bounded with respect to $d_\infty$ if and only if  
\begin{itemize}
\item[(i)] $M_x:=\{ f(x) \colon f\in M\}$ is  stable  bounded for each $x\in K$, and
\item[(ii)] $M$ is conditional equicontinuous.
\end{itemize}
\end{theorem}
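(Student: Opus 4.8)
The plan is to prove the two implications of Theorem~\ref{t:arzela-ascoli} separately: the forward direction by a finite-net argument, and the reverse direction by reducing to Lemma~\ref{lem: AA} via a ``sampling map''. For the forward direction, suppose $M$ is stable totally bounded in $d_\infty$. Since $|f(x)-g(x)|\le d_\infty(f,g)$ for every $x\in K$, the evaluation $\mathrm{ev}_x\colon f\mapsto f(x)$ is $1$-Lipschitz and stable, so it carries the stable totally bounded set $M$ onto a stable totally bounded, hence stable bounded, subset $M_x$ of $L^0$; this yields (i). For (ii) I would fix $x\in K$ and $r\in L^0_{++}$, use stable total boundedness to choose a stable finite family $(f_\m)_{\1\le\m\le\n}\subset M$ with $M=\cup_\m B_{r/3}(f_\m)$, pick stable open neighbourhoods $V_\m$ of $x$ with $|f_\m(x)-f_\m(y)|\le r/3$ on $V_\m$ (by stable continuity of each $f_\m$), and set $V:=\cap_\m V_\m$, which is again a neighbourhood of $x$ since the family is stable finite. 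For arbitrary $f\in M$ one selects $f_\m$ with $d_\infty(f,f_\m)\le r/3$ and estimates, for $y\in V$,
\[
|f(x)-f(y)|\le |f(x)-f_\m(x)|+|f_\m(x)-f_\m(y)|+|f_\m(y)-f(y)|\le r,
\]
which is stable equicontinuity.

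For the reverse direction I would assume (i) and (ii) and verify the hypotheses of Lemma~\ref{lem: AA} with $S=M$ and $d=d_\infty$. Fix $r\in L^0_{++}$ and put $s_r:=r/3$. By stable equicontinuity every $x\in K$ admits a stable open neighbourhood $V_x$, taken from the stable base, with $|f(x)-f(y)|\le r/3$ for all $y\in V_x$ and all $f\in M$. The stable hull of $\{V_x:x\in K\}$ is then a stable collection of stable open sets covering $K$, so Proposition~\ref{thm: StCompactness}(iii) furnishes a stable finite subcover, i.e.\ a stable finite family $(x_\m)_{\1\le\m\le\n}$ with $\n\in L^0_s(\N)$ and $K=\cup_\m V_{x_\m}$, with the variation estimate preserved on each piece of the underlying partition. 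I would then define the stable sampling map
\[
\Phi_r\colon M\to (L^0)^{\n},\qquad \Phi_r(f):=(f(x_\m))_{\1\le\m\le\n},
\]
and endow $(L^0)^{\n}$ with the stable metric $d_r$ induced by $\|\cdot\|_\infty$; stability of evaluation and of coordinatewise concatenation makes $\Phi_r$ stable.

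It then remains to check the two requirements of Lemma~\ref{lem: AA}. First, $\Phi_r(M)\subset\prod_\m M_{x_\m}$, and since each $M_{x_\m}$ is stable bounded by (i), $\Phi_r(M)$ lies in a closed ball of the stable finitely generated module $(L^0)^{\n}$; such a ball is stable compact by Corollary~\ref{thm: HeineBorel}, hence stable totally bounded by Theorem~\ref{thm: GenHeineB}, so its subset $\Phi_r(M)$ is stable totally bounded. Second, if $d_r(\Phi_r(f),\Phi_r(g))=\max_\m|f(x_\m)-g(x_\m)|\le s_r$, then for any $y\in K$, choosing $\m$ with $y\in V_{x_\m}$ gives
\[
|f(y)-g(y)|\le |f(y)-f(x_\m)|+|f(x_\m)-g(x_\m)|+|g(x_\m)-g(y)|\le \tfrac{r}{3}+s_r+\tfrac{r}{3}=r,
\]
and since $K$ is stable compact the maximum defining $d_\infty$ is attained (Theorem~\ref{thm: lowSemCompact}), so $d_\infty(f,g)\le r$. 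Lemma~\ref{lem: AA} then delivers that $M$ is stable totally bounded, completing the proof. I expect the main obstacle to be the bookkeeping in extracting the \emph{stable} finite subcover $(x_\m)_{\1\le\m\le\n}$ from the equicontinuity neighbourhoods while keeping the $r/3$-variation estimate valid uniformly across the partition, together with arranging $\Phi_r$ to land in a stable finitely generated module so that the finite-dimensional Heine--Borel theorem can be invoked.
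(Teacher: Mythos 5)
Your proposal is correct and follows essentially the same route as the paper's proof: a stable finite net $(x_\m)$ obtained from stable compactness plus equicontinuity, the evaluation/sampling map into $((L^0)^{\n},\|\cdot\|_\infty)$, and Lemma~\ref{lem: AA} for one direction; a stable finite $d_\infty$-net combined with continuity of the net functions and a triangle inequality for the other. If anything, your write-up is slightly cleaner: you make explicit the step (implicit in the paper) that the stable bounded image in $(L^0)^{\n}$ is stable totally bounded via Corollary~\ref{thm: HeineBorel}, and you correctly take the \emph{intersection} of the stable finite family of neighbourhoods where the paper's text writes a union.
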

\begin{proof}
Suppose that $M$ is  stable  equicontinuous and $M_x$ is  stable  bounded for all $x\in K$.  
Using that  $K$ is  stable  compact and $M$ is  stable  equicontinuous, one can find a  stable  finite sequence $(x_\n)_{1\leq\n\leq\m}\subset K$, a stable  family $(V_\n)_{1\leq\n\leq\m}$, where $V_\n$ is a  neighbourhood  of $x_\n$ for all $1\leq\n\leq\m$, such that $K=\cup_{1\leq\n\leq\m} V_\n$ and $|f(x_\n)-f(y)|\leq r$ whenever $1\leq\n\leq\m$, $y\in V_\n$ and $f\in M$. 

Let $g:M\rightarrow (L^0)^\m$ be defined by $g(f):=(f(x_\n))_{1\leq\n\leq\m}$, which is a stable function. 
Note that $\Vert (r_\n)_{1\leq\n\leq\m}\Vert_\infty:=\underset{1\leq\n\leq\m}\max|r_\n|$ defines an $L^0$-norm on $(L^0)^\m$. 

Since $M_{x_\n}$ is  stable  bounded for each $x\in K$, we can choose $r_\n\in L^0_{++}$ such that $|f(x_\n)|<r_\n$ for all $f\in M$. Then, one has that  
$$\Vert g(f)\Vert_\infty:= \underset{1\leq\n\leq\m}{\max} |f(x_\n)|\leq \underset{1\leq\n\leq\m}\max r_\n<+\infty\quad\text{ for all }f\in M.$$
 
 This means that $g(M)$ is  stable  bounded in $((L^0)^\m,\Vert\cdot\Vert_\infty)$. 

Now, suppose that $f_1,f_2\in M$ with $\Vert g(f_1)-g(f_2) \Vert_\infty\leq r$ and $y\in K$. 
Since $K$ is covered by $(V_\n)_{1\leq\n\leq\m}$, we can pick $\n$, $1\leq\n\leq\m$, such that $y\in V_\n$. 
Then, one has
\[
|f_1(y)-f_2(y)|\leq |f_1(y)-f_1(x_\n)|+|f_1(x_\n)-f_2(x_\n)|+|f_2(x_\n) - f_2(z)|\leq 3 r.
\]

Since $y\in K$ is arbitrary, this implies that $d_\infty(f_1,f_2)\leq 3 r$. 
Lemma \ref{lem: AA} yields the result.

Conversely, suppose that $M\subset \mathscr{C}(K,L^0)$ is  stable  totally bounded. 
In particular, it is  stable  bounded and, therefore, $M_x$ is  stable  bounded for each $x\in K$. 

Let us show that $M$ is  stable  equicontinuous. 
Indeed, for fixed $x\in K$ and $r\in L^0_{++}$, since $M$ is  stable  totally bounded, one can pick a  stable  finite sequence $(f_\n)_{1\leq \n\leq \m}\subset M$ so that $M=\underset{1\leq \n\leq \m}\cup B_r(f_\n)$. 

For any $n\in\N$, we define $h_n:=1_{A_n}f_n$ 
where $a_n:=\vee\left\{b\in\mathcal{A} \colon 1_B n\leq 1_B\m\right\}$. 
For each $n\in\N$, since $h_n$ is continuous at $x$, we can take a  neighbourhood  $V_n$ of $x$ so that $|h_n(h)-h_n(y)|\leq r$ for all $y\in V_n$. 
Now, for any $\n\leq \m$, which is of the form $n=\sum n_k|b_k$, we define $V_\n=\sum 1_{B_k}V_{n_k}$. 
Then $(V_\n)_{1\leq\n\leq \m}$ is a stable family of  neighbourhood s of $x$ such that
\[
|h_\n(x)-h_\n(y)|=|f_\n(x)-f_\n(y)|\leq r\quad\text{ for all }y\in V_\n,\:\n\leq \m.
\] 
Let $V:=\underset{1\leq\n\leq \m}\cup V_\n$. 
If $f\in M$, one has that $f\in B_r(f_\n)$ for some $\n\leq \m$. 
Thus, for any $y\in V$ it holds
\[
|f(y)-f(x)|\leq |f(y)-f_\n(x)|+|f_\n(x)-f_\n(y)|+|f_\n(y)-f(x)|\leq 3 r.
\] 
This proves that $M$ is  stable  equicontinuous. 
\end{proof}

\end{document}